\numberwithin{equation}{section}
\newtheorem{theorem}{Theorem}[section]
\newtheorem{lemma}[theorem]{Lemma}
\theoremstyle{definition}
\theoremstyle{remark}
\newtheorem{remark}[theorem]{Remark}
\numberwithin{equation}{section}
\newcommand{\uu}{{\scriptstyle{\mathscr{U}}}}
\newcommand{\uuh}{\uu_{\!h}}
\newcommand{\vv}{{\scriptstyle{\mathscr{V}}}}
\newcommand{\ww}{{\scriptstyle{\mathscr{W}}}}
\newcommand{\RRR}{\mathbb{R}}
\title[First order least squares method for convection diffusion problems]
{First order least squares method with weakly imposed boundary condition for convection dominated diffusion problems}
\author{Huangxin Chen}
\address{School of Mathematical Sciences, Xiamen University,
Xiamen, 361005, People's Republic of China}
\email{chx@xmu.edu.cn}
\author{Guosheng Fu}
\address{School of Mathematics, University of Minnesota,
Minneapolis, MN 55455, USA}
\email{fuxxx165@math.umn.edu}
\author{Jingzhi Li}
\address{Department of Financial Mathematics and Financial Engineering,
 South University of Science and Technology of China,
Shenzhen, 518055, People's Republic of China}
\email{li.jz@sustc.edu.cn}
\author{Weifeng Qiu}
\address{Department of Mathematics, City University of Hong Kong,
83 Tat Chee Avenue, Kowloon, Hong Kong, China}
\email{weifeqiu@cityu.edu.hk}
\thanks{Corresponding author: Weifeng Qiu}
\keywords{error estimate; least squares method; DPG method; convection diffusion problems.}
\subjclass[2000]{65N30, 65L12, 35L15}
\begin{document}

\begin{abstract}
We present and analyze a first order least squares method for convection dominated diffusion problems, 
which provides robust $L^{2}$ a priori error estimate for the scalar variable even if the given data $f\in L^{2}(\Omega)$. 
The novel theoretical approach is to rewrite the method in the framework of discontinuous Petrov–-Galerkin (DPG) method, 
and then show numerical stability by using a key equation discovered by J. Gopalakrishnan and W. Qiu 
[{\em Math. Comp. 83(2014), pp. 537-552}]. 
This new approach gives an alternative way to do numerical analysis for least squares methods 
for a large class of differential equations. We also show that the condition number of 
the global matrix is independent of the diffusion coefficient.
A key feature of the method is that there is no stabilization parameter chosen empirically.
In addition, Dirichlet boundary condition is weakly imposed. 
Numerical experiments verify our theoretical results and, in particular, 
show our way of weakly imposing Dirichlet boundary condition is essential to the design of least squares methods - 
numerical solutions on subdomains away from interior layers or boundary layers have remarkable accuracy 
even on coarse meshes, which are unstructured quasi-uniform.
\end{abstract}

\maketitle


\section{Introduction}
In this paper, we present a robust a priori analysis of first order least squares method with weakly imposed boundary condition 
for the following convection dominated diffusion equation
\begin{subequations}
\label{cd_eqs}
\begin{align}
-\epsilon\Delta u + \boldsymbol{\beta}\cdot \nabla u + cu = &\; f  \quad \text{ in $\Omega$, }
\\
u = &\; g \quad \text{ on $\partial \Omega$, }
\end{align}
\end{subequations}
where $\Omega \in R^d$ ($d = 2,3$) is a polyhedral domain, $0<\epsilon\leq 1$, $c$ a function in $L^{\infty}(\Omega)$, 
$f$ a function in $L^2 (\Omega)$ and $g$ a function in $H^{1/2}(\partial \Omega)$. Here, the variable flux 
$\boldsymbol{\beta}$ satisfies the following assumption:
\begin{subequations}
\label{AS_beta}
\begin{align}
\label{AS1}
& \boldsymbol{\beta}\cdot \nabla\psi \geq b_{0}>0 \text{ in } \Omega,\text{ for some function }\psi\in W^{1,\infty}(\Omega),\\
\label{AS2}
& c-\frac{1}{2}\nabla\cdot \boldsymbol{\beta}\geq 0 \text{ in }\Omega.
\end{align}
\end{subequations}
According to \cite{AyusoMarini:cdf}, the assumption (\ref{AS1}) is satisfied if 
\begin{align*}
\boldsymbol{\beta} \text{ has no closed curves and } \vert \boldsymbol{\beta}(x)\vert\neq 0 \text{ for all }x\in\Omega.
\end{align*}

Least squares methods have been frequently used to simulate solutions of partial differential equations 
arising from fluid dynamics and continuum mechanics. We refer to \cite{Bochev:ls_book,Jiang:ls_book} for 
comprehensive summary. It is well known that least squares methods have the following desirable features:
it leads to a minimization problem; its numerical stability is {\em not} sensitive to the choice of finite element 
space or meshes; the resulting global stiffness matrices are symmetric and positive definite; a practical a posteriori 
error estimator can be given without any additional cost, and so on 
(see \cite{Bochev_stokes,Bochev_review,Cai_plate,Cai_fols1,Cai_newton,Cai_fols2,Cai_elas,Chang1,Chang2,Chang3,Deang1,Fiard1,Pehlivanov1}).

Unfortunately, primitive least squares methods for convection dominated diffusion problems (\ref{cd_eqs}) have the following drawbacks.
Firstly, if the term $c-\frac{1}{2}\nabla\cdot \boldsymbol{\beta}$ is {\em not} uniformly bounded from below by a positive constant, 
$L^{2}$ a priori error estimate of primitive least squares methods will deteriorate as the diffusion coefficient $\epsilon$ goes to zero, 
even when the exact solution has no interior layers or boundary layers (see error estimates in \cite{CaiWestphal:ls_div,HsiehYang:EnrichedLS,Lazarov1}). 
Secondly, primitive least squares methods show a very poor performance for convection diffusion problem (\ref{cd_eqs}) 
with a sufficiently small diffusion coefficient, because large spurious oscillations are observed 
(see numerical experiments in \cite{HsiehYang:EnrichedLS}).
We notice that in \cite{HsiehYang:EnrichedLS}, residual-free bubble strategy is used to address the second drawback. 
But, the least squares method in \cite{HsiehYang:EnrichedLS} needs to compute basis functions element-wisely, 
which is relatively not easy to implement.

It is well known that streamline diffusion method \cite{BrooksHughes}, residual free bubble methods 
\cite{BrezziHughesMariniRussoSuli1999,BrezziMariniSuli2000,BurmanErn09}, and DG methods \cite{AyusoMarini:cdf,HoustonSchwabSuli2002,HughesScovazziBochevBuffa2006,DiPietro08,Ern09} 
do not suffer from the above two drawbacks of primitive least squares methods. 
We refer to \cite{Roos08,Stynes:acta_cd} as comprehensive summaries of numerical methods suitable for 
convection dominated diffusion problems. We would like to emphasize that none of these numerical methods 
(streamline diffusion method, residual free bubble methods, DG methods) results in symmetric global stiffness matrices. 
Hence from the point of view of solver design, the least squares method is more attractive than the other methods mentioned 
before and many works have been contributed to this subject (e.g. \cite{Cai_fols2, Lee}). Moreover, we derive that 
the condition number of linear system from our first order least squares method is at most $\mathcal{O}(h^{-2})$, 
where $h$ is the mesh size. In particular, the condition number is independent of the diffusion coefficient. 
This property is important for designing efficient solver, e.g., multilevel method, for the 
first order least squares approximation of convection dominated diffusion equation.

In this paper, we propose and analyze our first order least squares method to address these two drawbacks for 
primitive least squares methods. In fact, it is difficult to provide robust $L^{2}$ error estimate by the traditional 
approach of numerical analysis for least squares methods in \cite{Bochev:ls_book}. So, it is necessary to look for 
an alternative approach. We notice that a weighted test function was used in \cite{JohnsonPitkaranta86} to obtain 
the $L^{2}$ stability of the original DG method \cite{ReedHill73} for the transportation reaction equation, and 
this idea was generalized to convection-diffusion-reaction equation in \cite{AyusoMarini:cdf} using the IP--DG methods. 
In this paper, we rewrite our method in the framework of discontinuous Petrov–-Galerkin (DPG) method, then show 
numerical stability by using a key equation discovered in \cite{GopaQiu:PracticalDPG}.
The advantage of this new approach is that the weight function in \cite{AyusoMarini:cdf} is shown to stay   
in some ``equivalent" test function space (see (\ref{abstract_key_equation}) in section~\ref{sec:abstract}) 
such that numerical stability can be obtained without using any projection as in \cite{AyusoMarini:cdf}.
This approach is novel and useful to numerical analysis of least squares methods for a large class of differential equations. 
This new approach of numerical analysis is also different from traditional ones used for DPG method 
in \cite{Tan:DPG_friedrich,DemkoGopal:2010:DPG1,DemkoGopal:2010:DPG2,DemkoGopal:DPGanl,DemkoGopalNiemi:2010:DPG3}.
We show that, roughly speaking, using polynomials of degree $k+1\geq 1$,
\begin{equation}
\label{main_result_introduction}
\Vert u_{h} - u \Vert_{L^{2}(\Omega)} + \epsilon^{1/2}\Vert \nabla (u - u_{h})\Vert_{L^{2}(\Omega)} \leq C h^{k+1} \Vert u\Vert_{H^{k+2}(\Omega)};
\end{equation}
if $\epsilon^{1/2}\leq h_{K}$ for any $K\in\mathcal{T}_{h}$, 
\begin{align*}
& \Vert u - u_{h}\Vert_{L^{2}(\Omega)} + \epsilon^{1/2}\Vert \nabla (u - u_{h})\Vert_{L^{2}(\Omega)}
+ \Vert \boldsymbol{\beta}\cdot \nabla (u - u_{h})\Vert_{L^{2}(\Omega)} \\
\nonumber
\leq & C h^{k+1} \Vert u\Vert_{H^{k+2}(\Omega)}.
\end{align*} 
Here, the constant $C$ is independent of $\epsilon$. Thus, we can conclude that a priori error estimate 
in (\ref{main_result_introduction}) is {\em robust} with respect to the diffusion coefficient $\epsilon$, 
which addresses the first draw back. We also want to emphasize that the convergence result 
(\ref{main_result_introduction}) shows our method has $L^{2}$ convergence rate even if $f\in L^{2}(\Omega)$, 
which means our method does not have excessive smoothness requirements than other methods.
In order to overcome the second drawback, we impose Dirichlet boundary condition in an weak way, such that 
the error along the boundary layers will not propagate into the whole domain. We show the advantage of imposing 
boundary condition weakly by numerical experiments. We notice that our way of imposing boundary condition 
is similar to the weak imposition of Dirichlet boundary condition in \cite{Burman05,Bazilevs2007,Schieweck}, 
which belongs to Nitsche's method in \cite{Nitsche71}.
However, we do {\em not} have to choose any penalty terms empirically while \cite{Bazilevs2007} needs. 
We would like to emphasize that weakly imposing boundary condition is essential to least squares methods while 
it is incrementally helpful to streamline diffusion method and DG methods (see numerical experiments in \cite{Bazilevs2007}). 
If boundary condition is imposed strongly, the numerical solutions produced by streamline diffusion method and DG methods 
may have artificial oscillation along boundary layers, while the accuracy in subdomains away from boundary layers is still remarkable. 
However, according to our numerical experiments, if we impose boundary condition strongly, then numerical solution of least squares methods will be polluted on almost the whole domain by boundary layers. We have tried to add several stabilization terms, 
which have been utilized by streamline diffusion method or DG methods,  into least squares methods to prevent propagation 
of error from boundary layers. None of them works except weakly imposing boundary condition. 

This paper is the first one which addresses the two drawbacks of least squares methods for convection dominated diffusion problems.
Now, we would like to compare with DPG methods, which can be considered as a special class of least squares methods. We notice 
that all DPG methods \cite{BroersenStevenson:mild_weak_cd,ChanHeuerTanDemkowicz:DPG_CD,DemkoHeuer:2013:DPG_cd} need to 
compute optimal test function space. on each element such that the implementation is more complicated than ours (Methods in \cite{Dahmen1,Dahmen2} are similar to DPG methods.) Next, we would like to compare our results with IP--DG method  \cite{AyusoMarini:cdf}. \cite{AyusoMarini:cdf} is the first paper which gives {\em robust} a priori error estimate for 
variable flux $\boldsymbol{\beta}$. However, IP--DG method \cite{AyusoMarini:cdf} has to choose stabilization parameter 
empirically while our method does not. In addition, IP--DG method \cite{AyusoMarini:cdf} can have {\em robust} $L^{2}$ 
convergence only when the mesh size $h< h_{0}$ where $h_{0}$ is a positive constant depending on $\boldsymbol{\beta}$ 
(see Theorem $4.4$ in \cite{AyusoMarini:cdf}). On the contrast, the convergence result of our method does not have this restriction.   

The remainder of this paper is organized as follows. In section $2$, we introduce our first order least squares method and 
the main theoretical results. In section $3$, we show a novel approach to do numerical analysis for least squares methods 
({\em not} restricted to our first order least squares method for convection dominated diffusion problems). In section $4$, 
we prove a priori error estimates by using the approach introduced in section $3$. In section $5$, we prove the estimate of 
condition number of global stiffness matrix provided by our method. In section $6$, we give numerical experiments which 
verify our theoretical results. In section $7$, we extend our first order least squares method for transportation reaction problems.

\section{First order least squares method and main theoretical result}
In this section, we present setting of meshes, first order least squares method and the main theoretical results.

\subsection{Setting of meshes}
Let $\mathcal{T}_h  =\{K\}$ be a conforming  triangulation of the domain $\Omega$ made of shape-regular simplexes $K$.
For each element $K\in\mathcal{T}_h$, we set $h_K := |K|^{\frac{1}{d}}$ and for each
of its faces $e$, $h_F := |F|^{\frac{1}{d-1}}$, where $|\cdot|$ denotes the Lebesgue measure in $d$ or $d -1$ dimensions.
We associate to $\mathcal{T}_h$ the set of faces $\mathcal{E}_h$ as well as those
of interior faces $\mathcal{E}^i_h$ and boundary faces
$\mathcal{E}_h^{\partial}$. 
We say that $F\in \mathcal{E}^i_h$ if there are two elements $K^+$ and
$K^-$ in $\mathcal{T}_h$ such that $F=\partial K^+ \cap \partial K^-$, and we say that $F\in \mathcal{E}_h^{\partial}$
if there is an element $K$ in $\mathcal{T}_h$ such that $F=\partial K \cap
\partial \Omega$. It is obvious that 
$\mathcal{E}_h = \mathcal{E}_h^i \cup \mathcal{E}_h^{\partial}$.

\subsection{First order least squares method}
We define $\boldsymbol{q} = -\epsilon^{1/2} \nabla u$. 
We can rewrite (\ref{cd_eqs}) as the following first-order equations:
\begin{subequations}
\begin{align}
\label{cd_first_order1}
\boldsymbol{q} + \epsilon^{1/2}\nabla u & = 0\text{ in }\Omega,\\
\label{cd_first_order2}
\epsilon^{1/2}\nabla\cdot \boldsymbol{q} + \boldsymbol{\beta}\cdot \nabla u + cu & = f \text{ in }\Omega,\\
\label{cd_first_order3}
u & = g \text{ on }\partial\Omega.
\end{align}
\end{subequations}

We define the finite element space $\mathcal{U}_{h} = \boldsymbol{Q}_{h}\times W_{h}$, where
\begin{subequations}
\label{fem_space}
\begin{align}
\boldsymbol{Q}_{h} & = \{ \boldsymbol{p}\in H(\text{div},\Omega): 
\boldsymbol{p}|_{K}\in P_{k+1}(K;\mathbb{R}^{d})+\boldsymbol{x} P_{k+1}(K),\quad\forall K\in\mathcal{T}_{h}\}, \\
W_{h} & = \{w|_{K}\in H^{1}(\Omega): w\in P_{k+1}(K),\quad\forall K\in\mathcal{T}_{h} \}.
\end{align}
\end{subequations}
Here, $P_{k}(D)$ is the space of polynomials on the domain $D$ of total degree at most $k$, a non-negative integer.
Obviously, there is a positive constant $C$,
\begin{subequations}
\label{fem_space_assumps}
\begin{align}
\label{fem_space_assump1}
\Vert \boldsymbol{p}\cdot \boldsymbol{n}\Vert_{\partial K\cap \partial\Omega} & \leq C h_{K}^{-1/2} \Vert \boldsymbol{p}\Vert_{K},
\quad \forall \boldsymbol{p}\in \boldsymbol{Q}_{h}, K\in \mathcal{T}_{h},\\
\label{fem_space_assump2}
\Vert \nabla\cdot \boldsymbol{p}\Vert_{K} & \leq C h_{K}^{-1}\Vert \boldsymbol{p}\Vert_{K},\quad \forall \boldsymbol{p}\in 
\boldsymbol{Q}_{h}, K\in \mathcal{T}_{h}.
\end{align}
\end{subequations}

The first order least squares method is to find $(\boldsymbol{q}_{h},u_{h})\in \mathcal{U}_{h}$ satisfying
\begin{align}
\label{ls_formulation}
& \left( \boldsymbol{q}_{h}+\epsilon^{1/2}\nabla u_{h}, \boldsymbol{p}+\epsilon^{1/2}\nabla w\right)_{\Omega} \\
\nonumber
& \quad+ \left( \epsilon^{1/2}\nabla\cdot \boldsymbol{q}_{h} + \boldsymbol{\beta}\cdot \nabla u_{h} + c u_{h}, 
\epsilon^{1/2}\nabla\cdot \boldsymbol{p} + \boldsymbol{\beta}\cdot \nabla w + c w \right)_{\Omega}\\
\nonumber
& \quad +\Sigma_{F\in \mathcal{E}_h^{\partial}} h_{F}^{-1}\langle \left(\epsilon
+\max (-\boldsymbol{\beta}\cdot \boldsymbol{n}(x),0)\right)u_{h}, w\rangle_{F}\\
\nonumber
 = & (f, \epsilon^{1/2}\nabla\cdot \boldsymbol{p} + \boldsymbol{\beta}\cdot \nabla w + c w)_{\Omega}\\
 \nonumber
&\quad +\Sigma_{F\in \mathcal{E}_h^{\partial}} h_{F}^{-1}\langle \left(\epsilon
+\max (-\boldsymbol{\beta}\cdot \boldsymbol{n}(x),0)\right) g, w\rangle_{F},
\quad \forall (\boldsymbol{p},w)\in \mathcal{U}_{h}.
\end{align}
In (\ref{ls_formulation}), the inner products are
defined in the following natural manner:
\begin{align*}
(u,v)_{\Omega} = \int_{\Omega}uv dx\text{ and }\langle u, v\rangle_{F} = \int_{F} u v ds,\quad \forall F\in \mathcal{E}_{h},
\end{align*}
with the obvious modifications for vector-valued functions.

\begin{remark}
Notice that the boundary condition imposed on outflow boundary 
($\Gamma^{+}=\{x\in\partial\Omega:\boldsymbol{\beta}\cdot\boldsymbol{n}(x)>0\}$) is $O(\epsilon)$.
This is the reason we say {\em Dirichlet boundary condition is weakly imposed}.
If we ignore the weight function used in \cite{CaiWestphal:ls_div}, the first order least squares method in \cite{CaiWestphal:ls_div} 
is to find $(\boldsymbol{q}_{h},u_{h})\in \mathcal{Q}_{h}\times W_{g,h}$ satisfying
\begin{align}
\label{ls_formulation_strong}
& \left( \boldsymbol{q}_{h}+\epsilon^{1/2}\nabla u_{h}, \boldsymbol{p}+\epsilon^{1/2}\nabla w\right)_{\Omega} \\
\nonumber
& \quad+ \left( \epsilon^{1/2}\nabla\cdot \boldsymbol{q}_{h} + \boldsymbol{\beta}\cdot \nabla u_{h} + c u_{h}, 
\epsilon^{1/2}\nabla\cdot \boldsymbol{p} + \boldsymbol{\beta}\cdot \nabla w + c w \right)_{\Omega}\\
\nonumber
 = & (f, \epsilon^{1/2}\nabla\cdot \boldsymbol{p} + \boldsymbol{\beta}\cdot \nabla w + c w)_{\Omega},
\quad \forall (\boldsymbol{p},w)\in \mathcal{Q}_{h}\times W_{0,h},
\end{align}
where $W_{g,h}=\{w\in W_{h}: w|_{\partial \Omega} = g\}$ and $W_{0,h}=\{w\in W_{h}: w|_{\partial \Omega} = 0\}$. 
The main difference between (\ref{ls_formulation}) and (\ref{ls_formulation_strong}) is that in (\ref{ls_formulation_strong}), 
Dirichlet boundary condition is imposed strongly while 
our method (\ref{ls_formulation}) uses weakly imposed boundary condition.
Our convergence analysis in section~\ref{sec:convergence} is also valid for least squares method (\ref{ls_formulation_strong}).
But, if the solution of (\ref{cd_eqs}) has boundary layers or interior layers, then our way of weakly imposing boundary 
condition in (\ref{ls_formulation}) can improve the accuracy of numerical solution in subdomains away from layers 
dramatically. We refer to section $5$ for detailed description.
\end{remark}

\begin{remark}
In (\ref{ls_formulation}), we weakly imposed the Dirichlet boundary condition by 
\begin{align*}
\Sigma_{F\in \mathcal{E}_h^{\partial}} h_{F}^{-1}\langle \left(\epsilon
+\max (-\boldsymbol{\beta}\cdot \boldsymbol{n}(x),0)\right)u_{h}, w\rangle_{F}
= \Sigma_{F\in \mathcal{E}_h^{\partial}} h_{F}^{-1}\langle \left(\epsilon
+\max (-\boldsymbol{\beta}\cdot \boldsymbol{n}(x),0)\right) g, w\rangle_{F}.
\end{align*}
In fact, we can also weakly imposed the Dirichlet boundary condition as
\begin{align*}
\Sigma_{F\in \mathcal{E}_h^{\partial}} \langle \left( h_{F}^{-1} \epsilon
+\max (-\boldsymbol{\beta}\cdot \boldsymbol{n}(x),0)\right)u_{h}, w\rangle_{F}
= \Sigma_{F\in \mathcal{E}_h^{\partial}} \langle \left( h_{F}^{-1} \epsilon
+\max (-\boldsymbol{\beta}\cdot \boldsymbol{n}(x),0)\right) g, w\rangle_{F}.
\end{align*}
Then, the first order least squares method is to find $(\boldsymbol{q}_{h},u_{h})\in \mathcal{U}_{h}$ satisfying
\begin{align}
\label{ls_formulation_alt}
& \left( \boldsymbol{q}_{h}+\epsilon^{1/2}\nabla u_{h}, \boldsymbol{p}+\epsilon^{1/2}\nabla w\right)_{\Omega} \\
\nonumber
& \quad+ \left( \epsilon^{1/2}\nabla\cdot \boldsymbol{q}_{h} + \boldsymbol{\beta}\cdot \nabla u_{h} + c u_{h}, 
\epsilon^{1/2}\nabla\cdot \boldsymbol{p} + \boldsymbol{\beta}\cdot \nabla w + c w \right)_{\Omega}\\
\nonumber
& \quad +\Sigma_{F\in \mathcal{E}_h^{\partial}} \langle \left(h_{F}^{-1}\epsilon
+\max (-\boldsymbol{\beta}\cdot \boldsymbol{n}(x),0)\right)u_{h}, w\rangle_{F}\\
\nonumber
 = & (f, \epsilon^{1/2}\nabla\cdot \boldsymbol{p} + \boldsymbol{\beta}\cdot \nabla w + c w)_{\Omega}\\
 \nonumber
&\quad +\Sigma_{F\in \mathcal{E}_h^{\partial}} \langle \left(h_{F}^{-1}\epsilon
+\max (-\boldsymbol{\beta}\cdot \boldsymbol{n}(x),0)\right) g, w\rangle_{F},
\quad \forall (\boldsymbol{p},w)\in \mathcal{U}_{h}.
\end{align}
We can still obtain the stability estimate similar to Lemma~\ref{lemma_numerical_stability}. 
However, the $L^{2}$ convergence rate of (\ref{ls_formulation_alt}) is the same as that of (\ref{ls_formulation}), 
since the approximation to $\boldsymbol{\beta}\cdot \nabla u$ in $L^{2}$-norm gets involved with the error analysis 
(see the proof of Theorem~\ref{MainTh1}).
\end{remark}

\subsection{Main theoretical results}
We state our main theoretical results on a priori error analysis and condition number estimation of the first order least squares method
 as the following Theorems.

We denote by $C$ a positive constant, which is independent of $\epsilon$ and $h$. We assume the assumptions (\ref{AS_beta}) 
on $\boldsymbol{\beta}$ and $c$ hold.
\begin{theorem}
\label{MainTh1}
\begin{align}
\label{convergence_rate_global}
\Vert u - u_{h}\Vert_{L^{2}(\Omega)}+\epsilon^{1/2}\Vert \nabla (u - u_{h})\Vert_{L^{2}(\Omega)} 
\leq C h^{k}(\epsilon + h) \Vert u\Vert_{H^{k+2}(\Omega)}.
\end{align}
In addition, if we further assume $\boldsymbol{\beta}\in W^{2,\infty}(\mathcal{T}_{h})$ and $c\in W^{1,\infty}(\mathcal{T}_{h})$,
\begin{align}
\label{convergence_rate_global_refined}
\Vert u - u_{h}\Vert_{L^{2}(\Omega)}+\epsilon^{1/2}\Vert \nabla (u - u_{h})\Vert_{L^{2}(\Omega)} 
\leq C h^{k+1} \Vert u\Vert_{H^{k+2}(\Omega)}.
\end{align}
\end{theorem}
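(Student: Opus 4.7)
The plan is to prove Theorem~\ref{MainTh1} through classical least-squares machinery: establish a discrete stability/coercivity estimate in a suitable norm, then combine it with approximation properties of the finite element spaces $\boldsymbol{Q}_h$ and $W_h$.

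First, I would establish a numerical stability lemma (anticipated as Lemma~\ref{lemma_numerical_stability}) controlling $\Vert u_h\Vert_{L^2(\Omega)} + \epsilon^{1/2}\Vert\nabla u_h\Vert_{L^2(\Omega)}$ by the natural least-squares energy of the right-hand side of (\ref{ls_formulation}). This is the novel ingredient of the paper: one recasts (\ref{ls_formulation}) as a DPG method and invokes the Gopalakrishnan--Qiu key equation to produce an explicit weighted test function (in the spirit of the Ayuso--Marini weight $\psi$ of (\ref{AS1})), which upon testing yields $L^2$-control of $u_h$ without any projection step. Since the bilinear form in (\ref{ls_formulation}) coincides with its own energy, this immediately yields a Céa-type best approximation principle in the least-squares norm.

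Next, I would use Galerkin orthogonality. The exact solution of (\ref{cd_first_order1})--(\ref{cd_first_order3}) satisfies (\ref{ls_formulation}) for every test pair, so
\begin{equation*}
B\bigl((\boldsymbol{q}-\boldsymbol{q}_h, u-u_h),(\boldsymbol{p},w)\bigr) = 0 \qquad \forall (\boldsymbol{p},w)\in\mathcal{U}_h.
\end{equation*}
Let $\Pi_Q\boldsymbol{q}\in\boldsymbol{Q}_h$ be the Raviart--Thomas interpolant (enjoying the commuting identity $\nabla\cdot\Pi_Q\boldsymbol{q} = P_h(\nabla\cdot\boldsymbol{q})$, with $P_h$ the $L^2$-projection onto piecewise $P_{k+1}$) and $\Pi_W u\in W_h$ the Scott--Zhang interpolant. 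Applying the stability estimate to the discrete difference $(\Pi_Q\boldsymbol{q}-\boldsymbol{q}_h,\Pi_W u-u_h)$ and using orthogonality reduces the task to bounding the least-squares norm of the interpolation pair $(\Pi_Q\boldsymbol{q}-\boldsymbol{q},\Pi_W u-u)$, plus the weighted boundary penalty on $\partial\Omega$.

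The component-wise estimates are then routine: the flux residual $\Vert\boldsymbol{q}+\epsilon^{1/2}\nabla u-\Pi_Q\boldsymbol{q}-\epsilon^{1/2}\nabla\Pi_W u\Vert$ is $\lesssim \epsilon^{1/2}h^{k+1}\Vert u\Vert_{H^{k+2}}$ (using $\boldsymbol{q}=-\epsilon^{1/2}\nabla u$); the equation residual splits into $\epsilon^{1/2}\nabla\cdot(\Pi_Q\boldsymbol{q}-\boldsymbol{q})$, which via the commuting property and $\nabla\cdot\boldsymbol{q}=-\epsilon^{1/2}\Delta u$ yields $\epsilon h^k\Vert u\Vert_{H^{k+2}}$, together with $\boldsymbol{\beta}\cdot\nabla(\Pi_W u-u)+c(\Pi_W u-u)$, which yields $h^{k+1}\Vert u\Vert_{H^{k+2}}$; and the boundary penalty is handled by a standard trace inequality on faces combined with approximation in $W_h$. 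Summing gives the $h^k(\epsilon+h)$ bound (\ref{convergence_rate_global}).

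The main obstacle is the refined estimate (\ref{convergence_rate_global_refined}): the $\epsilon h^k$ contribution from bounding $\epsilon^{1/2}\nabla\cdot(\Pi_Q\boldsymbol{q}-\boldsymbol{q})$ in isolation must be absorbed into an $h^{k+1}$ term. I expect the argument exploits the extra smoothness $\boldsymbol{\beta}\in W^{2,\infty}(\mathcal{T}_h)$ and $c\in W^{1,\infty}(\mathcal{T}_h)$ to perform an elementwise integration by parts on $\boldsymbol{\beta}\cdot\nabla w+cw$, transferring a derivative off $u-\Pi_W u$. This yields a duality-style estimate in which the residual is paired against the test function after one integration by parts, so the divergence term on $\Pi_Q\boldsymbol{q}-\boldsymbol{q}$ is replaced by the flux error itself (which is one power of $h$ better), recovering the full $h^{k+1}$ rate. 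This is precisely the step where the DPG reformulation of Section~\ref{sec:abstract} is decisive, since the equivalent test space guaranteed by (\ref{abstract_key_equation}) provides test functions smooth enough for the integration by parts to be justified uniformly in $\epsilon$.
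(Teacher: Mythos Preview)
Your approach to the basic estimate~(\ref{convergence_rate_global}) is essentially the paper's: rewrite the method as a DPG scheme, invoke Lemma~\ref{lemma_numerical_stability} on the discrete error $(\Pi_{\boldsymbol Q}\boldsymbol q-\boldsymbol q_h,\Pi_W u-u_h)$, use Galerkin orthogonality to replace it by the interpolation error, and bound $\|(\Pi_{\boldsymbol Q}\boldsymbol q-\boldsymbol q,\Pi_W u-u)\|_{\mathcal U}$ term by term. Your component estimates are the right ones.

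The gap is in your mechanism for the refined bound~(\ref{convergence_rate_global_refined}). You correctly locate the obstruction in the term $\epsilon^{1/2}\nabla\!\cdot(\Pi_{\boldsymbol Q}\boldsymbol q-\boldsymbol q)$, but the proposed cure---an integration by parts on $\boldsymbol\beta\cdot\nabla w+cw$ made possible by ``smooth'' DPG test functions---does not work and misreads the framework. The equivalent test space in~(\ref{abstract_key_equation}) is $\mathcal V=L^2(\Omega;\mathbb R^d)\times L^2(\Omega)\times L^2(\partial\Omega)$, which carries no extra regularity whatsoever; nothing in the DPG reformulation produces test functions on which you could integrate by parts. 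The paper does \emph{not} use the DPG identity for this step at all.

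What the paper actually does is an orthogonality argument based on the commuting property you already quoted, $\nabla\!\cdot\Pi_{\boldsymbol Q}\boldsymbol q=P_{k+1,h}(\nabla\!\cdot\boldsymbol q)$. One tests the error equation with the discrete error $(e_{\boldsymbol q},e_u)$ itself (via Lemma~\ref{lemma_norm_equivalent}), so the dangerous term becomes
\[
\epsilon^{1/2}\bigl(P_{k+1,h}(\nabla\!\cdot\boldsymbol q)-\nabla\!\cdot\boldsymbol q,\;\epsilon^{1/2}\nabla\!\cdot e_{\boldsymbol q}+\boldsymbol\beta\cdot\nabla e_u+ce_u\bigr)_{\Omega}.
\]
Since $\nabla\!\cdot e_{\boldsymbol q}\in P_{k+1}(\mathcal T_h)$, the first pairing vanishes. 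For the remaining two, replace $\boldsymbol\beta$ by its elementwise $P_1$ projection and $c$ by its elementwise $P_0$ projection; then $(\boldsymbol P_{1,h}\boldsymbol\beta)\cdot\nabla e_u$ and $(P_{0,h}c)e_u$ are again in $P_{k+1}(\mathcal T_h)$ and drop out by orthogonality. The leftover involves $(\boldsymbol\beta-\boldsymbol P_{1,h}\boldsymbol\beta)\cdot\nabla e_u$ and $(c-P_{0,h}c)e_u$, which is precisely where the hypotheses $\boldsymbol\beta\in W^{2,\infty}(\mathcal T_h)$ and $c\in W^{1,\infty}(\mathcal T_h)$ enter: they give $\|\boldsymbol\beta-\boldsymbol P_{1,h}\boldsymbol\beta\|_{L^\infty(K)}\lesssim h_K^2$ and $\|c-P_{0,h}c\|_{L^\infty(K)}\lesssim h_K$, and an inverse inequality on $e_u$ then recovers the missing factor of $h$. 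No integration by parts, no smooth test functions.
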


\begin{theorem}
\label{MainTh2}
If $0 < \epsilon^{1/2}\leq h_{K}$ for any $K\in\mathcal{T}_{h}$,
\begin{align}
\label{convergence_rate_global_beta}
& \Vert u - u_{h}\Vert_{L^{2}(\Omega)} + \epsilon^{1/2}\Vert \nabla (u - u_{h})\Vert_{L^{2}(\Omega)}
+ \Vert \boldsymbol{\beta}\cdot \nabla (u - u_{h})\Vert_{L^{2}(\Omega)} \\
\nonumber
\leq & C h^{k}(\epsilon + h) \Vert u\Vert_{H^{k+2}(\Omega)}.
\end{align}
In addition, if we further assume $\boldsymbol{\beta}\in W^{2,\infty}(\mathcal{T}_{h})$ and $c\in W^{1,\infty}(\mathcal{T}_{h})$,
\begin{align}
\label{convergence_rate_global_beta_refined}
& \Vert u - u_{h}\Vert_{L^{2}(\Omega)} + \epsilon^{1/2}\Vert \nabla (u - u_{h})\Vert_{L^{2}(\Omega)}
+ \Vert \boldsymbol{\beta}\cdot \nabla (u - u_{h})\Vert_{L^{2}(\Omega)} \\
\nonumber
\leq & C h^{k+1} \Vert u\Vert_{H^{k+2}(\Omega)}.
\end{align}
\end{theorem}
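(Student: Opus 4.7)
The plan is to reduce Theorem~\ref{MainTh2} to Theorem~\ref{MainTh1} by extracting the extra streamline estimate directly from the equation, using the assumption $\epsilon^{1/2}\le h_{K}$ to absorb an inverse estimate. The key observation is that the second residual $R_{2}:=\epsilon^{1/2}\nabla\cdot\boldsymbol{q}_{h}+\boldsymbol{\beta}\cdot\nabla u_{h}+cu_{h}-f$ and the first residual $R_{1}:=\boldsymbol{q}_{h}+\epsilon^{1/2}\nabla u_{h}$ are both controlled in $L^{2}$ by the least-squares functional that appears inside the proof of Theorem~\ref{MainTh1}. Since $\boldsymbol{\beta}\cdot\nabla u+cu+\epsilon^{1/2}\nabla\cdot\boldsymbol{q}=f$ pointwise for the exact solution, subtracting from the definition of $R_{2}$ yields the pointwise identity
\begin{align*}
\boldsymbol{\beta}\cdot\nabla(u-u_{h})= -R_{2}-c(u-u_{h})-\epsilon^{1/2}\nabla\cdot(\boldsymbol{q}-\boldsymbol{q}_{h}).
\end{align*}
Taking $L^{2}$ norms reduces everything to: (i) the $L^{2}$ bound on $u-u_{h}$ from Theorem~\ref{MainTh1}, (ii) the $L^{2}$ bound on $R_{2}$ from the least-squares energy, and (iii) a bound on $\epsilon^{1/2}\nabla\cdot(\boldsymbol{q}-\boldsymbol{q}_{h})$.

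For (iii), I would insert a well-chosen projection $\Pi\boldsymbol{q}\in\boldsymbol{Q}_{h}$ (for instance the canonical interpolant associated with the space in (\ref{fem_space})) and split
\begin{align*}
\epsilon^{1/2}\|\nabla\cdot(\boldsymbol{q}-\boldsymbol{q}_{h})\|_{K}\le \epsilon^{1/2}\|\nabla\cdot(\boldsymbol{q}-\Pi\boldsymbol{q})\|_{K}+\epsilon^{1/2}\|\nabla\cdot(\Pi\boldsymbol{q}-\boldsymbol{q}_{h})\|_{K}.
\end{align*}
The first piece is handled by standard approximation together with $\boldsymbol{q}=-\epsilon^{1/2}\nabla u$. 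For the second, since $\Pi\boldsymbol{q}-\boldsymbol{q}_{h}\in\boldsymbol{Q}_{h}$, the inverse estimate (\ref{fem_space_assump2}) gives $\|\nabla\cdot(\Pi\boldsymbol{q}-\boldsymbol{q}_{h})\|_{K}\le Ch_{K}^{-1}\|\Pi\boldsymbol{q}-\boldsymbol{q}_{h}\|_{K}$, so that
\begin{align*}
\epsilon^{1/2}\|\nabla\cdot(\Pi\boldsymbol{q}-\boldsymbol{q}_{h})\|_{K}\le C\,(\epsilon^{1/2}h_{K}^{-1})\|\Pi\boldsymbol{q}-\boldsymbol{q}_{h}\|_{K}\le C\|\Pi\boldsymbol{q}-\boldsymbol{q}_{h}\|_{K},
\end{align*}
where the hypothesis $\epsilon^{1/2}\le h_{K}$ is used precisely to kill the otherwise unbounded factor $\epsilon^{1/2}h_{K}^{-1}$. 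The quantity $\|\Pi\boldsymbol{q}-\boldsymbol{q}_{h}\|_{K}$ is then bounded by the triangle inequality in terms of $\|\boldsymbol{q}-\Pi\boldsymbol{q}\|_{K}$ (approximation) and $\|\boldsymbol{q}-\boldsymbol{q}_{h}\|_{K}$, and the latter is recovered from the identity $\boldsymbol{q}-\boldsymbol{q}_{h}=-\epsilon^{1/2}\nabla(u-u_{h})-R_{1}$, which only uses quantities already controlled in Theorem~\ref{MainTh1} and the least-squares functional.

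Summing over $K$ and combining (i)--(iii) yields the bound on $\|\boldsymbol{\beta}\cdot\nabla(u-u_{h})\|_{L^{2}(\Omega)}$ with the same rate as in Theorem~\ref{MainTh1}, which proves (\ref{convergence_rate_global_beta}) and, under the additional regularity on $\boldsymbol{\beta},c$, also (\ref{convergence_rate_global_beta_refined}). The main technical point to be careful with is ensuring that the least-squares energy bound extracted inside the proof of Theorem~\ref{MainTh1} is sharp enough to give the correct $h^{k+1}$ (respectively $h^{k}(\epsilon+h)$) rate for both residuals $R_{1}$ and $R_{2}$; once this is in hand, the only genuinely new ingredient is the elementwise inverse estimate applied under the regime $\epsilon^{1/2}\le h_{K}$, which is the pivotal step of the argument.
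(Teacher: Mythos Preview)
Your proposal is correct and essentially coincides with the paper's argument. The paper does not write out a separate proof of Theorem~\ref{MainTh2}; it simply points to the strengthened stability estimate \eqref{numerical_stability2} in Lemma~\ref{lemma_numerical_stability}, whose proof is exactly the inverse-inequality step you isolate: for $e_{\boldsymbol q}=\Pi_{\boldsymbol Q}\boldsymbol q-\boldsymbol q_{h}\in\boldsymbol Q_{h}$ one has $\epsilon^{1/2}\Vert\nabla\cdot e_{\boldsymbol q}\Vert_{K}\le C(\epsilon^{1/2}h_{K}^{-1})\Vert e_{\boldsymbol q}\Vert_{K}\le C\Vert e_{\boldsymbol q}\Vert_{K}$ under $\epsilon^{1/2}\le h_{K}$, which frees $\Vert\boldsymbol\beta\cdot\nabla e_{u}\Vert$ from $\Vert\epsilon^{1/2}\nabla\cdot e_{\boldsymbol q}+\boldsymbol\beta\cdot\nabla e_{u}\Vert$ and then the proof of Theorem~\ref{MainTh1} is repeated verbatim with the enlarged left-hand side. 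The only organizational difference is that the paper folds this into the stability lemma and bounds $\Vert e_{\boldsymbol q}\Vert$ directly from the $\mathcal U$-norm, whereas you recover $\Vert\boldsymbol q-\boldsymbol q_{h}\Vert$ through $R_{1}$; both routes are equivalent, and the pivotal step is the same.
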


\begin{theorem}
\label{MainTh3}
We denote by $\kappa$  the condition number of global stiffness matrix of first order least squares method 
(\ref{ls_formulation}). If we assume the meshes are quasi-uniform, then
\begin{align}
\kappa \leq C h^{-2}.
\end{align} 
\end{theorem}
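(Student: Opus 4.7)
The plan is to estimate $\lambda_{\max}$ and $\lambda_{\min}$ of the symmetric positive definite stiffness matrix $\mathbb{A}$ separately and then use $\kappa = \lambda_{\max}/\lambda_{\min}$. Write $a(\cdot,\cdot)$ for the symmetric bilinear form appearing on the left-hand side of (\ref{ls_formulation}), and for $V_h=(\boldsymbol{p},w)\in\mathcal{U}_h$ let $\mathbf{v}$ denote the vector of its coefficients in a nodal basis. Since the meshes are quasi-uniform of size $h$, a standard scaling argument gives
\begin{align*}
c_1 h^d \|\mathbf{v}\|_{\ell^2}^2 \;\le\; \|\boldsymbol{p}\|_{L^2(\Omega)}^2 + \|w\|_{L^2(\Omega)}^2 \;\le\; c_2 h^d \|\mathbf{v}\|_{\ell^2}^2,
\end{align*}
with $c_1,c_2>0$ independent of $h$, and Rayleigh quotients yield
\begin{align*}
\lambda_{\max} = \sup_{V_h\in\mathcal{U}_h}\frac{a(V_h,V_h)}{\|\mathbf{v}\|_{\ell^2}^2},\qquad \lambda_{\min} = \inf_{V_h\in\mathcal{U}_h}\frac{a(V_h,V_h)}{\|\mathbf{v}\|_{\ell^2}^2}.
\end{align*}

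For the upper bound on $\lambda_{\max}$, I would expand $a(V_h,V_h)$ term by term, apply the triangle inequality to split each square, and then bound every piece by $h^{-2}$ times an $L^2$ quantity. The inverse inequalities (\ref{fem_space_assumps}) give $\|\nabla\cdot\boldsymbol{p}\|_K\le C h_K^{-1}\|\boldsymbol{p}\|_K$; the standard polynomial inverse inequality gives $\|\nabla w\|_K\le C h_K^{-1}\|w\|_K$, whence $\|\boldsymbol{\beta}\cdot\nabla w\|_K\le C h_K^{-1}\|w\|_K$; and the discrete trace inequality combined with (\ref{fem_space_assump1}) gives $h_F^{-1}\|w\|_F^2\le C h_K^{-2}\|w\|_K^2$ on boundary faces. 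Since $\epsilon\le 1$, $c\in L^\infty$, and $\boldsymbol{\beta}\in L^\infty$, assembling these estimates yields
\begin{align*}
a(V_h,V_h)\;\le\; C h^{-2}\bigl(\|\boldsymbol{p}\|_{L^2(\Omega)}^2+\|w\|_{L^2(\Omega)}^2\bigr)\;\le\; C h^{d-2}\|\mathbf{v}\|_{\ell^2}^2,
\end{align*}
so $\lambda_{\max}\le C h^{d-2}$.

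For the lower bound on $\lambda_{\min}$, I would invoke the numerical stability result (Lemma~\ref{lemma_numerical_stability}) established in Section~\ref{sec:convergence} via the DPG reformulation and the key identity from \cite{GopaQiu:PracticalDPG}. That lemma, together with assumption (\ref{AS_beta}), provides the $\epsilon$-robust $L^2$-coercivity
\begin{align*}
\|\boldsymbol{p}\|_{L^2(\Omega)}^2+\|w\|_{L^2(\Omega)}^2 \;\le\; C\, a(V_h,V_h),\qquad \forall V_h=(\boldsymbol{p},w)\in\mathcal{U}_h,
\end{align*}
and the lower norm-equivalence then gives $\lambda_{\min}\ge C h^d$. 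Dividing, $\kappa\le Ch^{-2}$.

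The main obstacle is extracting the uniform $L^2$-coercivity used in the lower bound from the stability analysis; the upper bound is routine once the right inverse and trace inequalities are in place. In particular, one must verify that the stability lemma controls $\|\boldsymbol{q}_h\|_{L^2}$ in addition to $\|u_h\|_{L^2}$, which requires combining the control of the residual $\|\boldsymbol{q}_h+\epsilon^{1/2}\nabla u_h\|_{L^2}$ coming directly from $a(V_h,V_h)$ with an $\epsilon$-independent bound on $\epsilon^{1/2}\|\nabla u_h\|_{L^2}$; the weakly imposed boundary term and assumption (\ref{AS1}) are what permit this to be done uniformly in $\epsilon$.
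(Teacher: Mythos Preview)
Your proposal is correct and follows essentially the same route as the paper: bound $a(V_h,V_h)$ above by $Ch^{-2}(\|\boldsymbol{p}\|_{L^2}^2+\|w\|_{L^2}^2)$ via inverse and trace inequalities, and below by $C(\|\boldsymbol{p}\|_{L^2}^2+\|w\|_{L^2}^2)$ via the stability analysis of Section~\ref{sec:convergence}. The only cosmetic difference is that the paper invokes Lemma~\ref{lemma_norm_equivalent} rather than Lemma~\ref{lemma_numerical_stability}; since $\|\cdot\|_{\mathcal{U}}$ already contains $\|\boldsymbol{p}\|_{L^2(\Omega)}^2+\|w\|_{L^2(\Omega)}^2$ as summands, the lower bound in Lemma~\ref{lemma_norm_equivalent} gives the $L^2$-coercivity you need in one step, so your closing worry about separately recovering $\|\boldsymbol{q}_h\|_{L^2}$ is unnecessary.
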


\begin{remark}
In Theorem~\ref{MainTh2}, we obtain optimal convergence of $\Vert u - u_{h}\Vert_{L^{2}(\Omega)} 
+ \epsilon^{1/2}\Vert \nabla (u - u_{h})\Vert_{L^{2}(\Omega)}+ \Vert \boldsymbol{\beta}\cdot \nabla (u - u_{h})\Vert_{L^{2}(\Omega)}$ 
if $0 < \epsilon^{1/2}\leq h_{K}$ for any $K\in\mathcal{T}_{h}$. The restriction on mesh size is due to the energy norm of $\mathcal{U}$ 
in Lemma~\ref{lemma_numerical_stability} is 
\begin{align*}
\Vert (\boldsymbol{p},w)\Vert_{\mathcal{U}}^{2} 
= & \Vert w\Vert_{L^{2}(\Omega)}^{2}+\epsilon \Vert \nabla w\Vert_{L^{2}(\Omega)}^{2}
+\Vert w\Vert_{L^{2}(\partial\Omega,\boldsymbol{\beta})}^{2}+\Vert \boldsymbol{p}\Vert_{L^{2}(\Omega)}^{2}\\
&\quad +\Vert \epsilon^{1/2}\nabla\cdot \boldsymbol{p}+\boldsymbol{\beta}\cdot \nabla w\Vert_{L^{2}(\Omega)}^{2},\quad 
\forall (\boldsymbol{p},w)\in\mathcal{U}.
\end{align*}
Thus, we need to utilize inverse inequality and the above restriction on mesh size to obtain upper bound on $\Vert\boldsymbol{\beta}\cdot 
\nabla (u - u_{h})\Vert_{L^{2}(\Omega)}$.
\end{remark}

\section{The approach to analysis} \label{sec:abstract}
In this section, an alternative approach is introduced for numerical analysis of least squares methods in general 
({\em not} restricted to the first order least squares method (\ref{ls_formulation})).
First of all, we show least squares methods and DPG method in abstract settings, respectively. 
Then, we rewrite least squares methods in the framework of DPG method.
A key equation in \cite{GopaQiu:PracticalDPG} is then used to show how to achieve numerical stability of least squares methods.

Suppose we want to approximate solution $\uu\in \mathcal{U}$ satisfying 
\begin{equation*}
\mathcal{L}\uu = f\in \mathcal{V}.
\end{equation*}
Here, $\mathcal{U}$ is a Banach space with norm $\Vert \cdot \Vert_{\mathcal{U}}$, 
and $\mathcal{V}$ is a Hilbert space with norm $\Vert \cdot \Vert_{\mathcal{V}}$.
We assume that the linear operator $\mathcal{L}$ is in $\mathbb{B}(\mathcal{U},\mathcal{V})$, 
which is the space of bounded linear operators from $\mathcal{U}$ to $\mathcal{V}$. 
Then, least squares methods are to find $\uuh\in \mathcal{U}_{h}$ satisfying
\begin{equation}
\label{abstract_LS}
(\mathcal{L}\uuh,\mathcal{L}\ww)_{\mathcal{V}} = (f,\mathcal{L}\ww)_{\mathcal{V}}\quad \forall \ww\in \mathcal{U}_{h}.
\end{equation}
Here, $\mathcal{U}_{h}$ is a finite dimensional trial subspace of $\mathcal{U}$
(where $h$ denotes a parameter determining the finite dimension).

On the other hand, DPG method can be described in the following general context. Suppose we
want to approximate $\uu \in \mathcal{U}$ satisfying
\begin{equation}
  \label{eq:1}
  b(\uu,\vv) = l(\vv), \quad \forall \vv \in \mathcal{V}.
\end{equation}
Here $\mathcal{U}$ is a Banach space with norm $\| \cdot\|_{\mathcal{U}}$ and
$\mathcal{V}$ is a Hilbert space under an inner product $(\cdot,\cdot)_{\mathcal{V}}$ with 
corresponding norm $\| \cdot \|_{\mathcal{V}}$. We
assume that the bi-linear form $b(\cdot,\cdot): \mathcal{U} \times \mathcal{V} \mapsto
\RRR$ is continuous and the linear form $l(\cdot): \mathcal{V}\mapsto \RRR$ is
also continuous. Define $T: \mathcal{U} \mapsto \mathcal{V}$ by
\begin{equation}
  \label{eq:T}
  (T\ww,\vv)_{\mathcal{V}} = b(\ww,\vv),\qquad \forall  \vv \in \mathcal{V}.
\end{equation}
Then, the DPG approximation to $\uu$, lies in a finite dimensional
trial subspace $\mathcal{U}_h \subset \mathcal{U}$. It satisfies
\begin{equation}
  \label{eq:2}
  b(\uuh,\vv) = l(\vv),\quad\forall \vv \in \mathcal{V}_h,
\end{equation}
where $\mathcal{V}_h = T(\mathcal{U}_h)$.  Since $\mathcal{U}_h \ne \mathcal{V}_h$ in general, this is a
Petrov-Galerkin approximation. The method~\eqref{eq:2} is the {\em
  DPG method}. The excellent stability and approximation
properties of this method are well
known~\cite{DemkoGopal:2010:DPG2,DemkoGopal:DPGanl}.

Now, we can rewrite the least squares methods (\ref{abstract_LS}) in the framework of DPG method in the following way.
The corresponding DPG method is to find $\uuh\in\mathcal{U}_{h}$ such that
\begin{equation}
\label{abstract_DPG}
b(\uuh,\vv) = (f,\vv)_{\mathcal{V}}\quad \forall \vv\in \mathcal{V}_{h}= T(\mathcal{U}_{h}),
\end{equation}
where the bi-linear form 
\begin{equation*}
b(\ww, \vv) = (\mathcal{L}\ww, \vv)_{\mathcal{V}},\quad\forall \ww\in \mathcal{U},\vv\in\mathcal{V}.
\end{equation*}
Here, for any $\ww \in\mathcal{U}$, 
\begin{equation*}
(T\ww, \delta \vv)_{\mathcal{V}} = b(\ww, \delta\vv)\quad \forall \delta \vv \in \mathcal{V}.
\end{equation*}

We say the corresponding DPG method (\ref{abstract_DPG}) is numerical stable if there is a constant $C$ independent of mesh size $h$, 
such that for any $\ww\in \mathcal{U}_{h}$,
\begin{equation}
\label{abstract_numerical_stability}
\Vert \ww\Vert_{\mathcal{U}}\leq C \sup_{0\neq \vv\in \mathcal{V}_{h}}\dfrac{ b(\ww, \vv) }{\Vert \vv\Vert_{\mathcal{V}}}.
\end{equation}

In general, it is not easy to know what are elements in the finite dimensional space $\mathcal{V}_{h}$. So, it is usually not easy 
to estimate the right hand side of the inequality (\ref{abstract_numerical_stability}). But, according to the following 
Theorem~\ref{T_property},  we have that for any $\ww\in \mathcal{U}_{h}\subset \mathcal{U}$,
\begin{equation}
\label{abstract_key_equation}
\sup_{0\neq \vv\in \mathcal{V}_{h}}\dfrac{ b(\ww, \vv) }{\Vert \vv\Vert_{\mathcal{V}}} = 
\sup_{0\neq \vv\in \mathcal{V}}\dfrac{ b(\ww, \vv) }{\Vert \vv\Vert_{\mathcal{V}}}.
\end{equation}
(\ref{abstract_key_equation}) is the key equation discovered in work for DPG methods. 
We call $\mathcal{V}$ the ``equivalent" test function space.
By (\ref{abstract_key_equation}), in order to obtain 
(\ref{abstract_numerical_stability}), it is sufficient to achieve
\begin{equation}
\label{abstract_numerical_stability_estimate}
\Vert \ww\Vert_{\mathcal{U}}\leq C \sup_{0\neq \vv\in \mathcal{V}}\dfrac{ b(\ww, \vv) }{\Vert \vv\Vert_{\mathcal{V}}}.
\end{equation}
The inequality (\ref{abstract_numerical_stability_estimate}) is usually easier to be obtained than (\ref{abstract_numerical_stability}).
Notice that the least squares methods (\ref{abstract_LS}) is actually the same as the corresponding DPG method (\ref{abstract_DPG}).
This explains why it is relatively easier to obtain numerical stability of least squares methods. 
In fact, first order least squares methods based on first order Friedrichs' systems in 
\cite{ErnGuermond:2006:friedrich1,ErnGuermond:2006:friedrich2,ErnGuermond:2008:friedrich3} 
can be shown to have numerical stability by using the key equation (\ref{abstract_key_equation}).

The proof of following Theorem~\ref{T_property} is included in that of Theorem~$2.1$ in \cite{GopaQiu:PracticalDPG}.
In \cite{GopaQiu:PracticalDPG}, the solution space $\mathcal{U}$ is restricted to Hilbert spaces. We put the proof here 
in order to make this paper more self-contained.

\begin{theorem}
\label{T_property}
Let $b:\mathcal{U}\times \mathcal{V} \rightarrow \mathbb{R}$ be a continuous bi-linear mapping.
Here, $\mathcal{U}$ is a normed linear space, 
and $\mathcal{V}$ is a Hilbert space with associate norm $\Vert \cdot\Vert_{\mathcal{V}}$.

We define a linear operator $T:\mathcal{U}\mapsto \mathcal{V}$ by
\begin{equation}
\label{def_T_op}
(T\ww, \delta \vv)_{\mathcal{V}} = b(\ww,\delta \vv),\quad \forall \delta \vv\in \mathcal{V}.
\end{equation}

Then, for any $\ww\in \mathcal{U}$, we have that
\begin{equation}
\label{id_dpg}
\sup_{0\neq \vv\in T(\mathcal{U})}\dfrac{b(\ww,\vv)}{\Vert \vv\Vert_{\mathcal{V}}}=
\sup_{0\neq \vv\in \mathcal{V}}\dfrac{ b(\ww,\vv)}{\Vert \vv\Vert_{\mathcal{V}}}.
\end{equation}
\end{theorem}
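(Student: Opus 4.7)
The plan is to reduce both suprema to a single quantity, namely $\|T\ww\|_{\mathcal{V}}$, and exhibit an explicit maximizer that already lives in the smaller set $T(\mathcal{U})$. The one direction is free: since $T(\mathcal{U}) \subset \mathcal{V}$, one has
\[
\sup_{0\neq \vv\in T(\mathcal{U})}\frac{b(\ww,\vv)}{\|\vv\|_{\mathcal{V}}}
\;\le\;
\sup_{0\neq \vv\in \mathcal{V}}\frac{b(\ww,\vv)}{\|\vv\|_{\mathcal{V}}}.
\]
So the real content is the opposite direction, and this is where the definition of $T$ in \eqref{def_T_op} does all the work.

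First I would rewrite the right-hand supremum using the defining identity $b(\ww,\vv)=(T\ww,\vv)_{\mathcal{V}}$ valid for every $\vv\in\mathcal{V}$. By Cauchy--Schwarz in $\mathcal{V}$, the supremum of $(T\ww,\vv)_{\mathcal{V}}/\|\vv\|_{\mathcal{V}}$ over nonzero $\vv\in\mathcal{V}$ equals $\|T\ww\|_{\mathcal{V}}$, with equality attained whenever $\vv$ is a positive scalar multiple of $T\ww$. Hence the right-hand side of \eqref{id_dpg} equals $\|T\ww\|_{\mathcal{V}}$.

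Next I would produce the same value on the left by picking the test function $\vv^{\star}=T\ww$. By construction $\vv^{\star}\in T(\mathcal{U})$, and substituting gives
\[
\frac{b(\ww,T\ww)}{\|T\ww\|_{\mathcal{V}}}
\;=\;
\frac{(T\ww,T\ww)_{\mathcal{V}}}{\|T\ww\|_{\mathcal{V}}}
\;=\;
\|T\ww\|_{\mathcal{V}},
\]
which shows that the left-hand supremum is at least $\|T\ww\|_{\mathcal{V}}$. Combined with the trivial inequality and the identification of the right-hand side, both sides equal $\|T\ww\|_{\mathcal{V}}$ and \eqref{id_dpg} follows. The degenerate case $T\ww=0$ needs no separate treatment: by the definition of $T$ this forces $b(\ww,\vv)\equiv 0$ on all of $\mathcal{V}$, so both suprema are simply zero.

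The proof really has no hard step; the main obstacle is only conceptual, namely recognizing that the Riesz-type maximizer of $\vv\mapsto b(\ww,\vv)/\|\vv\|_{\mathcal{V}}$ is $T\ww$ itself, and that $T\ww$ automatically belongs to the image space $T(\mathcal{U})$. Once this observation is made, restricting the supremum from $\mathcal{V}$ down to $T(\mathcal{U})$ loses nothing, which is precisely the ``equivalent test space'' phenomenon exploited in \eqref{abstract_key_equation}. Note that the argument never uses completeness or linearity of $\mathcal{U}$ beyond the fact that $T$ maps into $\mathcal{V}$, so the result indeed holds for $\mathcal{U}$ merely a normed linear space, as claimed in the statement.
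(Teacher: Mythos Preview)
Your proof is correct and follows essentially the same route as the paper's: both arguments use the trivial inclusion $T(\mathcal{U})\subset\mathcal{V}$ for one inequality, rewrite $b(\ww,\vv)=(T\ww,\vv)_{\mathcal{V}}$, and then test with $\vv=T\ww$ to close the other direction (handling $T\ww=0$ separately). The only cosmetic difference is that you explicitly identify both suprema with $\|T\ww\|_{\mathcal{V}}$ via Cauchy--Schwarz, whereas the paper writes the same idea as a single chain of equalities and inequalities.
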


\begin{proof}
Since the bi-linear mapping is continuous and $\mathcal{V}$ is a Hilbert space, the operator $T$ is well-defined.

We take $\ww\in \mathcal{U}$ arbitrarily. 
It is straightforward to see that 
\begin{equation*}
\sup_{0\neq \vv\in T(\mathcal{U})}\dfrac{ b(\ww,\vv)}{\Vert \vv\Vert_{\mathcal{V}}}\leq
\sup_{0\neq \vv\in \mathcal{V}}\dfrac{ b(\ww,\vv)}{\Vert \vv\Vert_{\mathcal{V}}},
\end{equation*}
since $T(\mathcal{U})\subset \mathcal{V}$.

If $T \ww=0\in\mathcal{U}$, then (\ref{id_dpg}) is obviously true due to (\ref{def_T_op}).
If $T \ww\neq 0$, then by (\ref{def_T_op}) and the fact that $\mathcal{V}$ is a Hilbert space, we have
\begin{equation*}
\sup_{0\neq \vv\in \mathcal{V}}\dfrac{ b(\ww,\vv)}{\Vert \vv\Vert_{\mathcal{V}}} 
= \sup_{0\neq \vv\in \mathcal{V}}\dfrac{ (T \ww, \vv)_{\mathcal{V}} }{\Vert \vv\Vert_{\mathcal{V}}} 
= \dfrac{(T\ww, T\ww)_{\mathcal{V}}}{\Vert T\ww \Vert_{\mathcal{V}}}
= \dfrac{b(\ww,T\ww)}{\Vert T\ww \Vert_{\mathcal{V}}}
\leq \sup_{0\neq \vv\in T(\mathcal{U})}\dfrac{ b(\ww,\vv)}{\Vert \vv\Vert_{\mathcal{V}}}.
\end{equation*}

We can conclude that for any $\ww\in \mathcal{U}$, (\ref{id_dpg}) is true.
\end{proof}

\section{Convergence analysis}\label{sec:convergence}
In this section, we shall prove Theorem~\ref{MainTh1} and Theorem~\ref{MainTh2}.
According to the approach of analysis in section~\ref{sec:abstract}, we rewrite first order least squares method 
(\ref{ls_formulation}) in the framework of DPG method. Then, we show the numerical stability of corresponding DPG method. 
Finally, we prove Theorem~\ref{MainTh1} and Theorem~\ref{MainTh2}. 

Throughout this section, we define $\mathcal{U}=H(\text{div},\Omega)\times H^{1}(\Omega)$ and 
$\mathcal{V}=L^{2}(\Omega;\mathbb{R}^{d})\times L^{2}(\Omega) \times L^{2}(\partial\Omega)$.
The inner product of $\mathcal{V}$ is defined by
\begin{align}
\label{inner_product_V}
& ((\boldsymbol{r},v,\mu),(\delta \boldsymbol{r},\delta v, \delta \mu))_{\mathcal{V}} \\
\nonumber
= &  (\boldsymbol{r},\delta \boldsymbol{r})_{\Omega} + (v,\delta v)_{\Omega}
+\Sigma_{F\in \mathcal{E}_h^{\partial}} h_{F}^{-1}\langle \left(\epsilon+\max (-\boldsymbol{\beta}\cdot \boldsymbol{n}(x),0)\right)
\mu, \delta \mu\rangle_{F},\\
\nonumber
&\quad \forall (\boldsymbol{r},v,\mu),(\delta \boldsymbol{r},\delta v, \delta \mu)\in\mathcal{V}.
\end{align}
Obviously, $\mathcal{V}$ is a Hilbert space with respect to the inner product in (\ref{inner_product_V}).
For any $\mu\in L^{2}(\partial\Omega)$, we define
\begin{equation}
\label{def_norm_boundary_beta}
\Vert \mu\Vert_{L^{2}(\partial\Omega,\boldsymbol{\beta})}^{2}
=\Sigma_{F\in \mathcal{E}_h^{\partial}} h_{F}^{-1}
\Vert  \left(\epsilon+\max (-\boldsymbol{\beta}\cdot \boldsymbol{n}(x),0)\right)^{1/2}  \mu\Vert_{L^{2}(F)}^{2}.
\end{equation}
The norm of $\mathcal{V}$ is
\begin{align}
\label{norm_V}
\Vert (\boldsymbol{r},v,\mu)\Vert_{\mathcal{V}}^{2} = \Vert \boldsymbol{r}\Vert_{L^{2}(\Omega)}^{2}
+\Vert v\Vert_{L^{2}(\Omega)}^{2}+\Vert \mu\Vert_{L^{2}(\partial\Omega,\boldsymbol{\beta})}^{2},\quad 
\forall (\boldsymbol{r},v,\mu)\in \mathcal{V}.
\end{align}
The norm of $\mathcal{U}$ is defined by
\begin{align}
\label{norm_U}
& \Vert (\boldsymbol{p},w)\Vert_{\mathcal{U}}^{2} \\
\nonumber
= & \Vert w\Vert_{L^{2}(\Omega)}^{2}+\epsilon \Vert \nabla w\Vert_{L^{2}(\Omega)}^{2}
+\Vert w\Vert_{L^{2}(\partial\Omega,\boldsymbol{\beta})}^{2}+\Vert \boldsymbol{p}\Vert_{L^{2}(\Omega)}^{2}\\
\nonumber
&\quad +\Vert \epsilon^{1/2}\nabla\cdot \boldsymbol{p}+\boldsymbol{\beta}\cdot \nabla w\Vert_{L^{2}(\Omega)}^{2},\quad 
\forall (\boldsymbol{p},w)\in\mathcal{U}.
\end{align}

\subsection{Rewrite least squares method in DPG framework}\label{subsec:rewrite}
The first order least squares method (\ref{ls_formulation}) is the same as the corresponding 
DPG method (\ref{dpg_formulation}) in the following.

Equivalently, the first order least squares method (\ref{ls_formulation}) is to find $(\boldsymbol{q}_{h},u_{h})\in \mathcal{U}_{h}$ satisfying
\begin{align}
\label{dpg_formulation}
& b ((\boldsymbol{q}_{h}, u_{h}), (\boldsymbol{r},v,\mu))\\
\nonumber
= & (f,v)_{\Omega}
+\Sigma_{F\in \mathcal{E}_h^{\partial}} h_{F}^{-1}
\langle \left(\epsilon+\max (-\boldsymbol{\beta}\cdot \boldsymbol{n}(x),0)\right) g, \mu\rangle_{F},\\
\nonumber
&\qquad \forall (\boldsymbol{r},v,\mu) \in T(\mathcal{U}_{h}).
\end{align}
Here, the bi-linear form $b(\cdot,\cdot)$ is defined by
\begin{align}
\label{dpg_bilinear_form}
 & b ((\boldsymbol{p},w), (\boldsymbol{r},v,\mu)) \\
 \nonumber
= & \left(\boldsymbol{p}+\epsilon^{1/2}\nabla w, \boldsymbol{r}\right)_{\Omega}
+ \left( \epsilon^{1/2}\nabla\cdot \boldsymbol{p} + \boldsymbol{\beta}\cdot \nabla w + c w, v \right)_{\Omega}\\
\nonumber
& +\Sigma_{F\in \mathcal{E}_h^{\partial}} h_{F}^{-1}
\langle \left(\epsilon+\max (-\boldsymbol{\beta}\cdot \boldsymbol{n}(x),0)\right) w, \mu\rangle_{F},
\end{align}
for any $(\boldsymbol{p},w)\in \mathcal{U}$, $(\boldsymbol{r},v,\mu)\in \mathcal{V}$.
Obviously, the bi-linear form (\ref{dpg_bilinear_form}) is continuous on $\mathcal{U}\times\mathcal{V}$ 
with respect to the norm (\ref{norm_U}) and norm (\ref{norm_V}).
The operator $T: \mathcal{U} \rightarrow \mathcal{V}$ is defined by
\begin{align}
\label{T_op}
& (T(\boldsymbol{p},w),(\delta \boldsymbol{r}, \delta v, \delta \mu))_{\mathcal{V}}\\
\nonumber
= &  b((\boldsymbol{p},w), (\delta \boldsymbol{r}, \delta v, \delta \mu)),\quad 
\forall (\delta \boldsymbol{r}, \delta v, \delta \mu)\in \mathcal{V}.
\end{align}
It is easy to see that for any $(\boldsymbol{p},w)\in \mathcal{U}$,
\begin{equation}
\label{T_op_explicit}
T(\boldsymbol{p} ,w) = (\boldsymbol{p}+\epsilon^{1/2}\nabla w, 
\epsilon^{1/2}\nabla\cdot \boldsymbol{p} + \boldsymbol{\beta}\cdot \nabla w + c w, w|_{\partial\Omega}).
\end{equation}

\subsection{Numerical stability}
In the following Lemma~\ref{lemma_numerical_stability},
we show numerical stability of  the corresponding DPG method (\ref{dpg_formulation}). Notice that DPG method  (\ref{dpg_formulation}) 
is the same as first order least squares method (\ref{ls_formulation}).

\begin{lemma}
\label{lemma_numerical_stability}
If the assumptions (\ref{AS_beta}) hold, then there is a positive constant $C$, which is independent of $\epsilon$, 
such that for any 
$(e_{\boldsymbol{q}}, e_{u})\in \mathcal{U}_{h}$, 
\begin{align}
\label{numerical_stability1}
\Vert (e_{\boldsymbol{q}},e_{u})\Vert_{\mathcal{U}} \leq 
C \sup_{0\neq (\boldsymbol{r},v,\mu)\in T(\mathcal{U}_{h})}
\dfrac{b( (e_{\boldsymbol{q}},e_{u}), (\boldsymbol{r},v,\mu) )}{ \Vert (\boldsymbol{r},v,\mu) \Vert_{\mathcal{V}}}.
\end{align}

In addition, if $0 < \epsilon^{1/2}\leq h_{K}$ for any $K\in\mathcal{T}_{h}$, then we have
\begin{align}
\label{numerical_stability2}
\Vert (e_{\boldsymbol{q}},e_{u})\Vert_{\mathcal{U}} + \Vert \boldsymbol{\beta}\cdot \nabla e_{u} \Vert_{L^{2}(\Omega)} \leq 
C \sup_{0\neq (\boldsymbol{r},v,\mu)\in T(\mathcal{U}_{h})}
\dfrac{b( (e_{\boldsymbol{q}},e_{u}), (\boldsymbol{r},v,\mu) )}{ \Vert (\boldsymbol{r},v,\mu) \Vert_{\mathcal{V}}}.
\end{align}
\end{lemma}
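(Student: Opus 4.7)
The plan is to invoke Theorem~\ref{T_property} to identify the discrete supremum $\sup_{0\neq v\in T(\mathcal{U}_{h})}b((e_{\boldsymbol{q}},e_{u}),v)/\|v\|_{\mathcal{V}}$ with the continuous supremum over all of $\mathcal{V}$; since $\mathcal{V}$ is Hilbert, Riesz representation identifies the latter with $\|T(e_{\boldsymbol{q}},e_{u})\|_{\mathcal{V}}$. It therefore suffices to prove the norm equivalence $\|(e_{\boldsymbol{q}},e_{u})\|_{\mathcal{U}}\leq C\|T(e_{\boldsymbol{q}},e_{u})\|_{\mathcal{V}}$ on $\mathcal{U}_{h}$. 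By construction, $\|T(e_{\boldsymbol{q}},e_{u})\|_{\mathcal{V}}^{2}$ directly controls the three pieces $\|e_{\boldsymbol{q}}+\epsilon^{1/2}\nabla e_{u}\|^{2}$, $\|\epsilon^{1/2}\nabla\cdot e_{\boldsymbol{q}}+\boldsymbol{\beta}\cdot\nabla e_{u}+c e_{u}\|^{2}$, and $\|e_{u}\|_{L^{2}(\partial\Omega,\boldsymbol{\beta})}^{2}$. From these the remaining term $\|\epsilon^{1/2}\nabla\cdot e_{\boldsymbol{q}}+\boldsymbol{\beta}\cdot\nabla e_{u}\|^{2}$ of $\|(e_{\boldsymbol{q}},e_{u})\|_{\mathcal{U}}^{2}$ follows by triangle against $\|c e_{u}\|^{2}$, and the pair $\|e_{\boldsymbol{q}}\|^{2}+\epsilon\|\nabla e_{u}\|^{2}$ is extracted from the identity
\[
\|e_{\boldsymbol{q}}\|^{2}+\epsilon\|\nabla e_{u}\|^{2}=\|e_{\boldsymbol{q}}+\epsilon^{1/2}\nabla e_{u}\|^{2}+2(\epsilon^{1/2}\nabla\cdot e_{\boldsymbol{q}},e_{u})-2\epsilon^{1/2}\langle e_{\boldsymbol{q}}\cdot\boldsymbol{n},e_{u}\rangle_{\partial\Omega},
\]
by splitting $\epsilon^{1/2}\nabla\cdot e_{\boldsymbol{q}}=(\epsilon^{1/2}\nabla\cdot e_{\boldsymbol{q}}+\boldsymbol{\beta}\cdot\nabla e_{u}+c e_{u})-(\boldsymbol{\beta}\cdot\nabla e_{u}+c e_{u})$ and using (\ref{AS2}) to discard the nonnegative quadratic $(c-\tfrac12\nabla\cdot\boldsymbol{\beta},e_{u}^{2})$, while the boundary piece is absorbed through the discrete trace inequality (\ref{fem_space_assump1}) together with the relation $\epsilon^{1/2}\|e_{u}\|_{F}\leq h_{F}^{1/2}\|e_{u}\|_{L^{2}(\partial\Omega,\boldsymbol{\beta})}$ valid on quasi-uniform meshes. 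The entire argument thus pivots on a uniform bound $\|e_{u}\|_{L^{2}(\Omega)}^{2}\leq C\|T(e_{\boldsymbol{q}},e_{u})\|_{\mathcal{V}}^{2}$; at this stage one already has $\|e_{\boldsymbol{q}}\|^{2}\leq C(\|T(e_{\boldsymbol{q}},e_{u})\|_{\mathcal{V}}^{2}+\|e_{u}\|^{2})$.

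For this $L^{2}$-bound I adapt the weighted-test-function technique of \cite{AyusoMarini:cdf}. Pick $M>\|\psi\|_{L^{\infty}(\Omega)}$ and set $\tilde\psi:=\psi-M\in W^{1,\infty}(\Omega)$, so that $\tilde\psi<0$ on $\bar\Omega$ while $\boldsymbol{\beta}\cdot\nabla\tilde\psi=\boldsymbol{\beta}\cdot\nabla\psi\geq b_{0}>0$ by (\ref{AS1}). Test with $v:=(0,-\tilde\psi e_{u},0)\in\mathcal{V}$, which is admissible precisely because the key equation (\ref{abstract_key_equation}) permits any $v\in\mathcal{V}$ even though generically $v\notin T(\mathcal{U}_{h})$. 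From $b(\cdot,\cdot)=(T\cdot,\cdot)_{\mathcal{V}}$ and Cauchy--Schwarz one obtains the upper bound $|b((e_{\boldsymbol{q}},e_{u}),v)|\leq C\|T(e_{\boldsymbol{q}},e_{u})\|_{\mathcal{V}}\|e_{u}\|$. In the other direction, integrating by parts in $-(\boldsymbol{\beta}\cdot\nabla e_{u}+c e_{u},\tilde\psi e_{u})$ and using $\tilde\psi\leq 0$ together with assumptions (\ref{AS_beta}) yields
\[
b((e_{\boldsymbol{q}},e_{u}),v)\geq \tfrac{b_{0}}{2}\|e_{u}\|^{2}-\bigl|(\epsilon^{1/2}\nabla\cdot e_{\boldsymbol{q}},\tilde\psi e_{u})\bigr|-Ch\|e_{u}\|_{L^{2}(\partial\Omega,\boldsymbol{\beta})}^{2}.
\]
The residual term $(\epsilon^{1/2}\nabla\cdot e_{\boldsymbol{q}},\tilde\psi e_{u})$ is then rewritten by a further integration by parts (valid since $e_{\boldsymbol{q}}\in H(\mathrm{div})$ and $\tilde\psi e_{u}\in H^{1}$) as $-(\tilde\psi e_{\boldsymbol{q}},\epsilon^{1/2}\nabla e_{u})-\epsilon^{1/2}(e_{\boldsymbol{q}},e_{u}\nabla\tilde\psi)+\epsilon^{1/2}\langle e_{\boldsymbol{q}}\cdot\boldsymbol{n},\tilde\psi e_{u}\rangle_{\partial\Omega}$, each of whose pieces is bounded by $C\|e_{\boldsymbol{q}}\|$ times one of $\|e_{\boldsymbol{q}}+\epsilon^{1/2}\nabla e_{u}\|+\|e_{\boldsymbol{q}}\|$, $\|e_{u}\|$, or $\|e_{u}\|_{L^{2}(\partial\Omega,\boldsymbol{\beta})}$ after invoking $\epsilon\leq 1$ and (\ref{fem_space_assump1}).

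Combining these upper and lower bounds with the $\|e_{\boldsymbol{q}}\|^{2}$-bound from the first paragraph produces a coupled quadratic inequality in $\|e_{u}\|$ and $\|e_{\boldsymbol{q}}\|$, which Young's inequality with a sufficiently small absorbing parameter closes, giving $\|e_{u}\|^{2}+\|e_{\boldsymbol{q}}\|^{2}\leq C\|T(e_{\boldsymbol{q}},e_{u})\|_{\mathcal{V}}^{2}$ and hence (\ref{numerical_stability1}). The strengthened bound (\ref{numerical_stability2}) is then an immediate consequence: under $\epsilon^{1/2}\leq h_{K}$, the inverse inequality (\ref{fem_space_assump2}) gives $\|\epsilon^{1/2}\nabla\cdot e_{\boldsymbol{q}}\|_{K}\leq C\epsilon^{1/2}h_{K}^{-1}\|e_{\boldsymbol{q}}\|_{K}\leq C\|e_{\boldsymbol{q}}\|_{K}$, whence $\|\boldsymbol{\beta}\cdot\nabla e_{u}\|\leq \|\epsilon^{1/2}\nabla\cdot e_{\boldsymbol{q}}+\boldsymbol{\beta}\cdot\nabla e_{u}\|+\|\epsilon^{1/2}\nabla\cdot e_{\boldsymbol{q}}\|\leq C\|(e_{\boldsymbol{q}},e_{u})\|_{\mathcal{U}}$. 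The main obstacle I anticipate is precisely the circular coupling between $\|e_{u}\|$ and $\|e_{\boldsymbol{q}}\|$ noted above: the weighted test returns an $\|e_{u}\|^{2}$ lower bound contaminated by a $\nabla\cdot e_{\boldsymbol{q}}$ contribution, and eliminating it with constants uniform in $\epsilon$ forces use of the $H(\mathrm{div})$-regularity of $e_{\boldsymbol{q}}$ through the second integration by parts, trading $\nabla\cdot e_{\boldsymbol{q}}$ for the pairing $(e_{\boldsymbol{q}},\epsilon^{1/2}\nabla e_{u})$, rather than a naive $L^{2}$ Cauchy--Schwarz that would re-introduce $\|\boldsymbol{\beta}\cdot\nabla e_{u}\|$ and destroy the diffusion independence.
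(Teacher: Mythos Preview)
Your proposal is correct, and it shares with the paper the crucial opening move of invoking Theorem~\ref{T_property} to replace the supremum over $T(\mathcal{U}_{h})$ by the supremum over all of $\mathcal{V}$; the treatment of~(\ref{numerical_stability2}) via~(\ref{fem_space_assump2}) is identical. The two arguments diverge, however, in how the core bound $\|e_{u}\|_{L^{2}(\Omega)}+\|e_{\boldsymbol{q}}\|_{L^{2}(\Omega)}\leq C\theta$ is obtained. The paper selects a \emph{single} test function $(\boldsymbol{r},v,\mu)=\bigl((e^{-\psi}+\kappa)e_{\boldsymbol{q}},\,(e^{-\psi}+\kappa)e_{u},\,(e^{-\psi}+\kappa)(e_{u}-\epsilon^{1/2}\widehat{e}_{\boldsymbol{q}})\bigr)$ with the \emph{same} exponential weight on all components and a carefully designed $\mu$. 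Integrating $(\epsilon^{1/2}\nabla\!\cdot e_{\boldsymbol{q}},(e^{-\psi}+\kappa)e_{u})$ by parts then produces $-(\epsilon^{1/2}\nabla e_{u},(e^{-\psi}+\kappa)e_{\boldsymbol{q}})$, which cancels exactly against the cross term coming from $(e_{\boldsymbol{q}}+\epsilon^{1/2}\nabla e_{u},\boldsymbol{r})$; the only residue is the lower-order term $\epsilon^{1/2}(e_{\boldsymbol{q}},e^{-\psi}(\nabla\psi)e_{u})$, absorbed by taking $\kappa$ large, and the boundary term, absorbed by the choice of $\mu$. This yields $\|e_{\boldsymbol{q}}\|^{2}$ and $\|e_{u}\|^{2}$ simultaneously with no circularity at all. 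Your approach instead tests only the $v$-component with weight $-\tilde\psi$, which leaves the pairing $(\tilde\psi e_{\boldsymbol{q}},\epsilon^{1/2}\nabla e_{u})$ uncancelled and forces the coupled system $\|e_{\boldsymbol{q}}\|^{2}\leq C(\theta^{2}+\theta\|e_{u}\|)$, $\tfrac{b_{0}}{2}\|e_{u}\|^{2}\leq C(\theta\|e_{u}\|+\theta^{2}+\|e_{\boldsymbol{q}}\|^{2}+\|e_{\boldsymbol{q}}\|\|e_{u}\|)$. This does close---the key being that the first inequality depends on $\|e_{u}\|$ only linearly, so its $\|e_{u}\|^{2}$-contribution after substitution carries an adjustable small parameter---but the bookkeeping is more delicate. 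What your route buys is modularity (the algebraic identity and the weighted test are decoupled) and a linear rather than exponential weight; what the paper's route buys is a one-shot argument with no absorption constants to track.
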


\begin{proof}
According to Theorem~\ref{T_property} and (\ref{T_op}), we have
\begin{align}
\label{inf_sup_eq}
 \sup_{0\neq (\boldsymbol{r},v,\mu)\in T(\mathcal{U}_{h})}
\dfrac{b( (e_{\boldsymbol{q}},e_{u}), (\boldsymbol{r},v,\mu) )}{\Vert (\boldsymbol{r},v,\mu)\Vert_{\mathcal{V}}}
 = \sup_{0\neq (\boldsymbol{r},v,\mu)\in \mathcal{V}}
 \dfrac{b( (e_{\boldsymbol{q}},e_{u}), (\boldsymbol{r},v,\mu) )}{\Vert (\boldsymbol{r},v,\mu)\Vert_{\mathcal{V}}}.
\end{align}

Given $(e_{\boldsymbol{q}},e_{u})\in \mathcal{U}_{h}$, we define
\begin{equation}
\label{inf_sup_value}
\theta = \sup_{0\neq (\boldsymbol{r},v,\mu)\in \mathcal{V}}
\dfrac{b( (e_{\boldsymbol{q}},e_{u}), (\boldsymbol{r},v,\mu) )}{\Vert (\boldsymbol{r},v,\mu)\Vert_{\mathcal{V}}}.
\end{equation}

Recall that $\psi$ is introduced in (\ref{AS1}). Let $\kappa$ be a non-negative number, which will be determined below. We take
\begin{align}
\label{optimal_test}
\boldsymbol{r} = (e^{-\psi}+\kappa)e_{\boldsymbol{q}},\quad 
v = (e^{-\psi}+\kappa)e_{u},\quad
\mu = (e^{-\psi}+\kappa) (e_{u} - \epsilon^{1/2}\widehat{e}_{\boldsymbol{q}}),
\end{align}
where $\widehat{e}_{\boldsymbol{q}}|_{F}
=h_{F}\left(\epsilon+\max (-\boldsymbol{\beta}\cdot \boldsymbol{n}(x),0)\right)^{-1}e_{\boldsymbol{q}}\cdot \boldsymbol{n}|_{F}$ 
for any $F\in \mathcal{E}_{h}\cap \partial\Omega$.
It is easy to see that $(\boldsymbol{r},v,\mu)\in \mathcal{V}$, and
\begin{align*}
 & b ((e_{\boldsymbol{q}},e_{u}), (\boldsymbol{r},v,\mu)) \\
= &  \left(e_{\boldsymbol{q}}+\epsilon^{1/2}\nabla e_{u}, (e^{-\psi}+\kappa)e_{\boldsymbol{q}}\right)_{\Omega}
+ \left( \epsilon^{1/2}\nabla\cdot e_{\boldsymbol{q}} + \boldsymbol{\beta}\cdot \nabla e_{u} + c e_{u}, 
(e^{-\psi}+\kappa)e_{u} \right)_{\Omega}\\
& + \Sigma_{F\in \mathcal{E}_h^{\partial}} h_{F}^{-1}
\langle \left(\epsilon+\max (-\boldsymbol{\beta}\cdot \boldsymbol{n}(x),0)\right) e_{u}, (e^{-\psi}+\kappa) e_{u}\rangle_{F}\\
& - \Sigma_{F\in \mathcal{E}_h^{\partial}}
\langle \epsilon^{1/2} e_{\boldsymbol{q}}\cdot \boldsymbol{n}, (e^{-\psi}+\kappa) e_{u} \rangle_{\partial\Omega}.
\end{align*}
Applying integration by parts on the term $(\epsilon^{1/2}\nabla\cdot e_{\boldsymbol{q}}, (e^{-\psi}+\kappa)e_{u})_{\Omega}$, we have
\begin{align*}
 & b ((e_{\boldsymbol{q}},e_{u}), (\boldsymbol{r},v,\mu)) \\
= &   (e_{\boldsymbol{q}},(e^{-\psi}+\kappa)e_{\boldsymbol{q}})_{\Omega}  
+\epsilon^{1/2} (e_{\boldsymbol{q}},(\nabla \psi) e^{-\psi} e_{u})_{\Omega} 
+ \left(\boldsymbol{\beta}\cdot \nabla e_{u} + c e_{u}, (e^{-\psi}+\kappa)e_{u} \right)_{\Omega}\\
& + \Sigma_{F\in \mathcal{E}_h^{\partial}} h_{F}^{-1}
\langle \left(\epsilon+\max (-\boldsymbol{\beta}\cdot \boldsymbol{n}(x),0)\right) e_{u}, (e^{-\psi}+\kappa) e_{u}\rangle_{F}.
\end{align*}
Applying integration by parts on the term $(\boldsymbol{\beta}\cdot \nabla e_{u}, (e^{-\psi}+\kappa)e_{u})_{\Omega}$ 
and assumption (\ref{AS1}) and assumption (\ref{AS2}), we have
\begin{align*}
  & b ((e_{\boldsymbol{q}},e_{u}), (\boldsymbol{r},v,\mu)) \\
 \geq & (e_{\boldsymbol{q}},(e^{-\psi}+\kappa)e_{\boldsymbol{q}})_{\Omega}  
 +\epsilon^{1/2} (e_{\boldsymbol{q}},(\nabla \psi) e^{-\psi} e_{u})_{\Omega}
+\dfrac{b_{0}}{2} (e_{u}, e^{-\psi}e_{u})_{\Omega} \\
 & \quad  + \frac{1}{2}\Sigma_{F\in \mathcal{E}_h^{\partial}} h_{F}^{-1}
 \langle \left(\epsilon+\max (-\boldsymbol{\beta}\cdot \boldsymbol{n}(x),0)\right) e_{u},  (e^{-\psi}+\kappa) e_{u}\rangle_{F}.
\end{align*}
If we take $\kappa \geq 0$ big enough (which depends only on $b_{0}$ and $\Vert \psi\Vert_{W^{1,\infty}(\Omega)}$), 
\begin{align}
\label{ns_ineq1}
  & b ((e_{\boldsymbol{q}},e_{u}), (\boldsymbol{r},v,\mu)) \\
\nonumber
 \geq & (e_{\boldsymbol{q}},e^{-\psi}e_{\boldsymbol{q}})_{\Omega} 
 +\frac{b_{0}}{4}(e_{u}, e^{-\psi}e_{u})_{\Omega}\\
\nonumber 
&  + \frac{1}{2}\Sigma_{F\in \mathcal{E}_h^{\partial}} h_{F}^{-1}
\langle \left(\epsilon+\max (-\boldsymbol{\beta}\cdot \boldsymbol{n}(x),0)\right) e_{u}, e^{-\psi} e_{u}\rangle_{F}.
\end{align}

According to (\ref{def_norm_boundary_beta}) and the definition of $\mu$ in (\ref{optimal_test}), we have 
\begin{align*}
\Vert \mu\Vert_{L^{2}(\partial\Omega,\boldsymbol{\beta})} 
\leq C \left( \Vert e_{u}\Vert_{L^{2}(\partial\Omega,\boldsymbol{\beta})}^{2}
+ \Sigma_{F\in \mathcal{E}_h^{\partial}}h_{F} 
\Vert e_{\boldsymbol{q}}\cdot \boldsymbol{n} \Vert_{F}^{2}   \right)^{1/2}.
\end{align*}
By (\ref{fem_space_assump1}) and shape regularity assumption, we have
\begin{align*}
\Vert \mu\Vert_{L^{2}(\partial\Omega,\boldsymbol{\beta})} 
\leq C \left( \Vert e_{u}\Vert_{L^{2}(\partial\Omega,\boldsymbol{\beta})} 
+\Vert e_{\boldsymbol{q}}\Vert_{L^{2}(\Omega)} \right).
\end{align*}
We recall $\Vert (\boldsymbol{r},v,\mu)\Vert_{\mathcal{V}}^{2} = \Vert \boldsymbol{r}\Vert_{L^{2}(\Omega)}^{2}
+\Vert v\Vert_{L^{2}(\Omega)}^{2}+\Vert \mu\Vert_{L^{2}(\partial\Omega,\boldsymbol{\beta})}^{2}$. 
So, we have
\begin{align}
\label{ns_ineq2}
\Vert (\boldsymbol{r},v,\mu)\Vert_{\mathcal{V}}
\leq C\left( \Vert e_{\boldsymbol{q}}\Vert_{L^{2}(\Omega)} 
+\Vert e_{u}\Vert_{L^{2}(\Omega)}+\Vert e_{u}\Vert_{L^{2}(\partial\Omega,\boldsymbol{\beta})} \right).
\end{align}

According to (\ref{ns_ineq1}, \ref{ns_ineq2}), we have
\begin{align*}
& \Vert e_{\boldsymbol{q}}\Vert_{L^{2}(\Omega)} + \Vert e_{u}\Vert_{L^{2}(\Omega)}
+ \Vert e_{u}\Vert_{L^{2}(\partial\Omega,\boldsymbol{\beta})}\leq C \theta .
\end{align*}
It is easy to see $\Vert e_{\boldsymbol{q}}+\epsilon^{1/2}\nabla e_{u}\Vert_{L^{2}(\Omega)}
+\Vert \epsilon^{1/2}\nabla\cdot e_{\boldsymbol{q}} + \boldsymbol{\beta}\cdot \nabla e_{u} + c e_{u}\Vert_{L^{2}(\Omega)}
 \leq \sqrt{2}\theta$.
So, we have
\begin{align*}
\Vert e_{\boldsymbol{q}}\Vert_{L^{2}(\Omega)} + \Vert e_{u}\Vert_{L^{2}(\Omega)} 
+\epsilon^{1/2}\Vert \nabla e_{u}\Vert_{L^{2}(\Omega)}
+ \Vert e_{u}\Vert_{L^{2}(\partial\Omega,\boldsymbol{\beta})}
+\Vert \epsilon^{1/2}\nabla\cdot e_{\boldsymbol{q}} + \boldsymbol{\beta}\cdot \nabla e_{u}\Vert_{L^{2}(\Omega)}
\leq C \theta.
\end{align*}
By the definition (\ref{norm_U}) of norm of $\mathcal{U}$, we have
\begin{align*}
\Vert (e_{\boldsymbol{q}}, e_{u})\Vert_{\mathcal{U}}\leq C\theta.
\end{align*}
Then, by (\ref{inf_sup_eq}), we have (\ref{numerical_stability1}) immediately.

By (\ref{fem_space_assump2}), if $0<\epsilon^{1/2}\leq  h_{K}$ for any $K\in \mathcal{T}_{h}$, we have
\begin{equation*}
\epsilon^{1/2}\Vert \nabla \cdot e_{\boldsymbol{q}}\Vert_{L^{2}(\Omega)} \leq C \Vert e_{\boldsymbol{q}}\Vert_{L^{2}(\Omega)}.
\end{equation*}
Combing the above inequality with (\ref{numerical_stability1}),
we can conclude that (\ref{numerical_stability2}) holds if $0<\epsilon^{1/2}\leq h_{K}$ for any $K\in \mathcal{T}_{h}$.
\end{proof}

\begin{remark}
We want to emphasize that the proof of Lemma~\ref{lemma_numerical_stability} holds for any subspace 
$\boldsymbol{Q}_{h}\times W_{h}$ of $H(\text{div},\Omega)\times H^{1}(\Omega)$, which satisfies (\ref{fem_space_assump1}) 
and (\ref{fem_space_assump2}). It implies that Lemma~\ref{lemma_numerical_stability} is still true if we impose 
Dirichlet boundary condition strongly, like in \cite{CaiWestphal:ls_div}.
\end{remark}

\begin{lemma}
\label{lemma_norm_equivalent}
If the assumptions (\ref{AS_beta}) hold, 
\begin{align}
\label{norm_equivalent}
& C_{0} \Vert (e_{\boldsymbol{q}}, e_{u})\Vert_{\mathcal{U}}\\
\nonumber
\leq & \left( \Vert e_{\boldsymbol{q}}+\epsilon^{1/2}\nabla e_u\Vert_{L^{2}(\Omega)}
+ \Vert \epsilon^{1/2}\nabla\cdot e_{\boldsymbol{q}} +\boldsymbol{\beta}\cdot \nabla e_u + c e_u\Vert_{L^{2}(\Omega)}
+\Vert e_{u}\Vert_{L^{2}(\partial\Omega,\boldsymbol{\beta})}\right)\\
\nonumber
\leq & C_{1} \Vert (e_{\boldsymbol{q}}, e_{u})\Vert_{\mathcal{U}},\quad \forall (e_{\boldsymbol{q}}, e_{u})\in \mathcal{U}_{h}.
\end{align}
Here, the constants $C_{0}$ and $C_{1}$ are independent of $\epsilon$ and $h$.
\end{lemma}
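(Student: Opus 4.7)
The plan is to establish the two inequalities separately. The upper bound by $C_1\|(e_q,e_u)\|_{\mathcal U}$ is elementary: the first summand obeys $\|e_{\boldsymbol q}+\epsilon^{1/2}\nabla e_u\|_{L^2(\Omega)} \leq \|e_{\boldsymbol q}\|_{L^2(\Omega)}+\epsilon^{1/2}\|\nabla e_u\|_{L^2(\Omega)}$ by the triangle inequality; the second summand obeys $\|\epsilon^{1/2}\nabla\cdot e_{\boldsymbol q}+\boldsymbol{\beta}\cdot\nabla e_u+c e_u\|_{L^2(\Omega)} \leq \|\epsilon^{1/2}\nabla\cdot e_{\boldsymbol q}+\boldsymbol{\beta}\cdot\nabla e_u\|_{L^2(\Omega)}+\|c\|_{L^\infty(\Omega)}\|e_u\|_{L^2(\Omega)}$; and the boundary summand $\|e_u\|_{L^2(\partial\Omega,\boldsymbol{\beta})}$ is literally one of the terms in the definition (\ref{norm_U}) of $\|\cdot\|_{\mathcal U}$. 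Each is controlled by $\|(e_{\boldsymbol q},e_u)\|_{\mathcal U}$, so $C_1$ depends only on $\|c\|_{L^\infty}$ and a fixed numerical constant.

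The lower bound is essentially a corollary of Lemma~\ref{lemma_numerical_stability}. Let $E$ denote the middle expression in (\ref{norm_equivalent}). Expanding $b((e_{\boldsymbol q},e_u),(\boldsymbol r,v,\mu))$ via (\ref{dpg_bilinear_form}) and applying Cauchy--Schwarz to each of the three terms---the two domain inner products and the weighted boundary pairing, whose weight matches that in the $\mathcal V$-inner product (\ref{inner_product_V})---yields
\begin{align*}
b((e_{\boldsymbol q},e_u),(\boldsymbol r,v,\mu)) \leq E\,\|(\boldsymbol r,v,\mu)\|_{\mathcal V}
\end{align*}
for every $(\boldsymbol r,v,\mu)\in\mathcal V$. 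Taking the supremum and combining with the identity (\ref{inf_sup_eq}) from Theorem~\ref{T_property} (valid because $(e_{\boldsymbol q},e_u)\in\mathcal U_h$) gives
\begin{align*}
\sup_{0\neq(\boldsymbol r,v,\mu)\in T(\mathcal U_h)}\frac{b((e_{\boldsymbol q},e_u),(\boldsymbol r,v,\mu))}{\|(\boldsymbol r,v,\mu)\|_{\mathcal V}} \leq E.
\end{align*}
Invoking the numerical stability estimate (\ref{numerical_stability1}) then produces $\|(e_{\boldsymbol q},e_u)\|_{\mathcal U} \leq C\,E$, which is the desired lower bound with $C_0 = 1/C$.

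There is no genuine obstacle, since the lemma is a repackaging of Lemma~\ref{lemma_numerical_stability}: the three summands in $E$ are precisely the three components of $T(e_{\boldsymbol q},e_u)$, measured in the norms that define $\|\cdot\|_{\mathcal V}$ in (\ref{norm_V}). The only point requiring attention is that the weighted Cauchy--Schwarz on boundary faces uses exactly the weight $h_F^{-1}(\epsilon+\max(-\boldsymbol\beta\cdot\boldsymbol n,0))$ built into the $\mathcal V$-norm, so the estimate $\int_F (\cdot)\,e_u\,\mu\,ds \leq \|e_u\|_{L^2(\partial\Omega,\boldsymbol\beta)}\|\mu\|_{L^2(\partial\Omega,\boldsymbol\beta)}$ holds with no loss of constant.
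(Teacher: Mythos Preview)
Your proof is correct and follows essentially the same approach as the paper: both obtain the lower bound by invoking Lemma~\ref{lemma_numerical_stability} and recognizing that the middle expression $E$ is (up to equivalence of $\ell^1$ and $\ell^2$ norms on $\mathbb{R}^3$) exactly $\|T(e_{\boldsymbol q},e_u)\|_{\mathcal V}$, while the upper bound follows directly from the triangle inequality and the definition of $\|\cdot\|_{\mathcal U}$. The only cosmetic difference is that the paper computes the supremum over $T(\mathcal U_h)$ exactly as $\|T(e_{\boldsymbol q},e_u)\|_{\mathcal V}$ (since $T(e_{\boldsymbol q},e_u)\in T(\mathcal U_h)$ attains it), whereas you bound it from above via Cauchy--Schwarz; your detour through (\ref{inf_sup_eq}) is harmless but unnecessary, since the Cauchy--Schwarz bound already holds for all of $\mathcal V\supset T(\mathcal U_h)$.
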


\begin{proof}
According to Lemma~\ref{lemma_numerical_stability} and explicit formulation of operator $T$ in (\ref{T_op_explicit}), 
\begin{align*}
& C_{0} \Vert (e_{\boldsymbol{q}}, e_{u})\Vert_{\mathcal{U}}\\
\leq & \sup_{0\neq (\boldsymbol{p},w)\in \mathcal{U}_{h}}\dfrac{b((e_{\boldsymbol{q}}, e_{u}) , (\boldsymbol{p}+\epsilon^{1/2}\nabla w, 
\epsilon^{1/2}\nabla\cdot\boldsymbol{p}+\boldsymbol{\beta}\cdot \nabla w + cw , w|_{\partial\Omega} ) )}
{ \Vert (\boldsymbol{p}+\epsilon^{1/2}\nabla w, 
\epsilon^{1/2}\nabla\cdot\boldsymbol{p}+\boldsymbol{\beta}\cdot \nabla w + cw , w|_{\partial\Omega} )\Vert_{\mathcal{V}} }.
\end{align*}
By the definition of bi-linear form $b$ in (\ref{dpg_bilinear_form}) and the inner product of $\mathcal{V}$ in (\ref{inner_product_V}), we have 
\begin{align*}
& \sup_{0\neq (\boldsymbol{p},w)\in \mathcal{U}_{h}}\dfrac{b((e_{\boldsymbol{q}}, e_{u}) , (\boldsymbol{p}+\epsilon^{1/2}\nabla w, 
\epsilon^{1/2}\nabla\cdot\boldsymbol{p}+\boldsymbol{\beta}\cdot \nabla w + cw , w|_{\partial\Omega} ) )}
{ \Vert (\boldsymbol{p}+\epsilon^{1/2}\nabla w, 
\epsilon^{1/2}\nabla\cdot\boldsymbol{p}+\boldsymbol{\beta}\cdot \nabla w + cw , w|_{\partial\Omega} )\Vert_{\mathcal{V}} }\\
= & \left( \Vert e_{\boldsymbol{q}}+\epsilon^{1/2}\nabla e_u\Vert_{L^{2}(\Omega)}^{2}
+ \Vert \epsilon^{1/2}\nabla\cdot e_{\boldsymbol{q}} +\boldsymbol{\beta}\cdot \nabla e_u + c e_u\Vert_{L^{2}(\Omega)}^{2}
+\Vert e_{u}\Vert_{L^{2}(\partial\Omega,\boldsymbol{\beta})}^{2}\right)^{1/2}.
\end{align*}
So, we have 
\begin{align*}
& C_{0} \Vert (e_{\boldsymbol{q}}, e_{u})\Vert_{\mathcal{U}}\\
\leq & \left( \Vert e_{\boldsymbol{q}}+\epsilon^{1/2}\nabla e_u\Vert_{L^{2}(\Omega)}
+ \Vert \epsilon^{1/2}\nabla\cdot e_{\boldsymbol{q}} +\boldsymbol{\beta}\cdot \nabla e_u + c e_u\Vert_{L^{2}(\Omega)}
+\Vert e_{u}\Vert_{L^{2}(\partial\Omega,\boldsymbol{\beta})}\right).
\end{align*}
On the other hand, by the definition of norm of $\mathcal{U}$ in (\ref{norm_U}), we have
\begin{align*}
& \left( \Vert e_{\boldsymbol{q}}+\epsilon^{1/2}\nabla e_u\Vert_{L^{2}(\Omega)}
+ \Vert \epsilon^{1/2}\nabla\cdot e_{\boldsymbol{q}} +\boldsymbol{\beta}\cdot \nabla e_u + c e_u\Vert_{L^{2}(\Omega)}
+\Vert e_{u}\Vert_{L^{2}(\partial\Omega,\boldsymbol{\beta})}\right)\\
\leq & C_{1} \Vert (e_{\boldsymbol{q}}, e_{u})\Vert_{\mathcal{U}}.
\end{align*}
So, we can conclude that the proof is complete.
\end{proof}

\subsection{Error analysis}
Let $u$ be the solution of equation (\ref{cd_eqs}) and $\boldsymbol{q}= - \epsilon^{1/2} \nabla u$, then, 
\begin{align}
\label{ls_formulaiton_exact_solution}
& \left( \boldsymbol{q}+\epsilon^{1/2}\nabla u, \boldsymbol{p}+\epsilon^{1/2}\nabla w\right)_{\Omega} \\
\nonumber
& \quad+ \left( \epsilon^{1/2}\nabla\cdot \boldsymbol{q} + \boldsymbol{\beta}\cdot \nabla u + c u, 
\epsilon^{1/2}\nabla\cdot \boldsymbol{p} + \boldsymbol{\beta}\cdot \nabla w + c w \right)_{\Omega}\\
\nonumber
& \quad +\Sigma_{F\in \mathcal{E}_h^{\partial}} h_{F}^{-1}
\langle \left(\epsilon+\max (-\boldsymbol{\beta}\cdot \boldsymbol{n}(x),0)\right) u, w\rangle_{F}\\
\nonumber
 = & (f, \epsilon^{1/2}\nabla\cdot \boldsymbol{p} + \boldsymbol{\beta}\cdot \nabla w + c w)_{\Omega}\\
 \nonumber
&\quad +\Sigma_{F\in \mathcal{E}_h^{\partial}} h_{F}^{-1}
\langle \left(\epsilon+\max (-\boldsymbol{\beta}\cdot \boldsymbol{n}(x),0)\right) u, w\rangle_{F},
\quad \forall (\boldsymbol{p},w)\in \mathcal{U}_{h}.
\end{align}

We define the projection errors
\begin{subequations}
\label{projection_errors}
\begin{align}
\label{projection_error1}
& e_{\boldsymbol{q}} := \Pi_{\boldsymbol{Q}} \boldsymbol{q} - \boldsymbol{q}_{h},\\
\label{projection_error2}
& e_{u} := \Pi_{W}u - u_{h}.
\end{align}
\end{subequations}
Here, $\Pi_{\boldsymbol{Q}}:H^{1}(\Omega;\mathbb{R}^{d})\rightarrow\boldsymbol{Q}_{h}$ is the standard Raviart-Thomas projection, 
and $\Pi_{W}:H^{1}(\Omega)\rightarrow W_{h}$ is the standard interpolation.
Then, we have the following error equation:
\begin{align}
\label{error_eq}
& \left( e_{\boldsymbol{q}}+\epsilon^{1/2}\nabla e_u, \boldsymbol{p}+\epsilon^{1/2}\nabla w\right)_{\Omega} \\
\nonumber
& \quad+ \left( \epsilon^{1/2}\nabla\cdot e_{\boldsymbol{q}} + \boldsymbol{\beta}\cdot \nabla e_u + c e_u, 
\epsilon^{1/2}\nabla\cdot \boldsymbol{p} + \boldsymbol{\beta}\cdot \nabla w + c w \right)_{\Omega}\\
\nonumber
& \quad +\Sigma_{F\in \mathcal{E}_h^{\partial}} h_{F}^{-1}
\langle \left(\epsilon+\max (-\boldsymbol{\beta}\cdot \boldsymbol{n}(x),0)\right) e_u, w\rangle_{F}\\
\nonumber
= & \left(  (\Pi_{\boldsymbol{Q}}\boldsymbol{q}-\boldsymbol{q})+\epsilon^{1/2}\nabla (\Pi_{W}u -u), 
\boldsymbol{p}+\epsilon^{1/2}\nabla w\right)_{\Omega} \\
\nonumber
& \quad+\left( \epsilon^{1/2}\nabla\cdot (\Pi_{\boldsymbol{Q}}\boldsymbol{q}-\boldsymbol{q}) 
+\boldsymbol{\beta}\cdot \nabla (\Pi_{W}u -u) +c(\Pi_{W}u - u), 
\epsilon^{1/2}\nabla\cdot \boldsymbol{p} + \boldsymbol{\beta}\cdot \nabla w + c w \right)_{\Omega}\\
\nonumber
& \quad +\Sigma_{F\in \mathcal{E}_h^{\partial}} h_{F}^{-1}
\langle \left(\epsilon+\max (-\boldsymbol{\beta}\cdot \boldsymbol{n}(x),0)\right) (\Pi_{W}u - u), w\rangle_{F},
\end{align}
for any $(\boldsymbol{p},w)\in \mathcal{U}_{h}$.

Since the proof of Theorem~\ref{MainTh2} is similar to that of Theorem~\ref{MainTh1}, we only prove 
Theorem~\ref{MainTh1} in the following.
\begin{proof}(Proof of Theorem~\ref{MainTh1})
According to Lemma~\ref{lemma_numerical_stability}, we have
\begin{align*}
\Vert (e_{\boldsymbol{q}},e_{u})\Vert_{\mathcal{U}}
\leq C \sup_{0\neq (\boldsymbol{r},v,\mu)\in T(\mathcal{U}_{h})}
\dfrac{b( (e_{\boldsymbol{q}},e_{u}), (\boldsymbol{r},v,\mu) )}{ \Vert (\boldsymbol{r},v,\mu)\Vert_{\mathcal{V}} }.
\end{align*}
By the definition of bi-linear form $b$ in (\ref{dpg_bilinear_form}) and above inequality, we have
\begin{align*}
& \Vert (e_{\boldsymbol{q}},e_{u})\Vert_{\mathcal{U}}\\
\leq & C \sup_{0\neq (\boldsymbol{r},v,\mu)\in T(\mathcal{U}_{h})}
\dfrac{b( (\Pi_{\boldsymbol{Q}}\boldsymbol{q}-\boldsymbol{q}, \Pi_{W}u - u), (\boldsymbol{r},v,\mu) )}
{ \Vert (\boldsymbol{r},v,\mu)\Vert_{\mathcal{V}} }\\
\leq & C \Vert (\Pi_{\boldsymbol{Q}}\boldsymbol{q}-\boldsymbol{q}, \Pi_{W}u - u)\Vert_{\mathcal{U}}.
\end{align*}
Then, by approximation properties of $(\Pi_{\boldsymbol{Q}},\Pi_{W})$, we immediately have (\ref{convergence_rate_global}).

In remaining part of the proof, we show (\ref{convergence_rate_global_refined}) holds under the assumption 
$\boldsymbol{\beta}\in W^{2,\infty}(\mathcal{T}_{h})$ and $c\in W^{1,\infty}(\mathcal{T}_{h})$.
 
By the error equation (\ref{error_eq}) and Lemma~\ref{lemma_norm_equivalent}, we have
\begin{align*}
& C_{0}^{2} \Vert (e_{\boldsymbol{q}}, e_{u})\Vert_{\mathcal{U}}^{2}\\
\leq  & \left(  (\Pi_{\boldsymbol{Q}}\boldsymbol{q}-\boldsymbol{q})+\epsilon^{1/2}\nabla (\Pi_{W}u -u), 
e_{\boldsymbol{q}}+\epsilon^{1/2}\nabla e_{u}\right)_{\Omega} \\
& \quad+\left( \epsilon^{1/2}\nabla\cdot (\Pi_{\boldsymbol{Q}}\boldsymbol{q}-\boldsymbol{q}) 
+\boldsymbol{\beta}\cdot \nabla (\Pi_{W}u -u) +c(\Pi_{W}u - u), 
\epsilon^{1/2}\nabla\cdot e_{\boldsymbol{q}} + \boldsymbol{\beta}\cdot \nabla e_{u} + c e_{u} \right)_{\Omega}\\
& \quad +\Sigma_{F\in \mathcal{E}_h^{\partial}} h_{F}^{-1}
\langle \left(\epsilon+\max (-\boldsymbol{\beta}\cdot \boldsymbol{n}(x),0)\right) (\Pi_{W}u - u), e_{u}\rangle_{F}.
\end{align*}

Since $\Pi_{\boldsymbol{Q}}$ is standard Raviart-Thomas projection, 
we have that $\nabla\cdot\Pi_{\boldsymbol{Q}}\boldsymbol{q}=P_{k+1,h}\nabla\cdot\boldsymbol{q}$ 
where $P_{k+1,h}$ is $L^{2}$ orthogonal projection onto $P_{k+1}(\mathcal{T}_{h})$. Then, we have
\begin{align}
\label{error_ineq}
& C_{0}^{2} \Vert (e_{\boldsymbol{q}}, e_{u})\Vert_{\mathcal{U}}^{2}\\
\nonumber
\leq  & \left(  (\Pi_{\boldsymbol{Q}}\boldsymbol{q}-\boldsymbol{q})+\epsilon^{1/2}\nabla (\Pi_{W}u -u), 
e_{\boldsymbol{q}}+\epsilon^{1/2}\nabla e_{u}\right)_{\Omega} \\
\nonumber
& \quad+\epsilon^{1/2}\left( P_{k+1,h}(\nabla\cdot\boldsymbol{q})-\nabla\cdot\boldsymbol{q}, 
\epsilon^{1/2}\nabla\cdot e_{\boldsymbol{q}} + \boldsymbol{\beta}\cdot \nabla e_{u} + c e_{u} \right)_{\Omega}\\
\nonumber
& \quad+\left(\boldsymbol{\beta}\cdot \nabla (\Pi_{W}u -u) +c(\Pi_{W}u - u), 
\epsilon^{1/2}\nabla\cdot e_{\boldsymbol{q}} + \boldsymbol{\beta}\cdot \nabla e_{u} + c e_{u} \right)_{\Omega}\\
\nonumber
& \quad +\Sigma_{F\in \mathcal{E}_h^{\partial}} h_{F}^{-1}
\langle \left(\epsilon+\max (-\boldsymbol{\beta}\cdot \boldsymbol{n}(x),0)\right) (\Pi_{W}u - u), e_{u}\rangle_{F}.
\end{align}
In order to have (\ref{convergence_rate_global_refined}), we only need to show that 
\begin{align}
\label{ineq_refined}
& \epsilon^{1/2}\left( P_{k+1,h}(\nabla\cdot\boldsymbol{q})-\nabla\cdot\boldsymbol{q}, 
\epsilon^{1/2}\nabla\cdot e_{\boldsymbol{q}} + \boldsymbol{\beta}\cdot \nabla e_{u} + c e_{u} \right)_{\Omega}\\
\nonumber
\leq & C h^{k+1}\Vert (e_{\boldsymbol{q}}, e_{u})\Vert_{\mathcal{U}}\cdot \Vert u\Vert_{H^{k+2}(\Omega)}.
\end{align}
Since $\nabla\cdot e_{\boldsymbol{q}}\in P_{k+1}(\mathcal{T}_{h})$, we have 
\begin{align*}
\epsilon^{1/2}\left( P_{k+1,h}(\nabla\cdot\boldsymbol{q})-\nabla\cdot\boldsymbol{q}, 
\epsilon^{1/2}\nabla\cdot e_{\boldsymbol{q}}\right)_{\Omega}=0.
\end{align*}
We define $\boldsymbol{P}_{1,h}$ and $P_{0,h}$ to be $L^{2}$ orthogonal projections onto $P_{1}(\mathcal{T}_{h};\mathbb{R}^{d})$ and 
$P_{0}(\mathcal{T}_{h})$, respectively. Then, we have
\begin{align*}
& \epsilon^{1/2}\left( P_{k+1,h}(\nabla\cdot\boldsymbol{q})-\nabla\cdot\boldsymbol{q}, 
\epsilon^{1/2}\nabla\cdot e_{\boldsymbol{q}} + \boldsymbol{\beta}\cdot \nabla e_{u} + c e_{u} \right)_{\Omega}\\
= & \epsilon^{1/2}\left( P_{k+1,h}(\nabla\cdot\boldsymbol{q})-\nabla\cdot\boldsymbol{q}, 
( \boldsymbol{\beta}-\boldsymbol{P}_{1,h}\boldsymbol{\beta})\cdot \nabla e_{u} + (c - P_{0,h}c) e_{u} \right)_{\Omega}.
\end{align*} 
By approximation properties of $\boldsymbol{P}_{1,h}$ and $P_{0,h}$, we have that for any $K\in\mathcal{T}_{h}$,
\begin{align*}
\Vert \boldsymbol{\beta}-\boldsymbol{P}_{1,h}\boldsymbol{\beta} \Vert_{L^{\infty}(K)}\leq C h_{K}^{2} 
\Vert \boldsymbol{\beta}\Vert_{W^{2,\infty}(K)}, 
\quad \Vert c - P_{0,h}c\Vert_{L^{\infty}(K)} \leq C h_{K} \Vert c\Vert_{W^{1,\infty}(K)}.
\end{align*}
Then, by inverse inequality on each element, it is easy to see that (\ref{ineq_refined}) holds under the assumption 
$\boldsymbol{\beta}\in W^{2,\infty}(\mathcal{T}_{h})$ and $c\in W^{1,\infty}(\mathcal{T}_{h})$. 
So, we can conclude that (\ref{convergence_rate_global_refined}) is true.
\end{proof}

\section{Estimate of condition number}
In this section, we prove Theorem~\ref{MainTh3}.

\begin{proof}(Proof of Theorem~\ref{MainTh3}) 
Given $(\boldsymbol{p},w)\in \mathcal{U}_{h}$, we define
\begin{align*}
\theta = & \left( \boldsymbol{p}+\epsilon^{1/2}\nabla w, \boldsymbol{p}+\epsilon^{1/2}\nabla w\right)_{\Omega} \\
& \quad+ \left( \epsilon^{1/2}\nabla\cdot \boldsymbol{p} + \boldsymbol{\beta}\cdot \nabla w + c w, 
\epsilon^{1/2}\nabla\cdot \boldsymbol{p} + \boldsymbol{\beta}\cdot \nabla w + c w \right)_{\Omega}\\
& \quad +\Sigma_{F\in \mathcal{E}_h^{\partial}} h_{F}^{-1}\langle \left(\epsilon
+\max (-\boldsymbol{\beta}\cdot \boldsymbol{n}(x),0)\right)w, w\rangle_{F}.
\end{align*}

Since we assume that meshes are quasi-uniform, then by trace inequality and inverse inequality, 
\begin{align*}
\theta \leq C_{1} h^{-2}\left( \Vert \boldsymbol{p}\Vert_{L^{2}(\Omega)}^{2}+ \Vert w\Vert_{L^{2}(\Omega)}^{2} \right).
\end{align*}
By Lemma~\ref{lemma_norm_equivalent}, we have 
\begin{align*}
C_{0}^{2}\left(\Vert \boldsymbol{p}\Vert_{L^{2}(\Omega)}^{2}+ \Vert w\Vert_{L^{2}(\Omega)}^{2}\right) \leq  \theta.
\end{align*}
So, we can conclude that the proof is complete.
\end{proof}

\section{Numerical results}

In this section, we present numerical studies for some model problems in two dimensional domain. Our test problems are the same to 
those four studied in \cite{AyusoMarini:cdf}. We fix the domain to be the unit square in all the experiments. In order to illustrate the 
effect of weakly imposing Dirichlet boundary condition 
in (\ref{ls_formulation}), we compare numerical results of first order least squares method (\ref{ls_formulation}) 
with those of (\ref{ls_formulation_strong}), which strongly imposes boundary condition. we denote the first order least 
method (\ref{ls_formulation})  as LS--weak, and the first order least squares method (\ref{ls_formulation_strong}) as LS--strong. 
We use unstructured quasi-uniform meshes in all computations.

\subsection{A smooth solution test}
We take $\boldsymbol{\beta} = [1, 1]^T$, 
and choose the diffusion coefficient $\epsilon = 1, 10^{-3}, 10^{-9}$. The source term $f$ is chosen such that
the exact solution $ u(x,y) = \sin(2\pi\, x)\sin(2\pi\,y)$.

Fig.~\ref{smooth_test} shows the $L^2$ convergence 
results for $u_h$ for LS-weak. Actually, that optimal convergence rates are obtained, which is better than 
the theoretical result in Theorem \ref{MainTh1}. The convergence rates of LS-strong is the same.

\begin{figure}
\hfill{}\includegraphics[width=0.32\textwidth]{./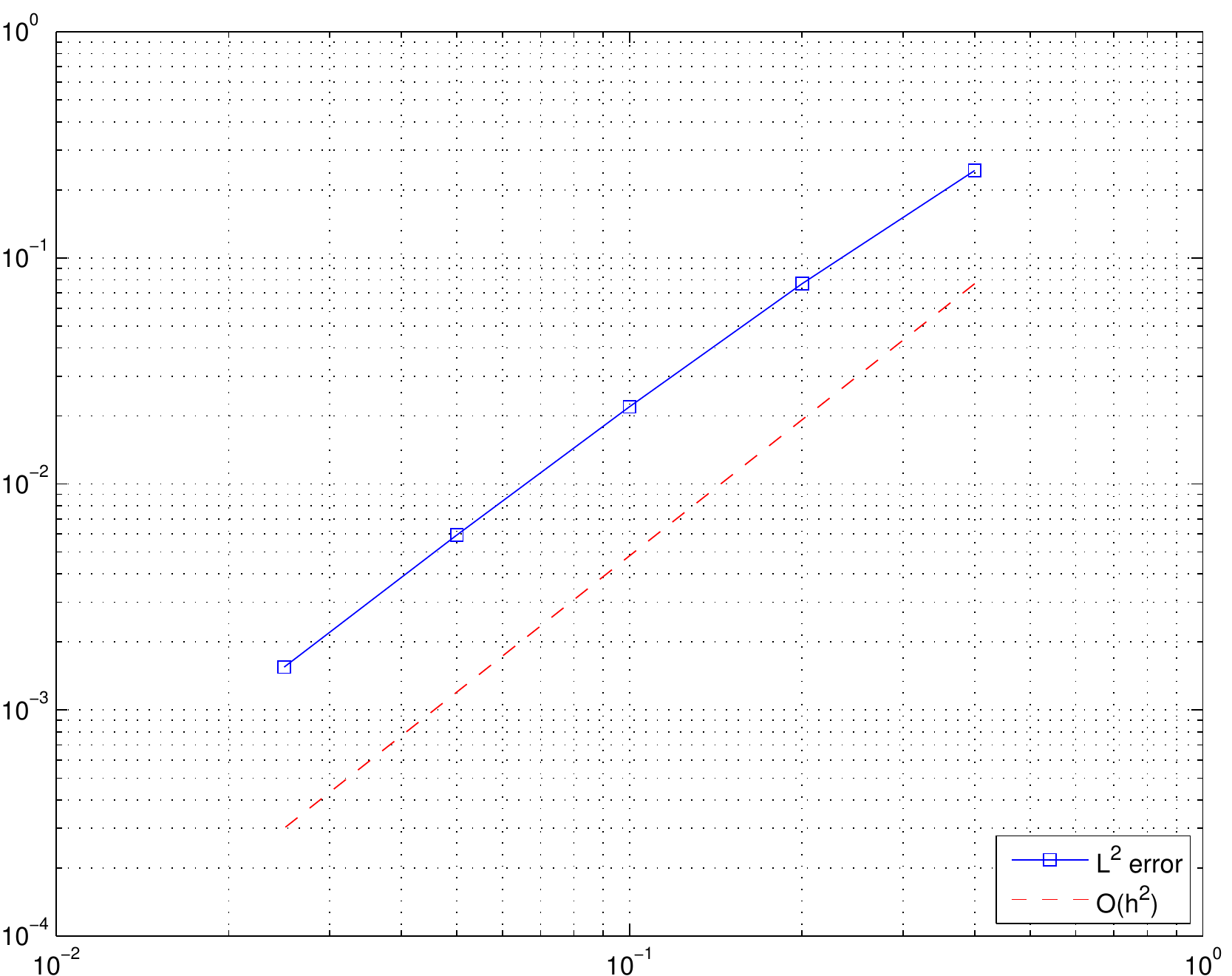}
\hfill{}\includegraphics[width=0.32\textwidth]{./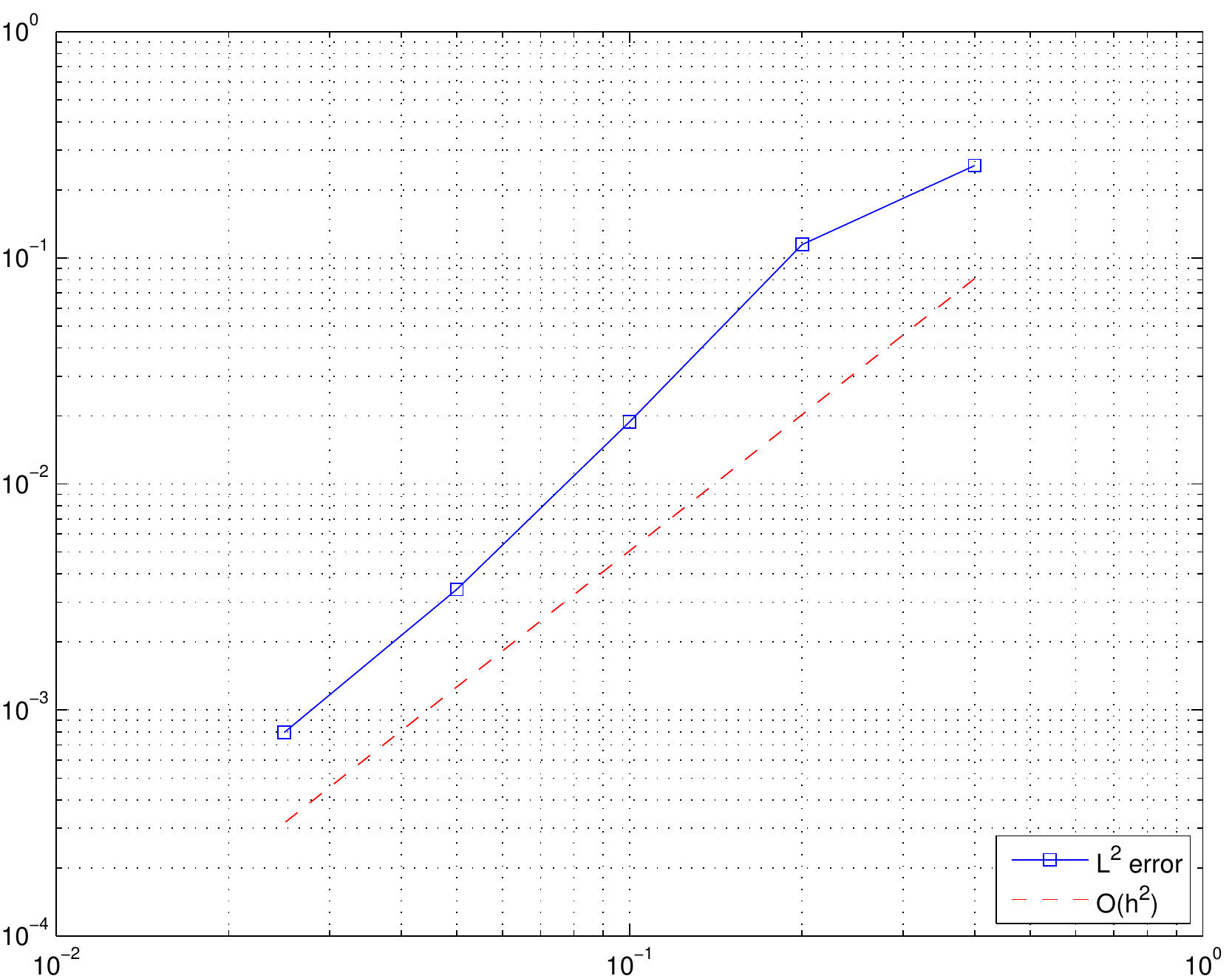}
\hfill{}\includegraphics[width=0.32\textwidth]{./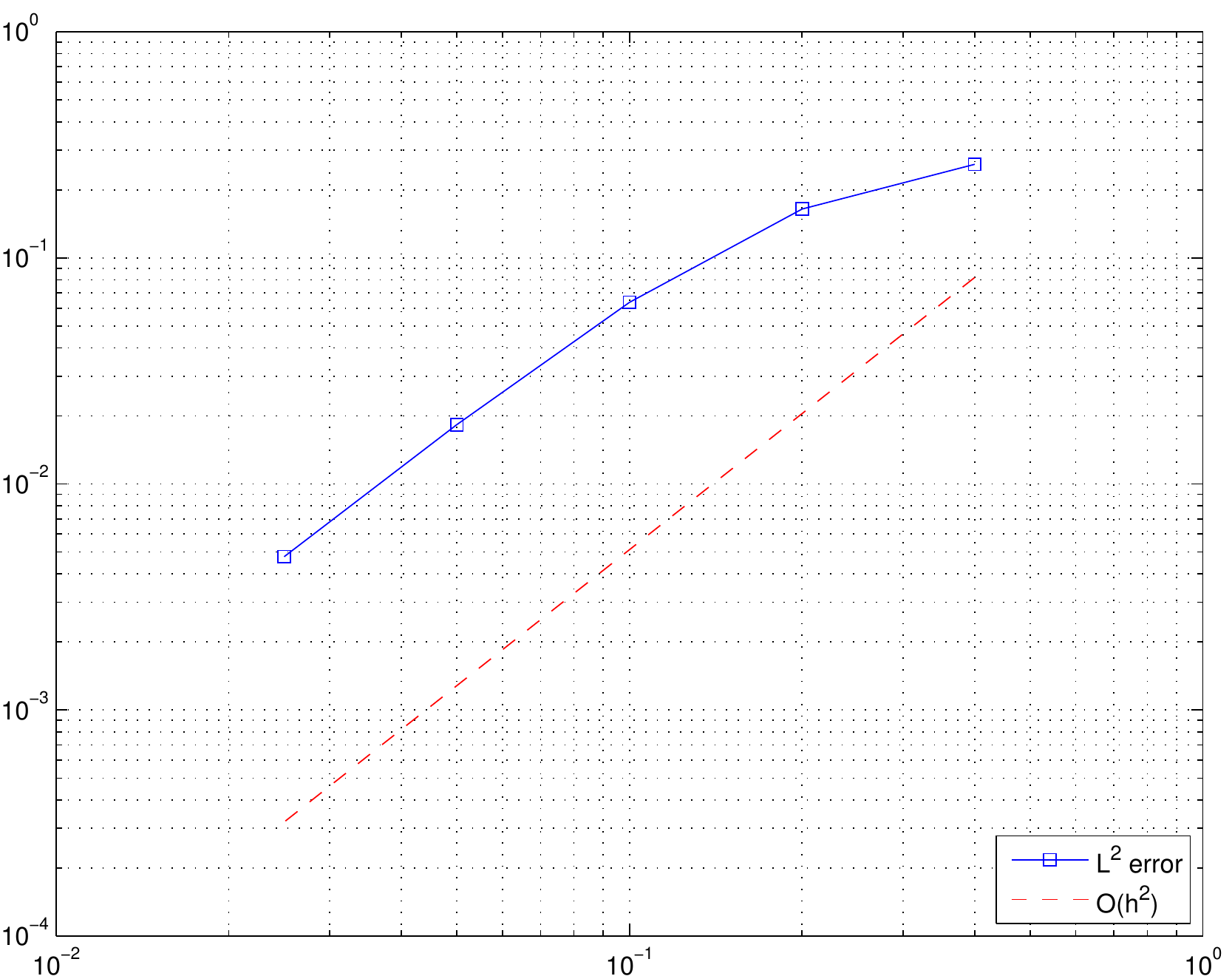}\hfill{}

\hfill{}\includegraphics[width=0.32\textwidth]{./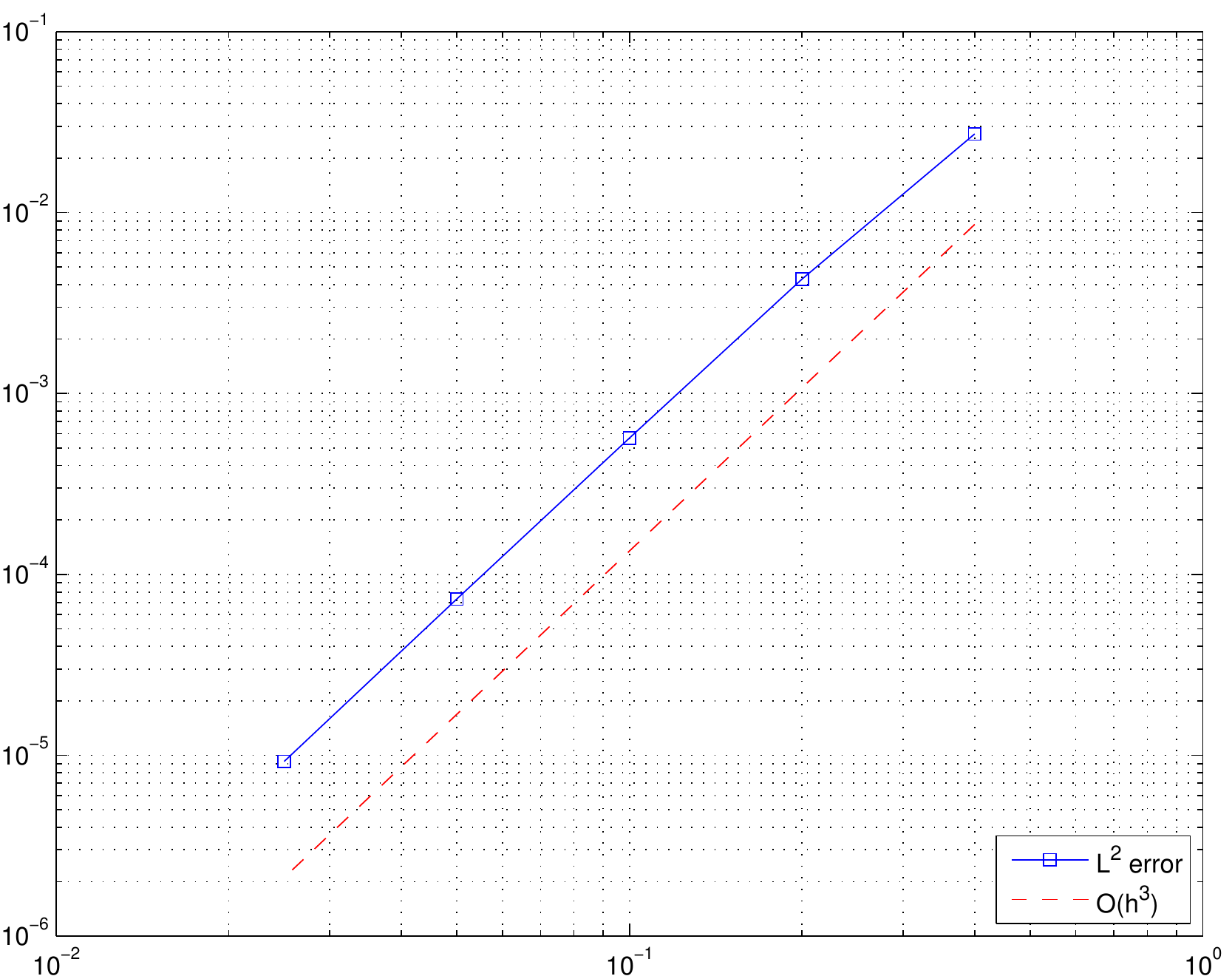}
\hfill{}\includegraphics[width=0.32\textwidth]{./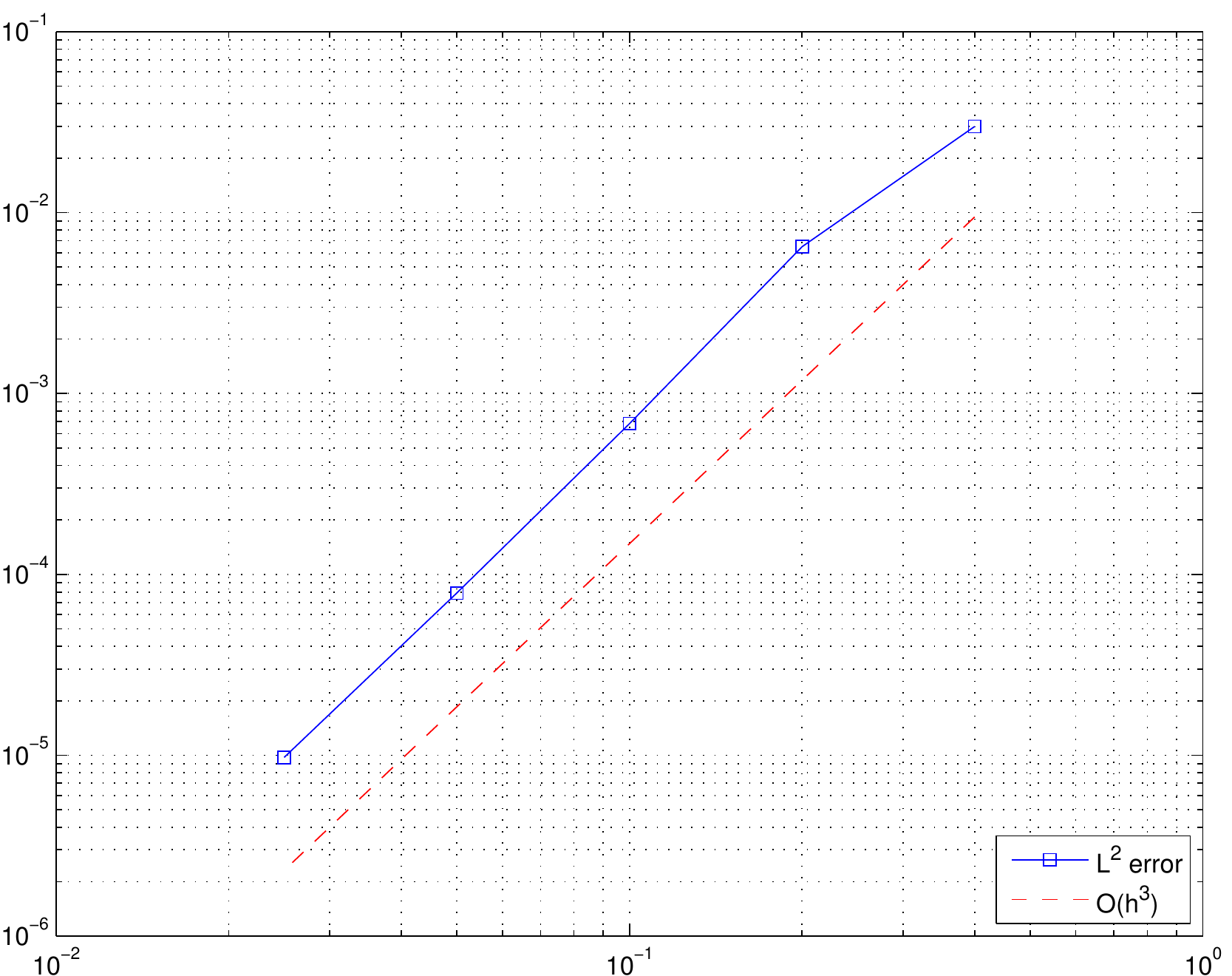}
\hfill{}\includegraphics[width=0.32\textwidth]{./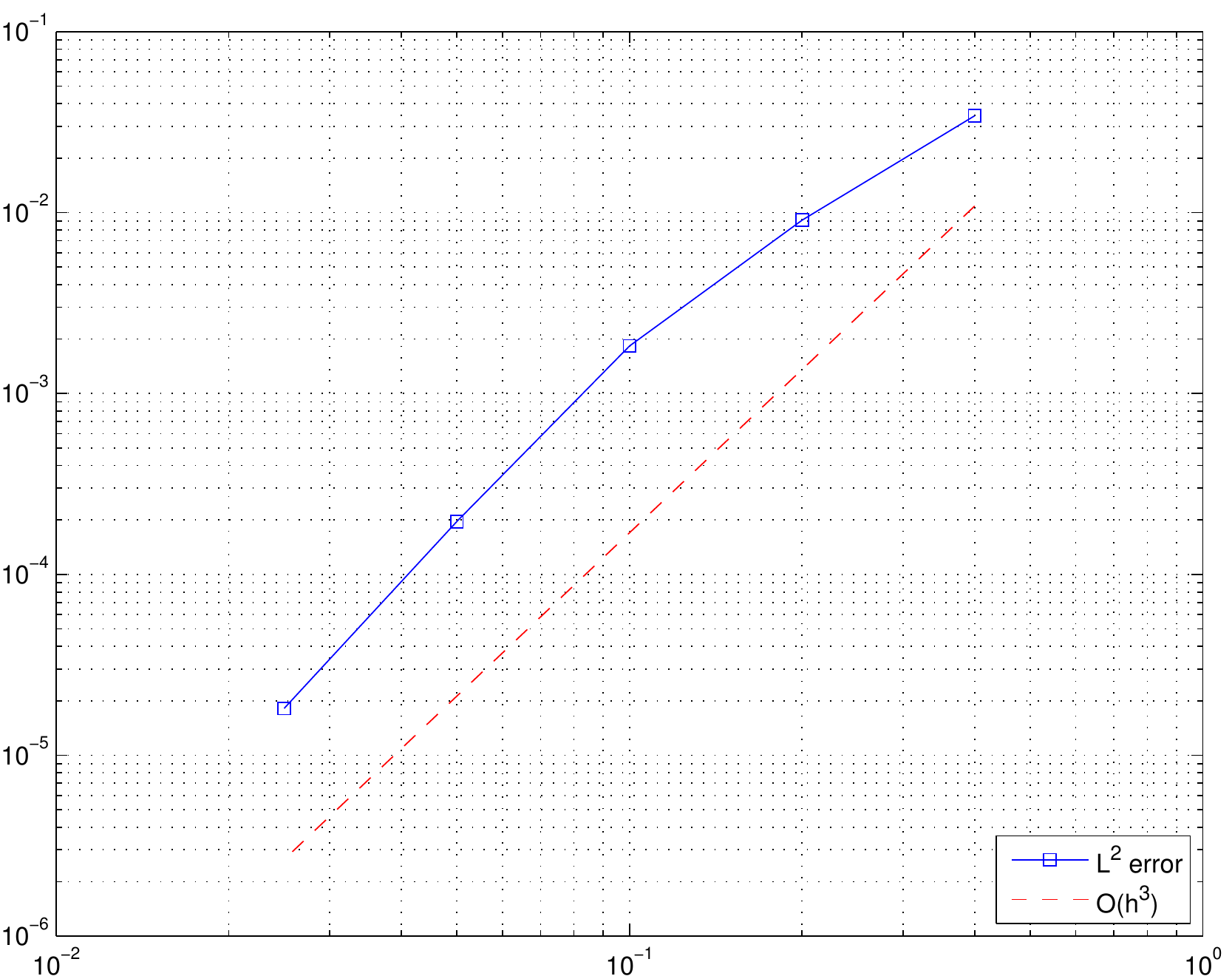}\hfill{}

\hfill{}\includegraphics[width=0.32\textwidth]{./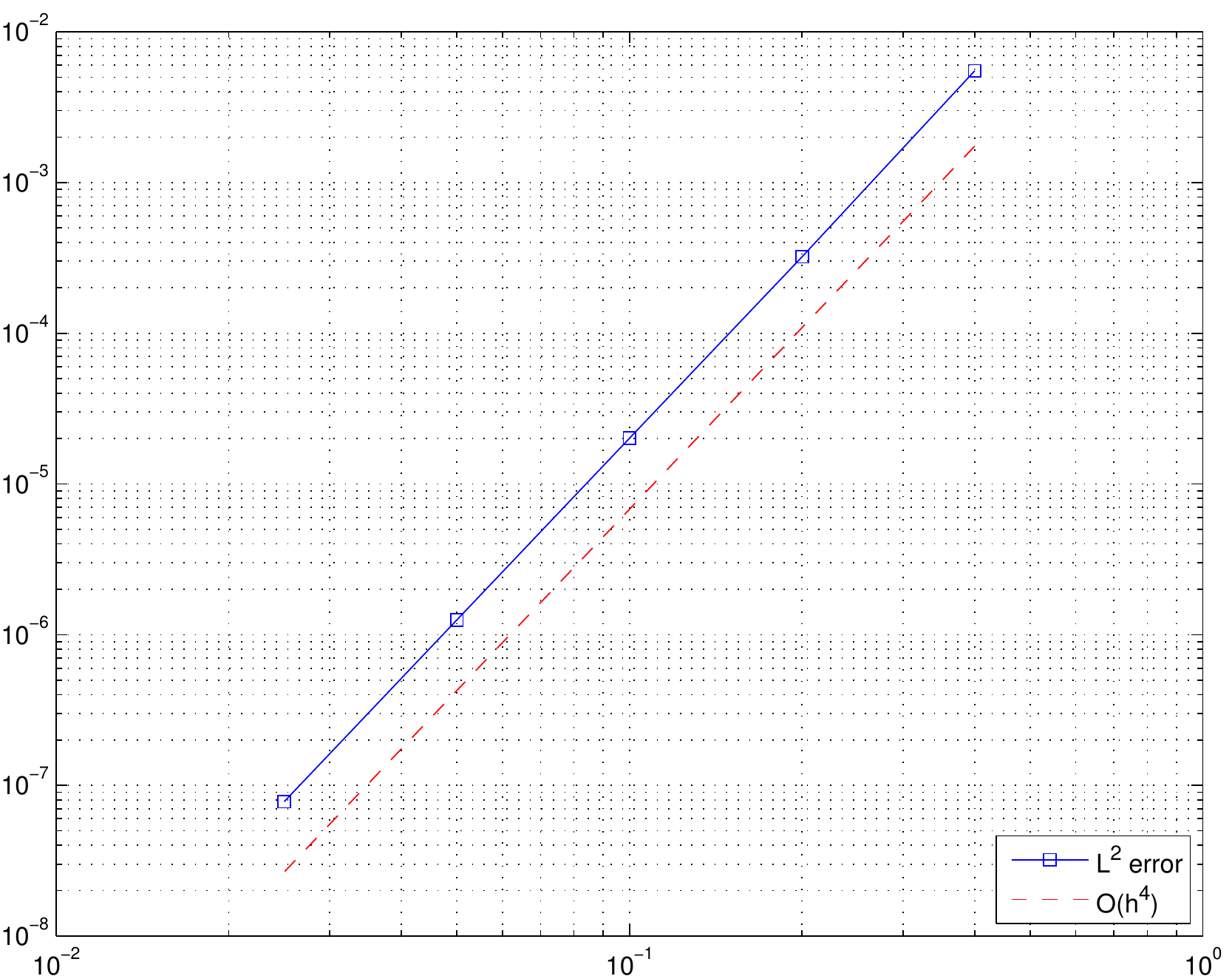}
\hfill{}\includegraphics[width=0.32\textwidth]{./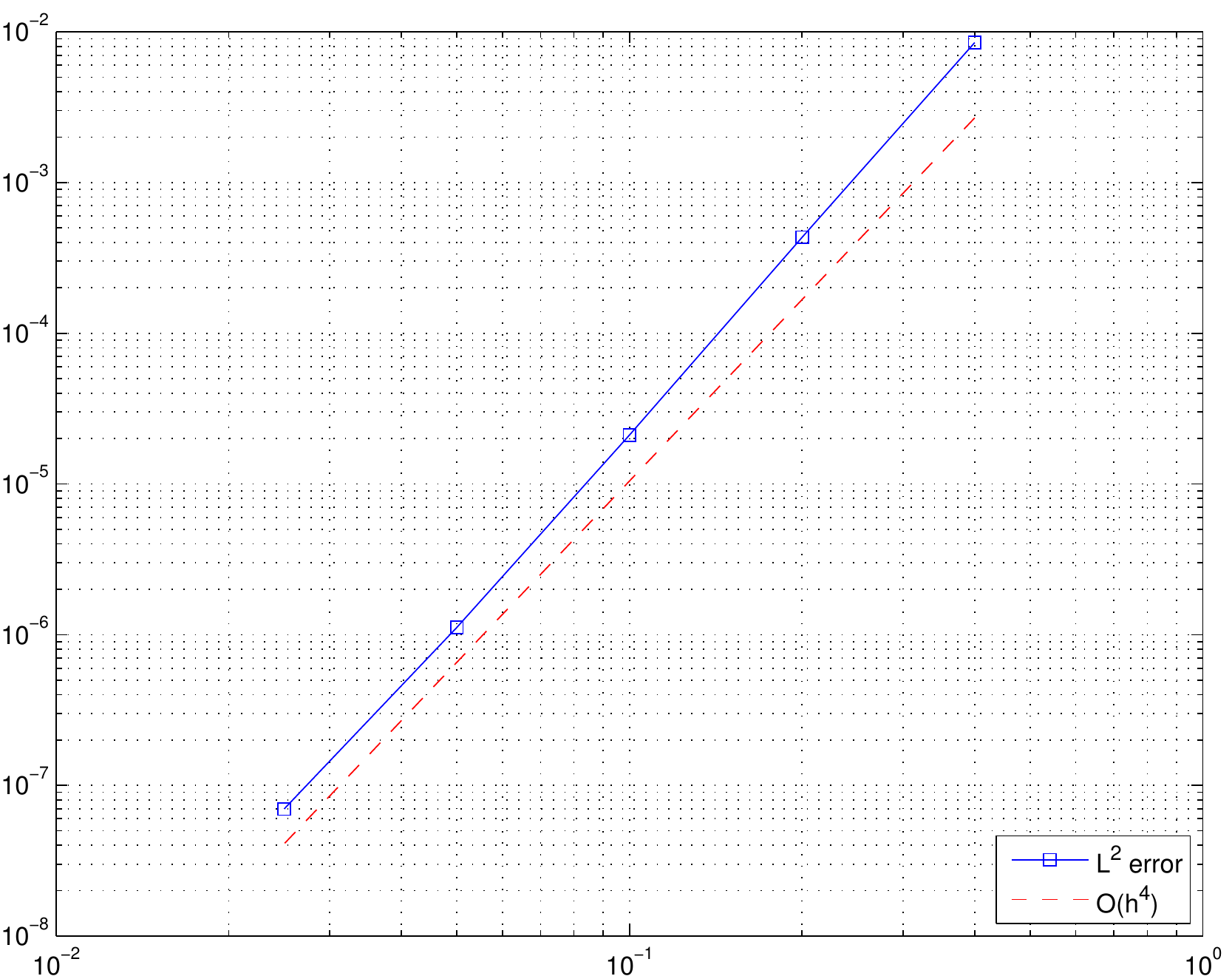}
\hfill{}\includegraphics[width=0.32\textwidth]{./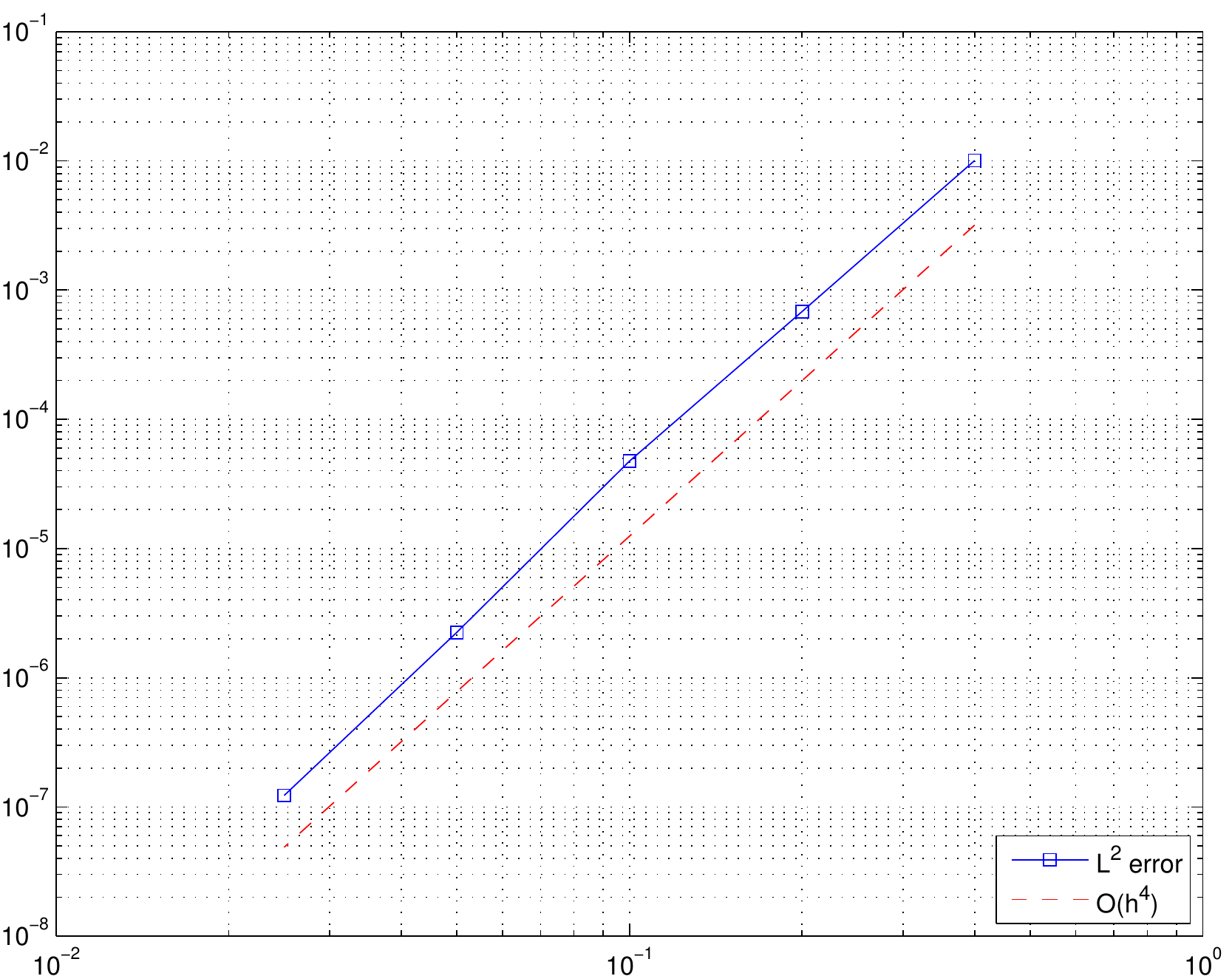}\hfill{}

\caption{Convergence test for a smooth solution. Left--Right: $\varepsilon=1$,
$10^{-3}$ and $10^{-9}$. Top--Bottom: $P1$--$P3$.}

\label{smooth_test}
\end{figure}

\subsection{A rotating flow test}
We take $\epsilon = 10^{-6}$, $\boldsymbol{\beta} = [y - 1/2, 1/2 - x]^T$, and  $f = 0$. 
The solution $u$ is prescribed along the slip $1/2\times [0, 1/2]$ by
\[
 u(1/2,y) = \sin^2(2\pi\,y)\qquad y \in [0, 1/2]. 
\]
We refer to \cite{HughesScovazziBochevBuffa2006} for a detailed description of this test.
In Fig.~\ref{rotating1}, we plot $u_h$ obtained from the two first order least squares methods 
for various polynomial degrees in an unstructured triangular grid of 592 elements. 
Notice that both methods produce similar results, and there is a significant improvement of the result from $P$1 to $P$2.
In addition, the use of higher polynomial degree leads to smoother approximations, see Fig.~\ref{rotating3} for a comparison
between different polynomial degrees.

\begin{figure}[ht!]
\centering
\includegraphics[width = 0.4\textwidth]{./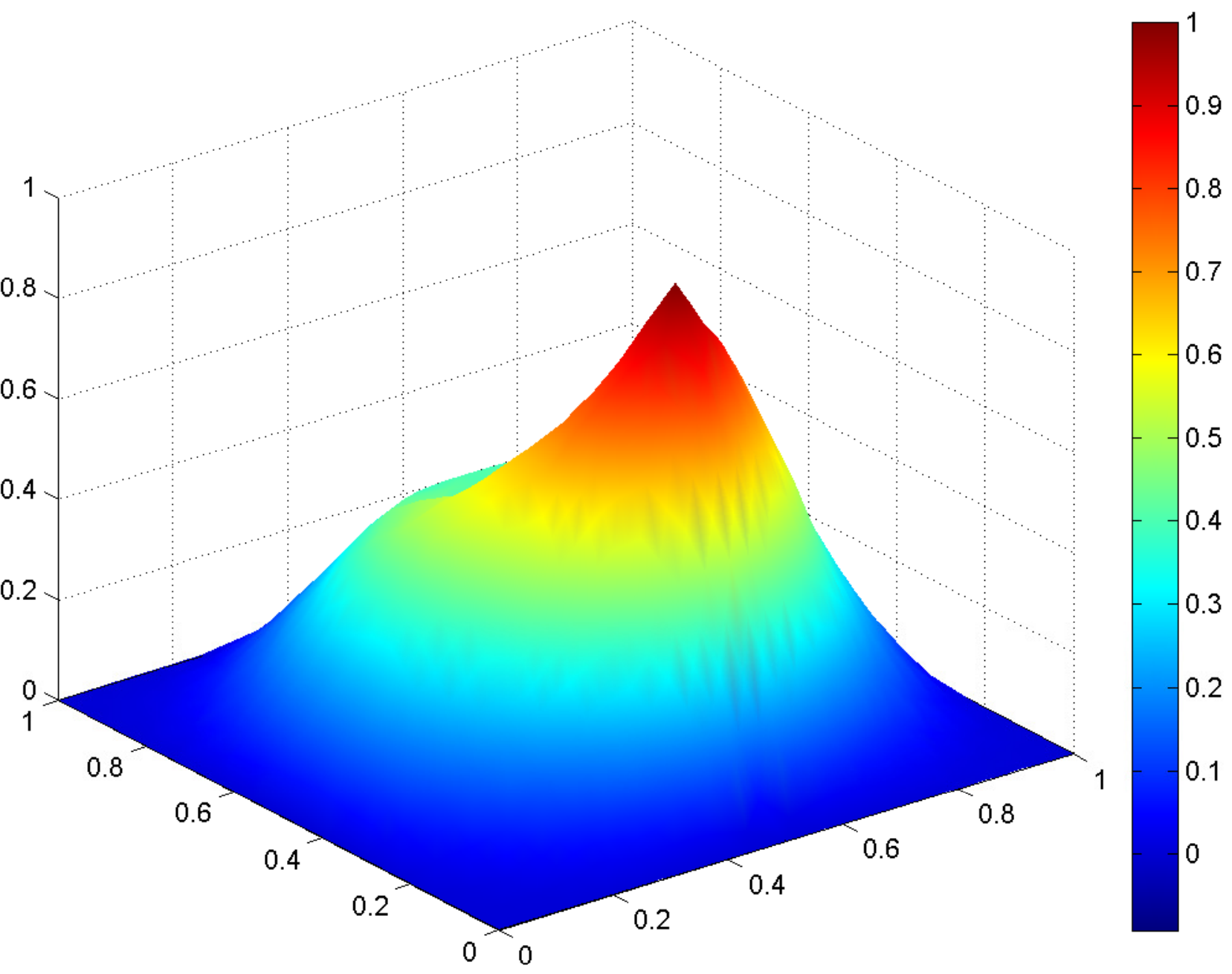}
\includegraphics[width = 0.4\textwidth]{./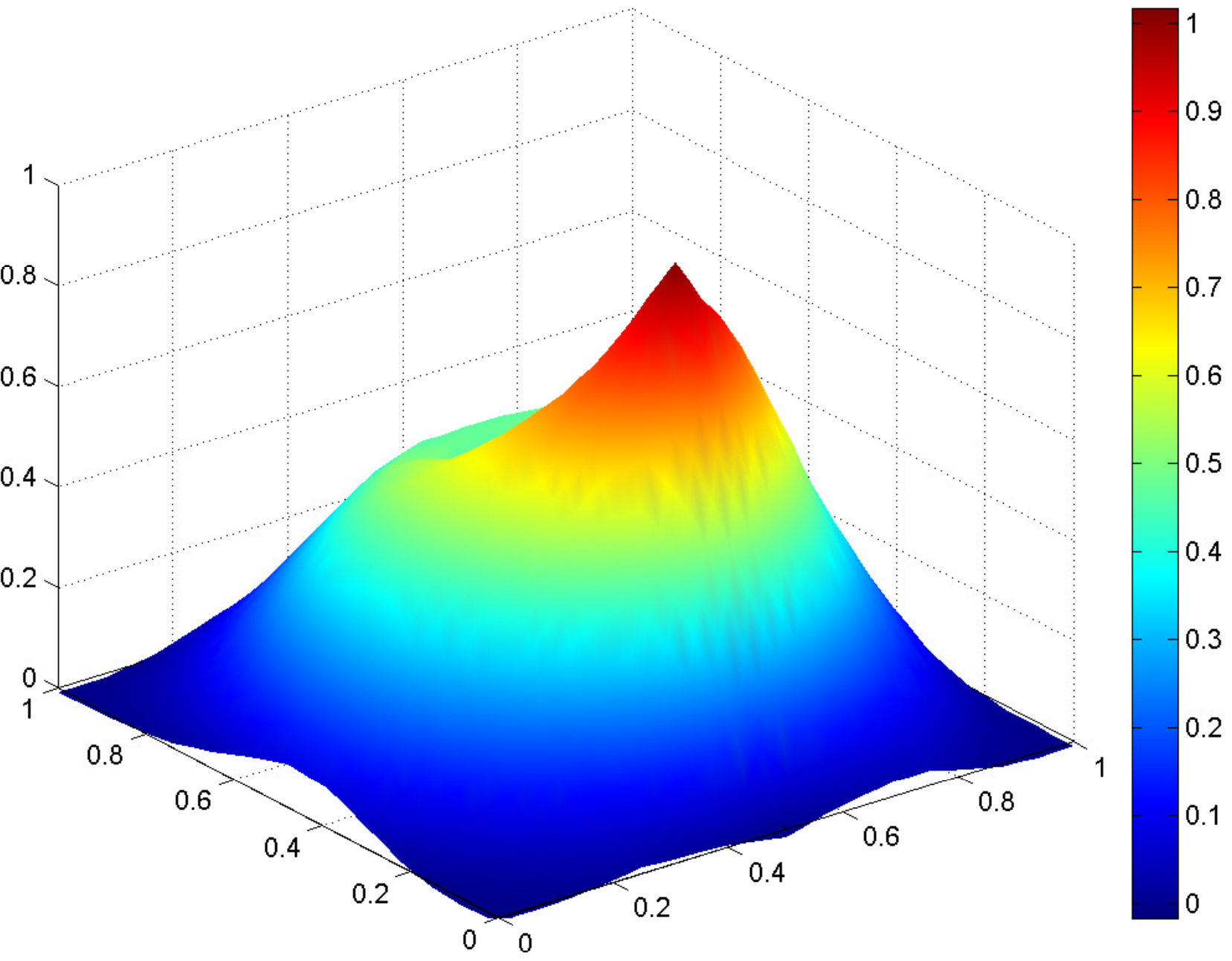}
\includegraphics[width = 0.4\textwidth]{./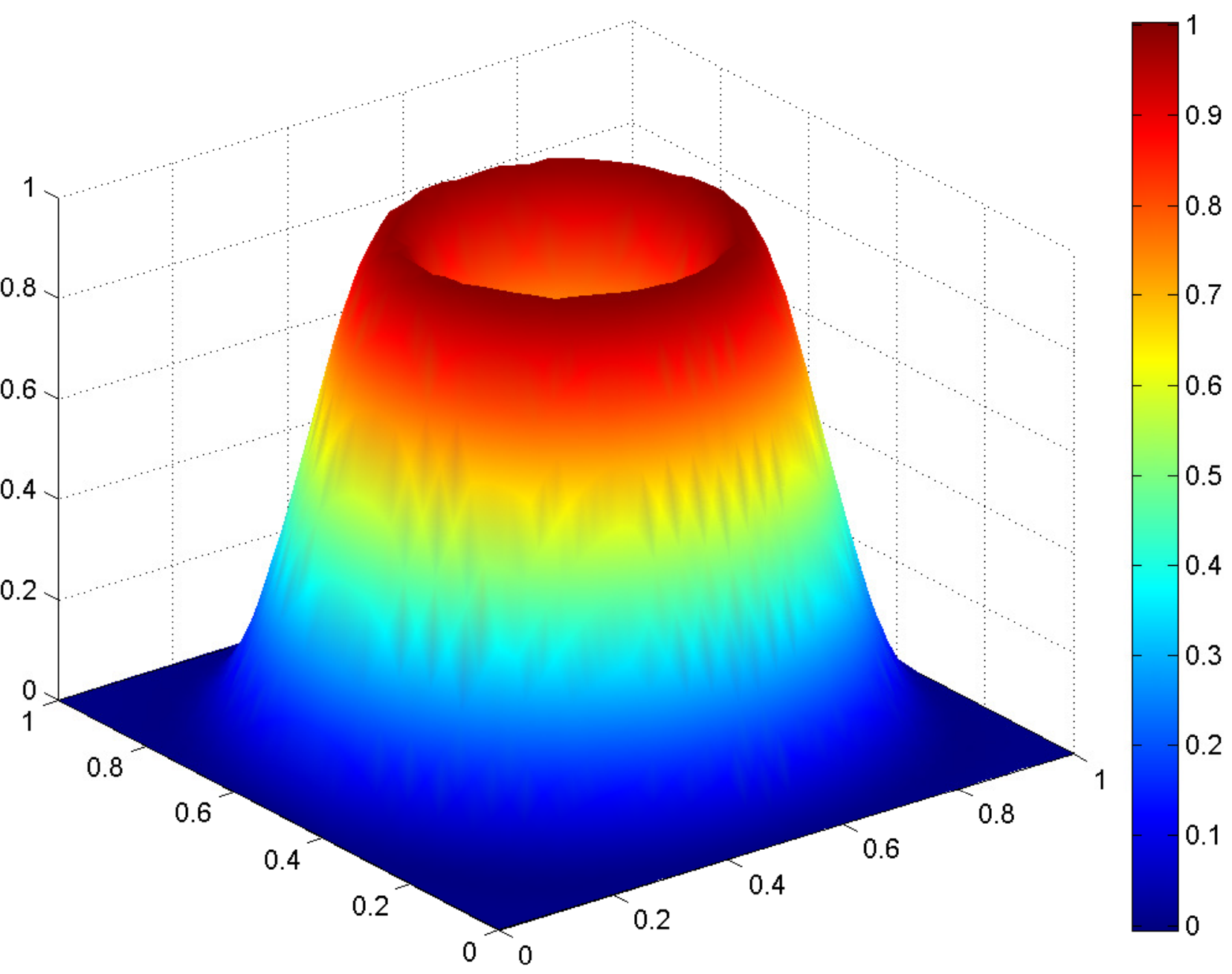}
\includegraphics[width = 0.4\textwidth]{./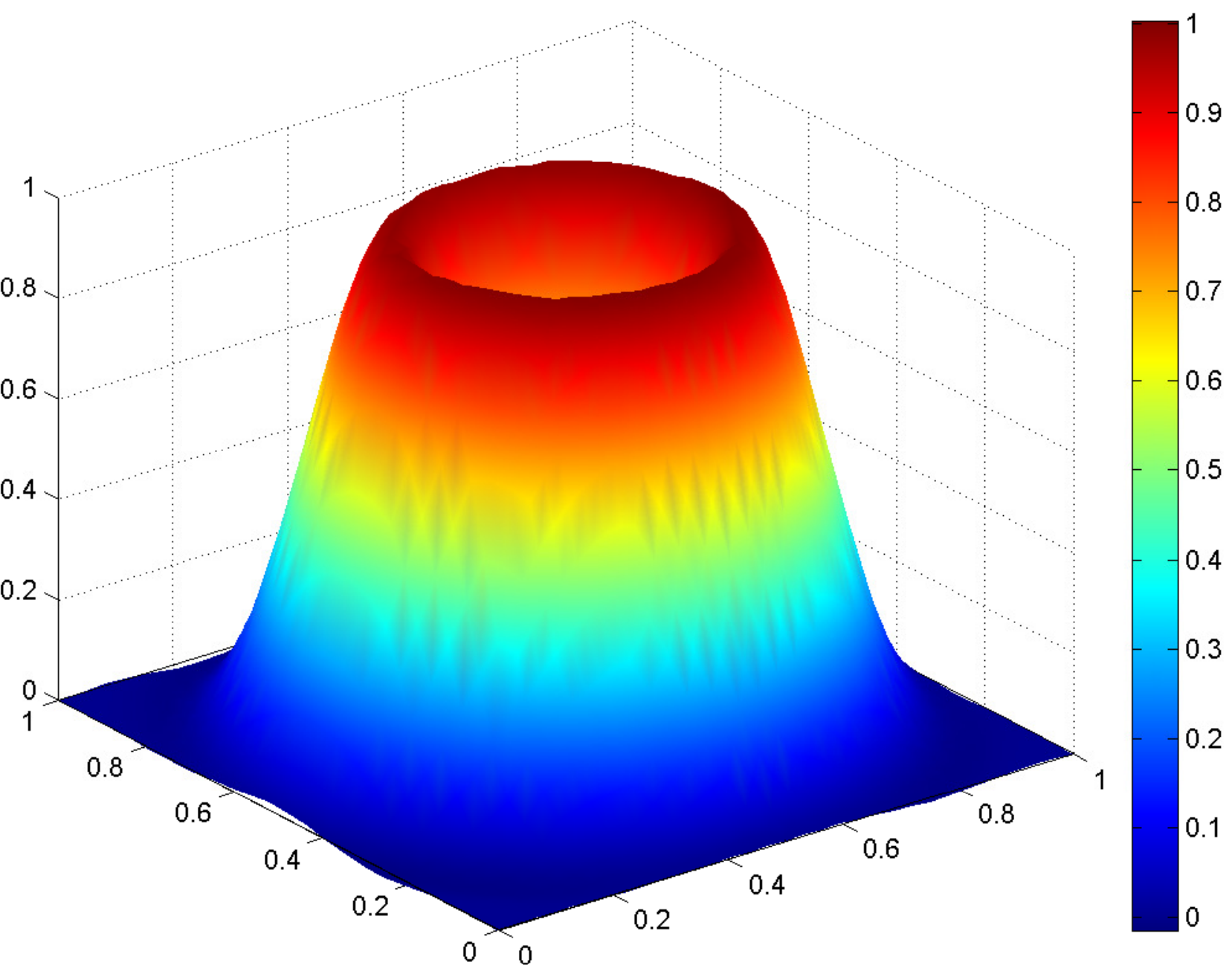}
\includegraphics[width = 0.4\textwidth]{./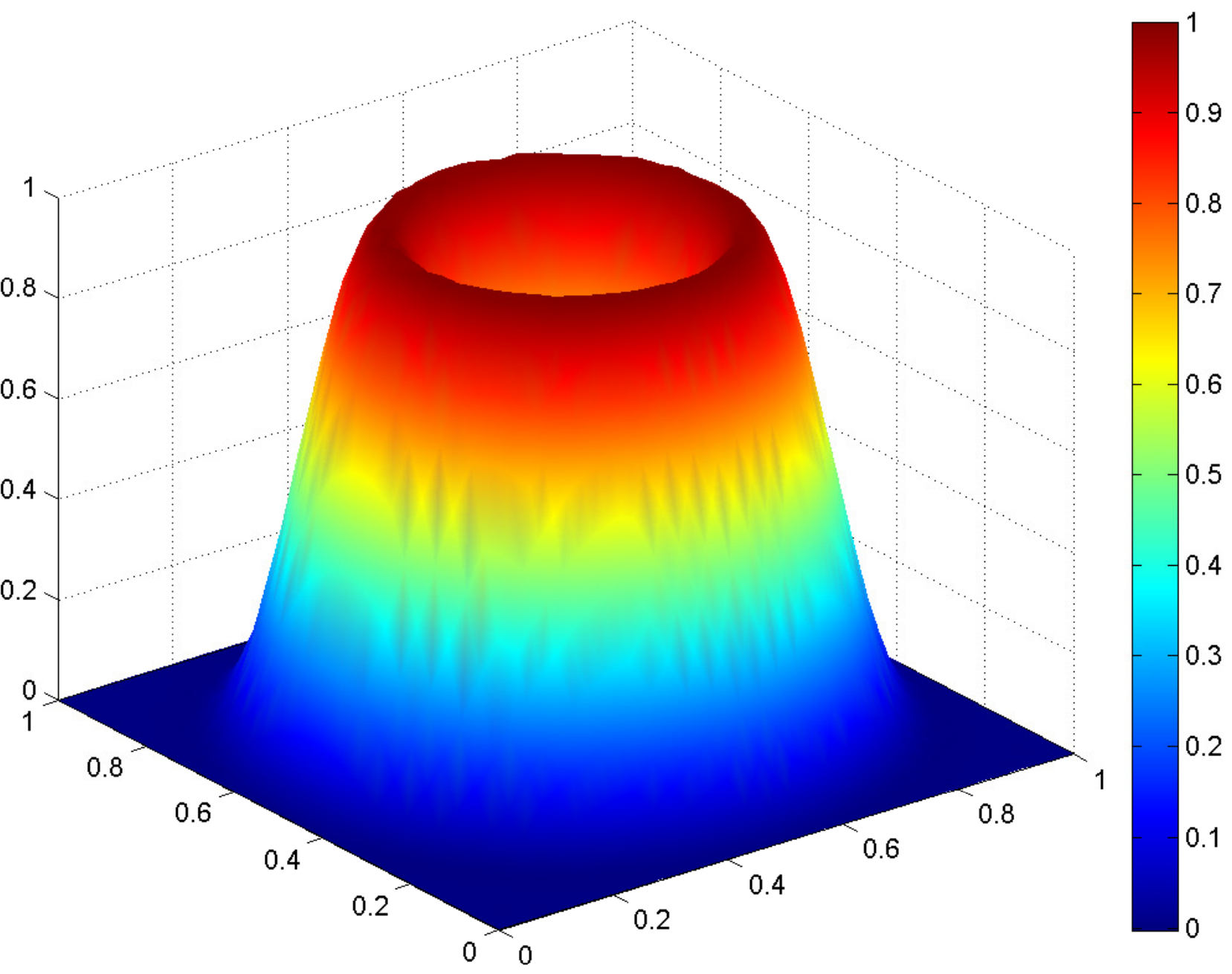}
\includegraphics[width = 0.4\textwidth]{./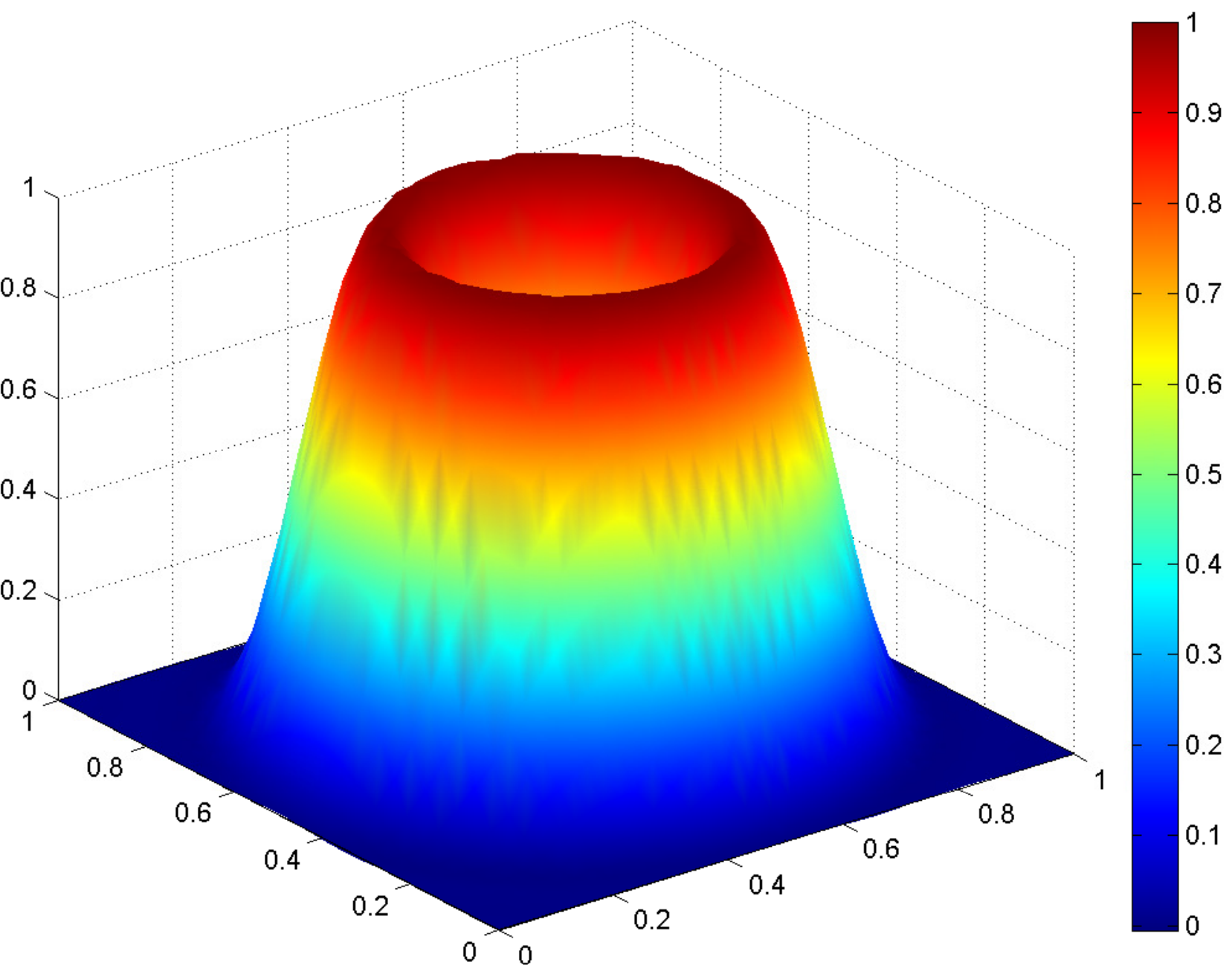}
\caption{3D plot of $u_h$ for rotating flow test with $\epsilon = 10^{-6}$ in 592 elements. Left--Right: LS--strong, LS--weak.
Top--Bottom: $P1$--$P3$.}
\label{rotating1} 
\end{figure}

\begin{figure}
\hfill{}\includegraphics[width=0.4\textwidth]{./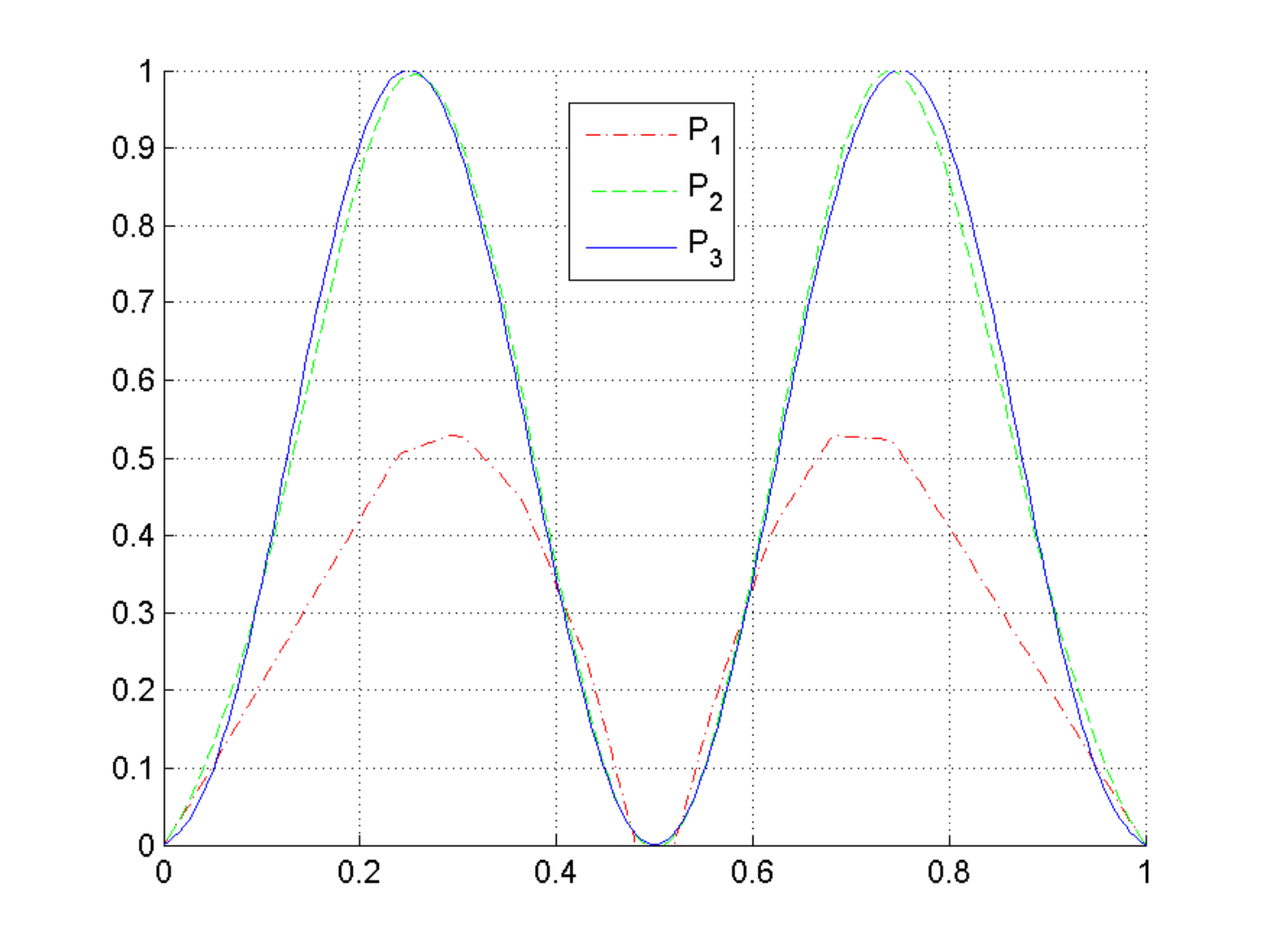}
\hfill{}\includegraphics[width=0.4\textwidth]{./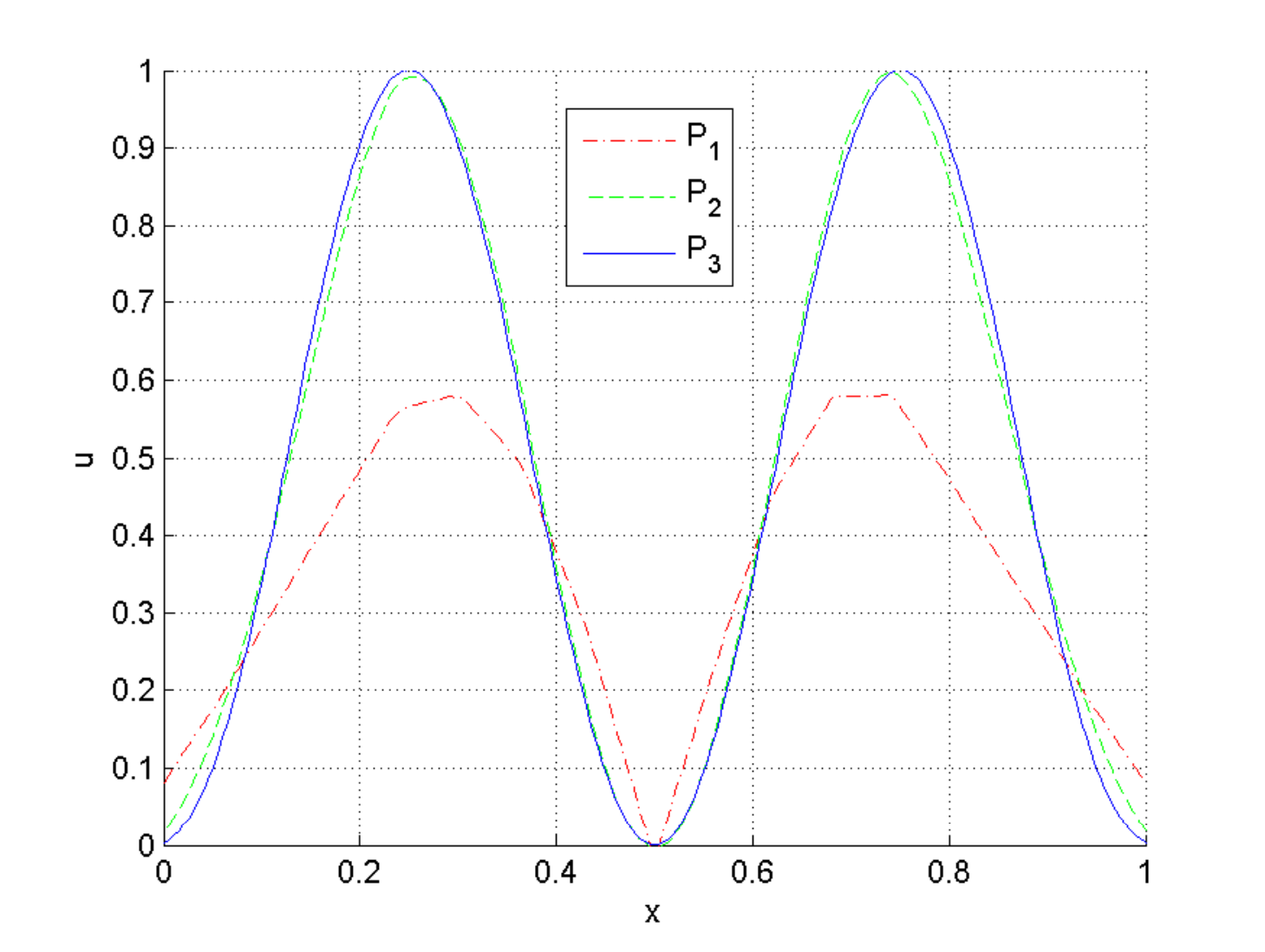}
\hfill{}
\caption{Profile of the approximate solution at $y=1/2$ when $\varepsilon=10^{-9}$. Left--Right: LS--strong, LS--weak.}
\label{rotating3}
\end{figure}

\subsection{An interior layer test}
We take $\boldsymbol{\beta} = [1/2, \sqrt{3}/2]^T$, $f = 0$, and Dirichlet boundary conditions as follows:
\begin{align*}
 u = \left\{\begin{tabular}{l l}
            $1$& { on }$\{y = 0, 0\le x\le 1\}$,\\
            $1$& { on }$\{x = 0, 0\le y\le 1/5\}$,\\
            $0$& { elsewhere. }
           \end{tabular}
\right.
\end{align*}
In Fig.~\ref{interior31} and Fig.~\ref{interior91},
we plot $u_{h}$ obtained from the two first order least squares methods for $\epsilon = 10^{-3}$ and $\epsilon = 10^{-9}$, respectively.
Though the exact solution of this example is not available, it is easy to see that the result produced by LS-weak is much more accurate.
Fig.~\ref{interior31} and Fig.~\ref{interior91} imply that $u_{h}$ produced by LS-strong in 11264 element is almost totally collapsed, 
while LS-weak can have much better approximation in a much coarser mesh with 704 elements.

In order to illustrate the behavior of LS-weak in capturing the interior layers, in Fig.~\ref{interior_contour},
we plot the contour plot of LS-weak using $P$2 for three consecutive meshes.  We observe finer mesh leads to sharper layer width.

\begin{figure}[ht!]
\centering
\includegraphics[width = 0.4\textwidth]{./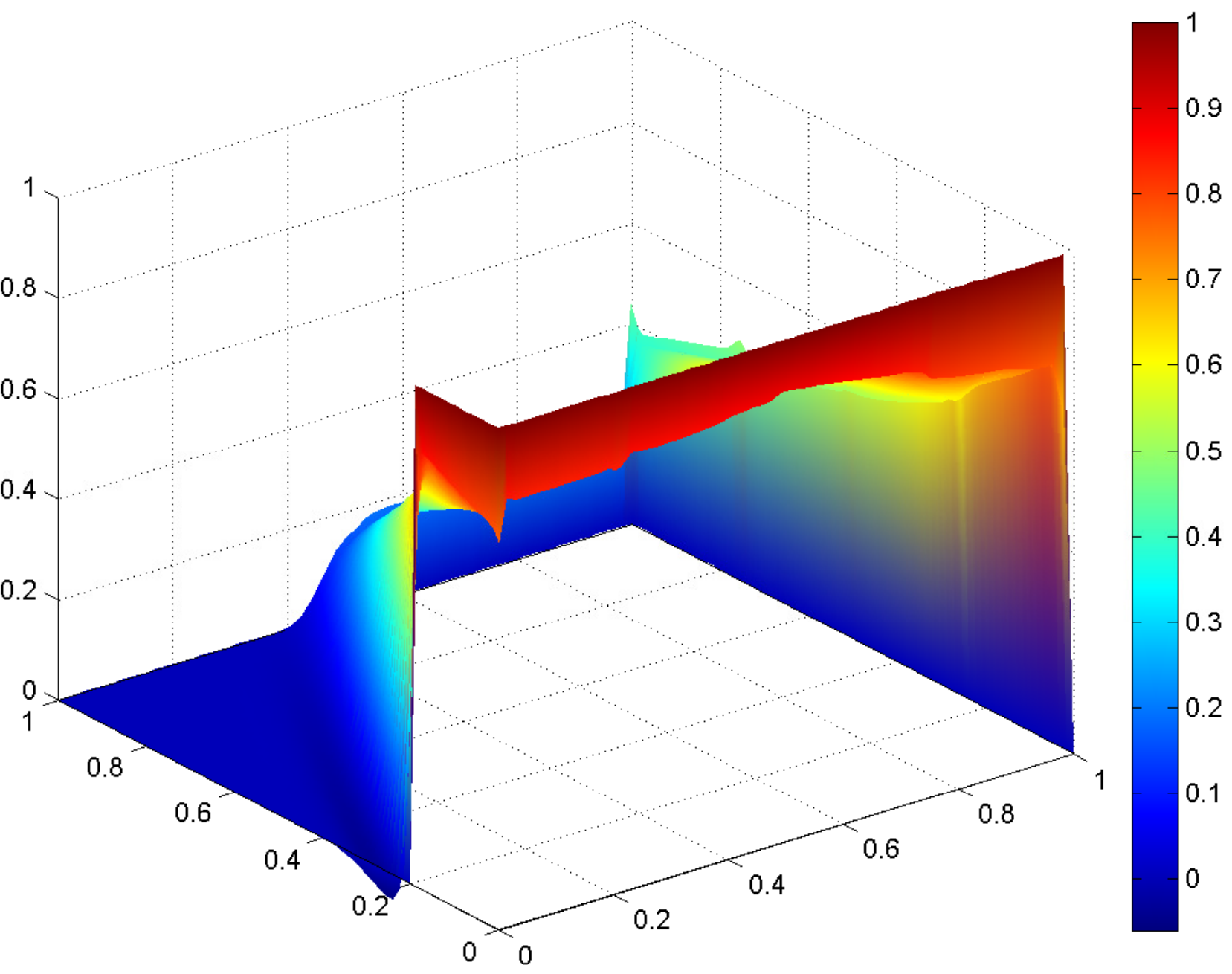}
\includegraphics[width = 0.4\textwidth]{./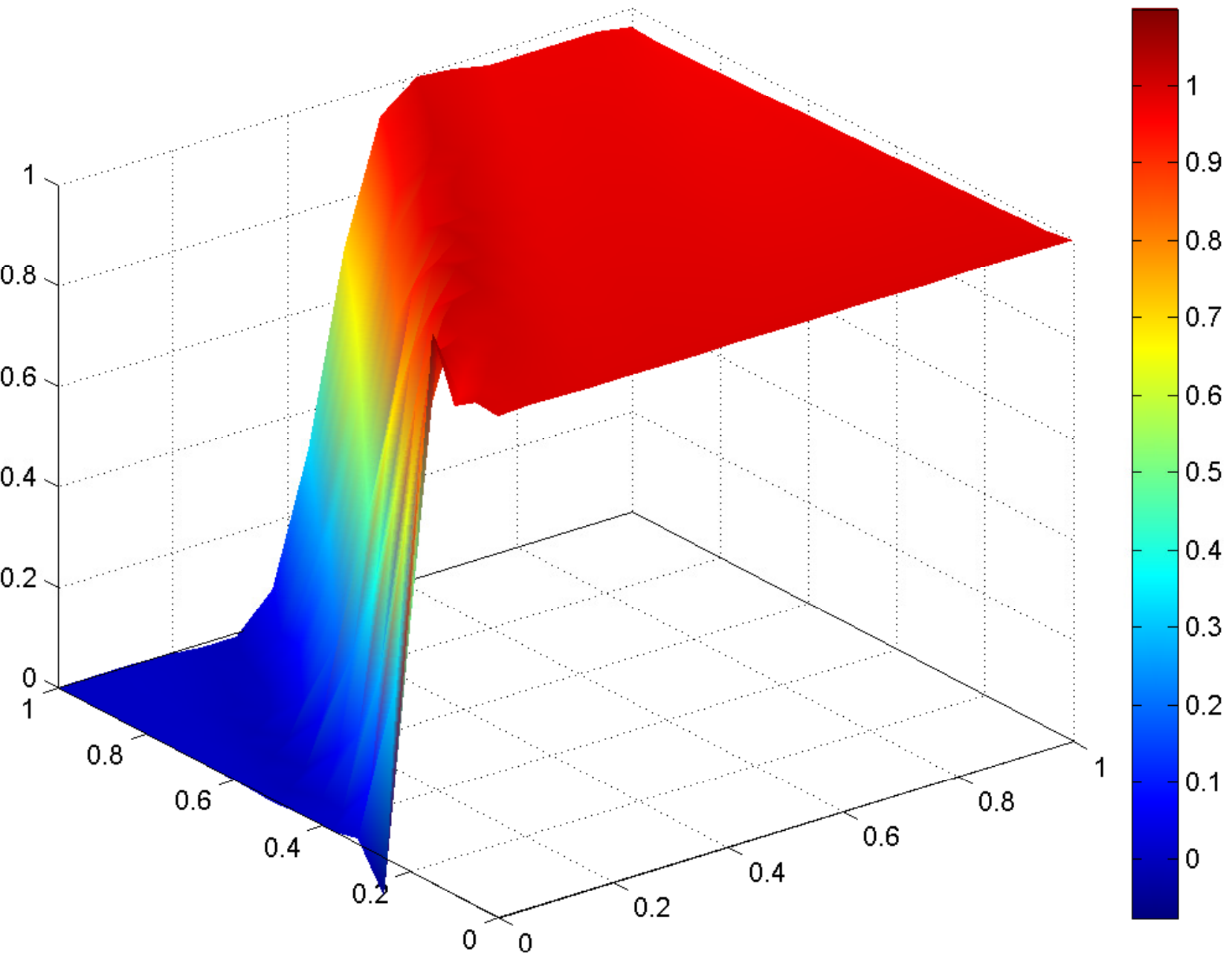}
\includegraphics[width = 0.4\textwidth]{./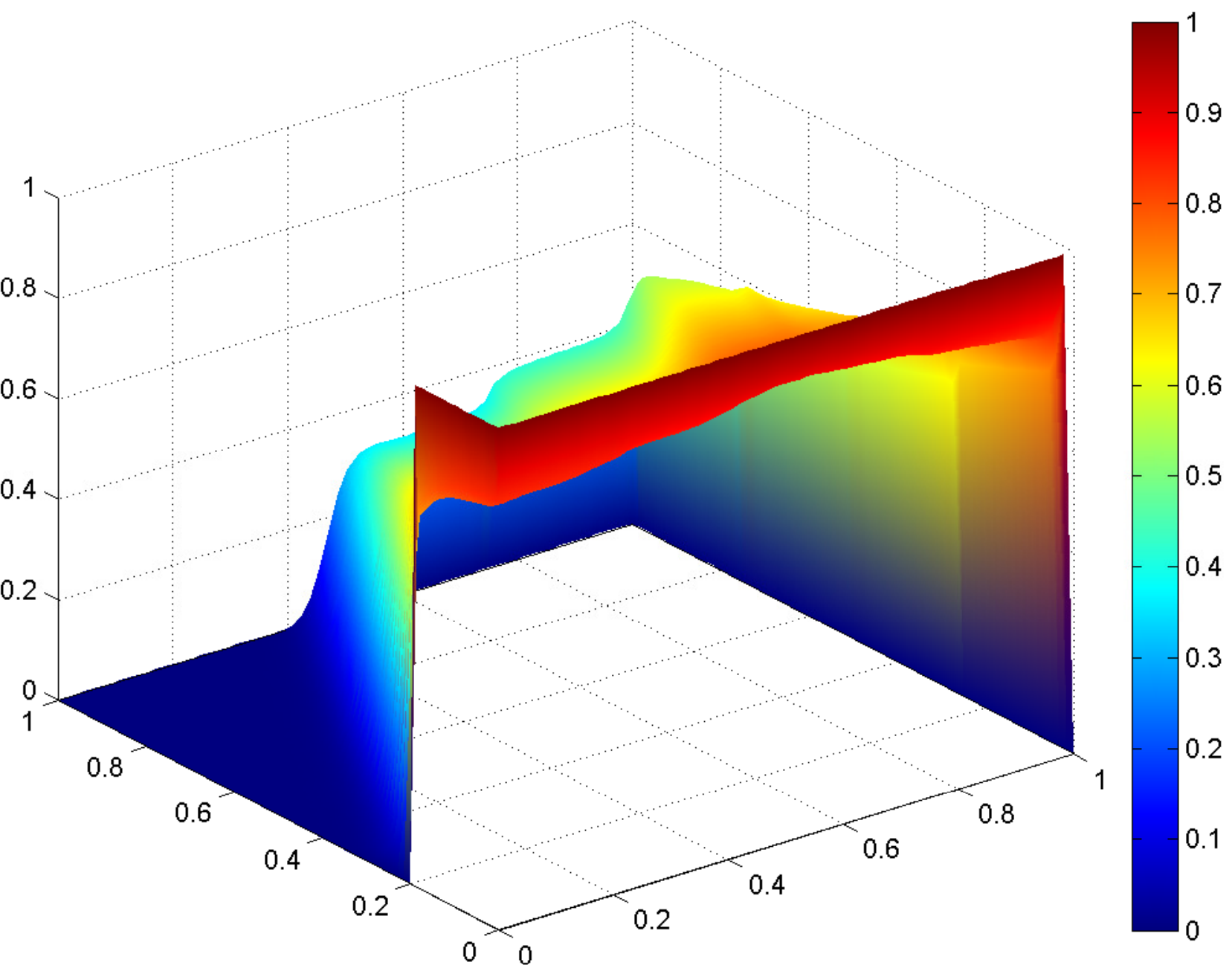}
\includegraphics[width = 0.4\textwidth]{./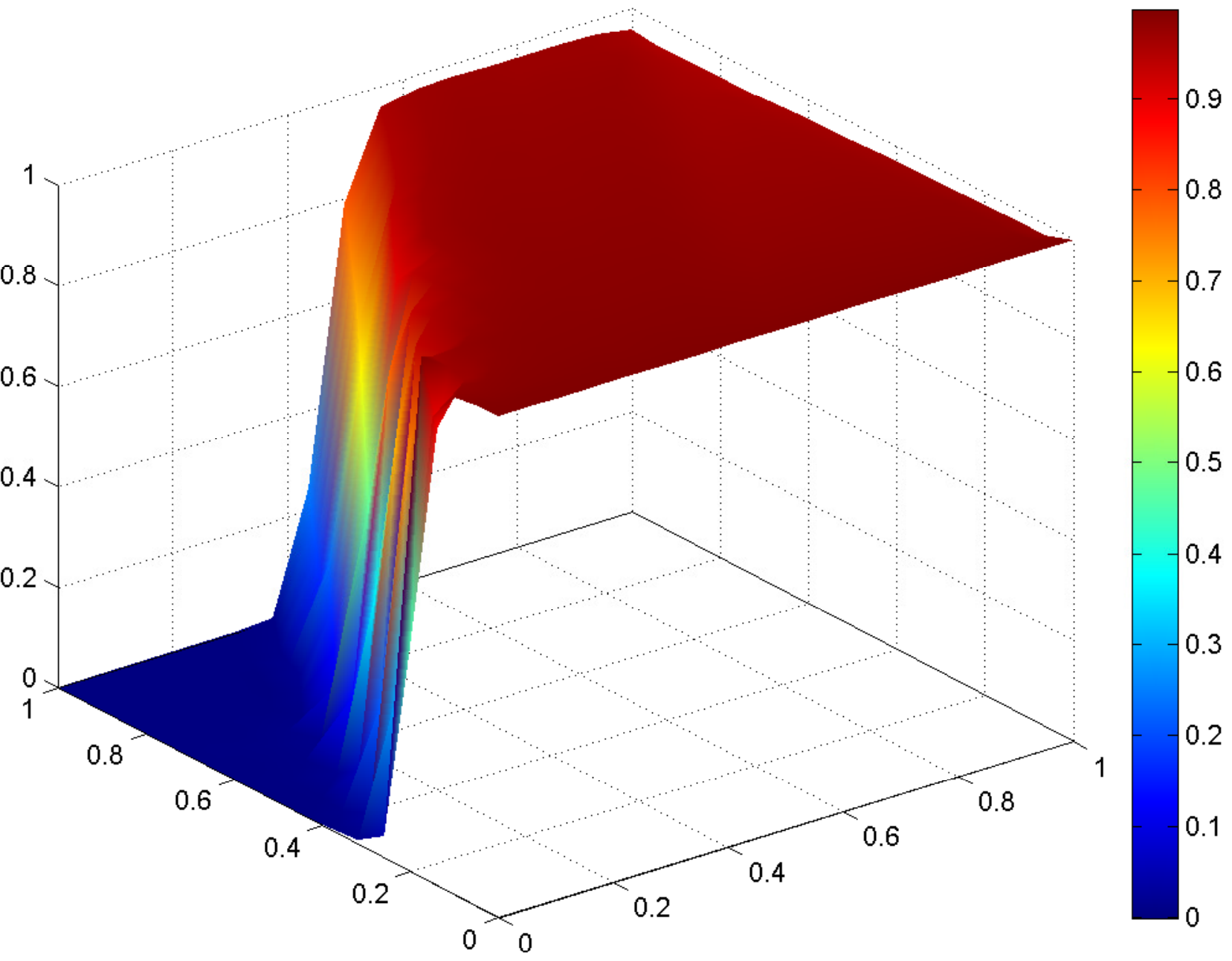}
\includegraphics[width = 0.4\textwidth]{./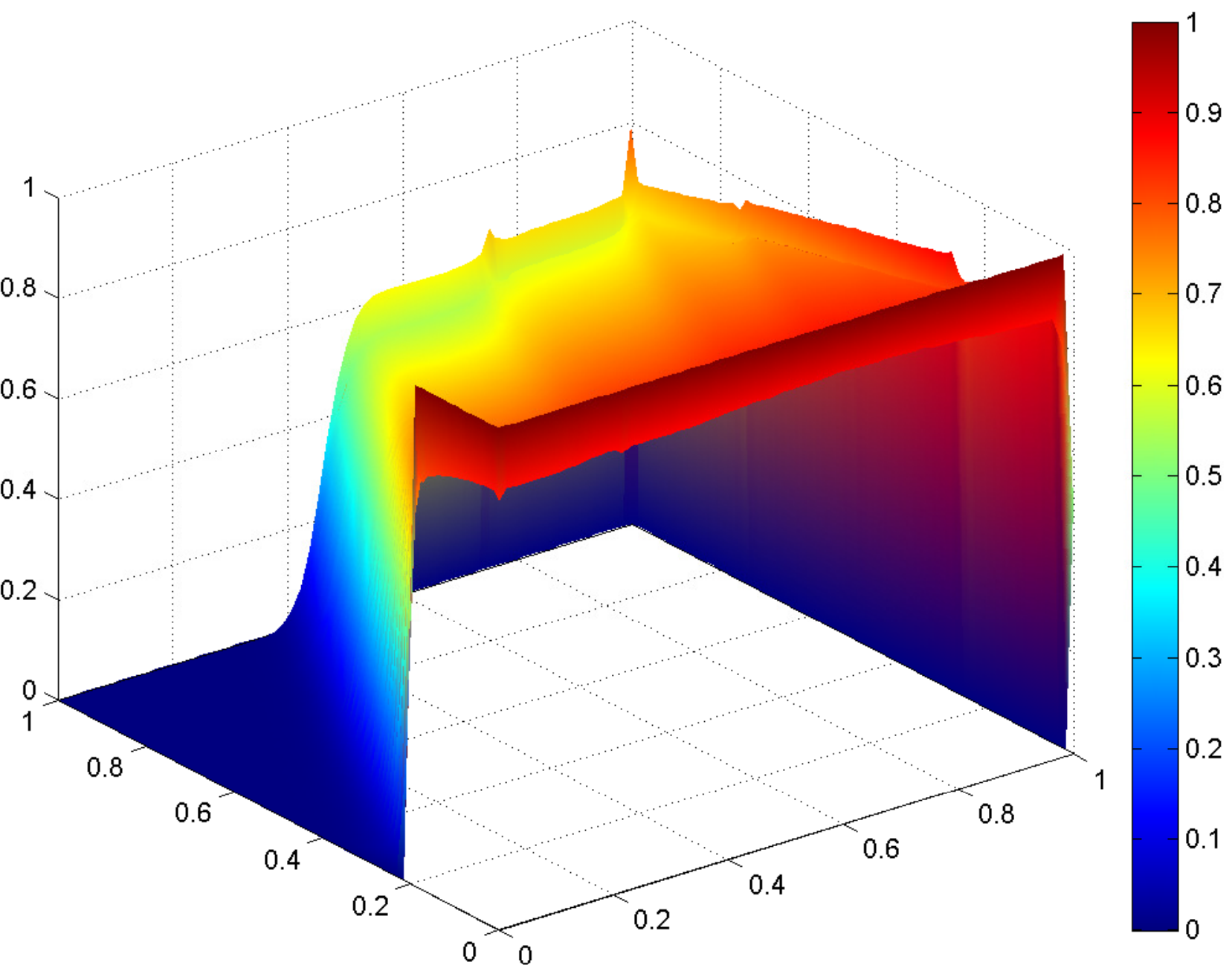}
\includegraphics[width = 0.4\textwidth]{./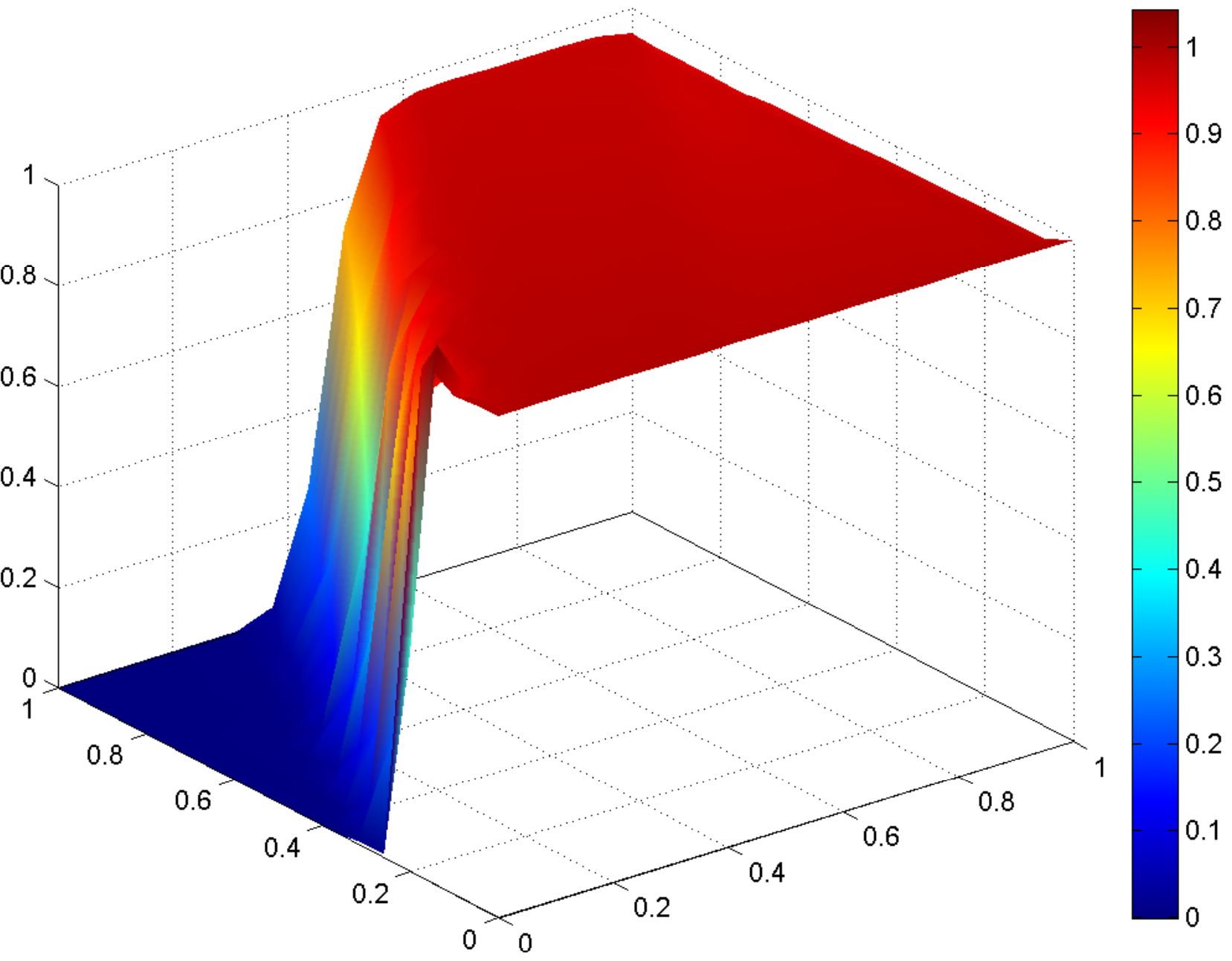}
\caption{3D plot of $u_h$ for interior layer test with $\epsilon = 10^{-3}$. 
Left--Right: LS--strong in 11264 elements, LS-weak in 704 elements. Top--Bottom: $P1$--$P3$.}
\label{interior31} 
\end{figure}

\begin{figure}[ht!]
\centering
\includegraphics[width = 0.4\textwidth]{./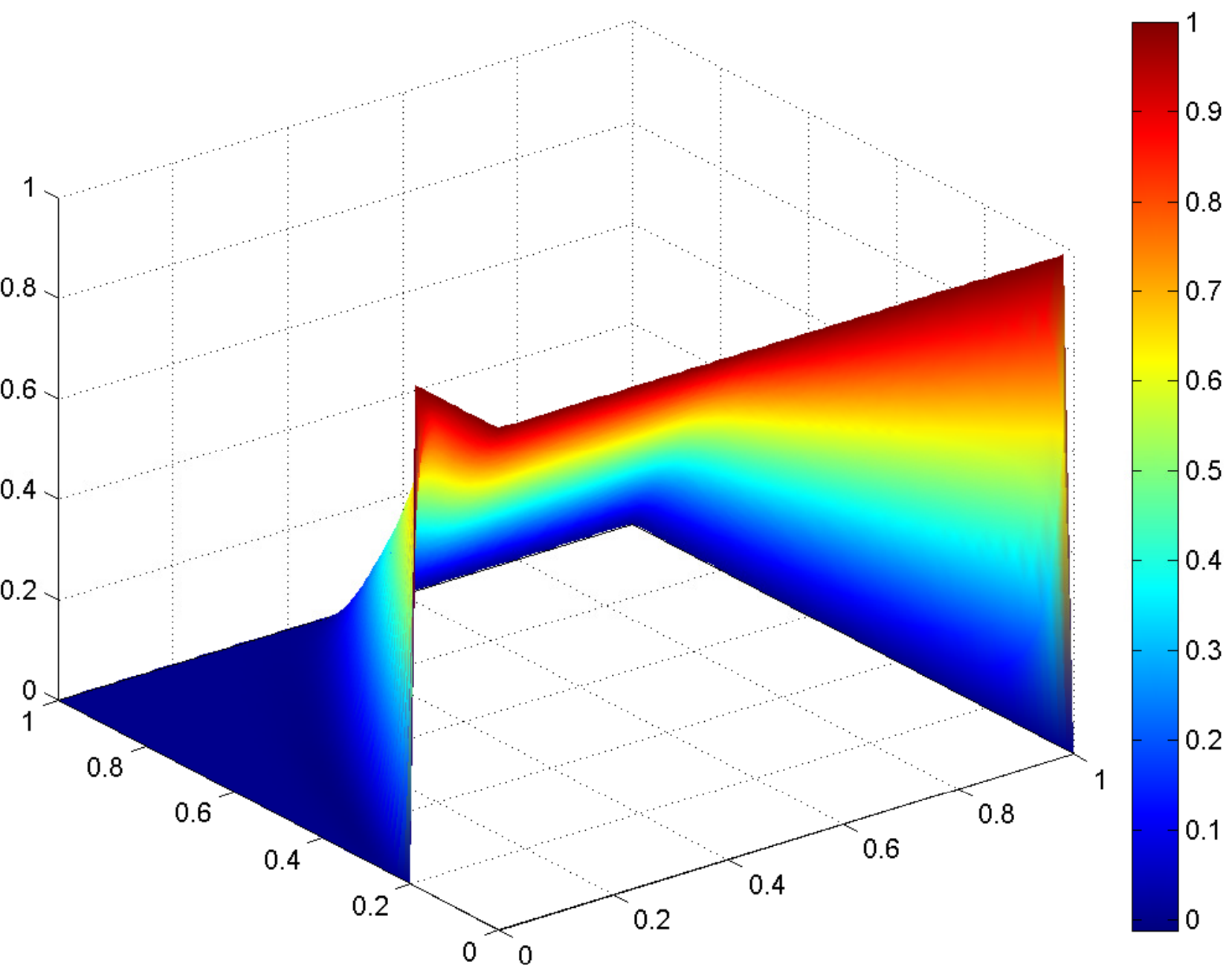}
\includegraphics[width = 0.4\textwidth]{./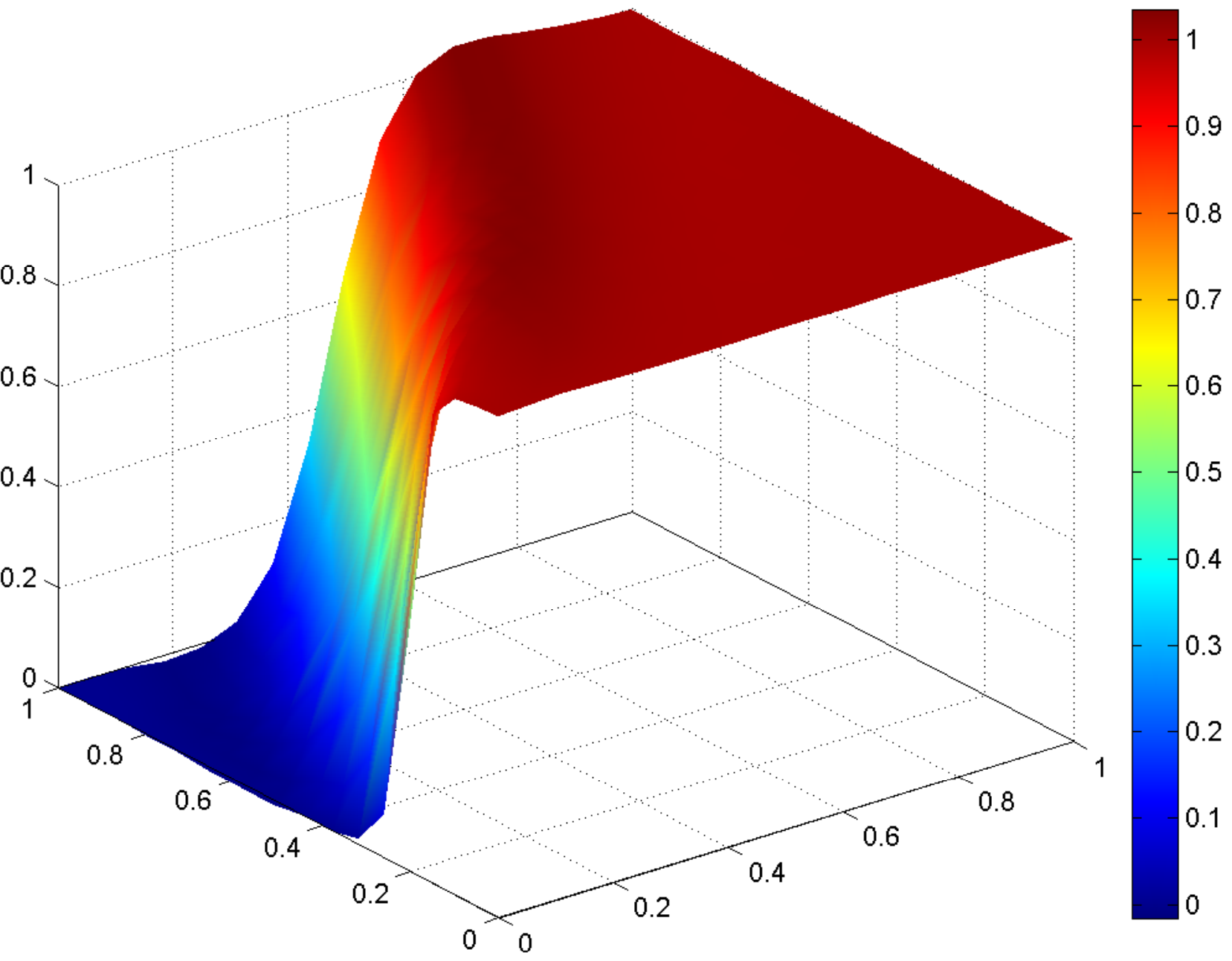}
\includegraphics[width = 0.4\textwidth]{./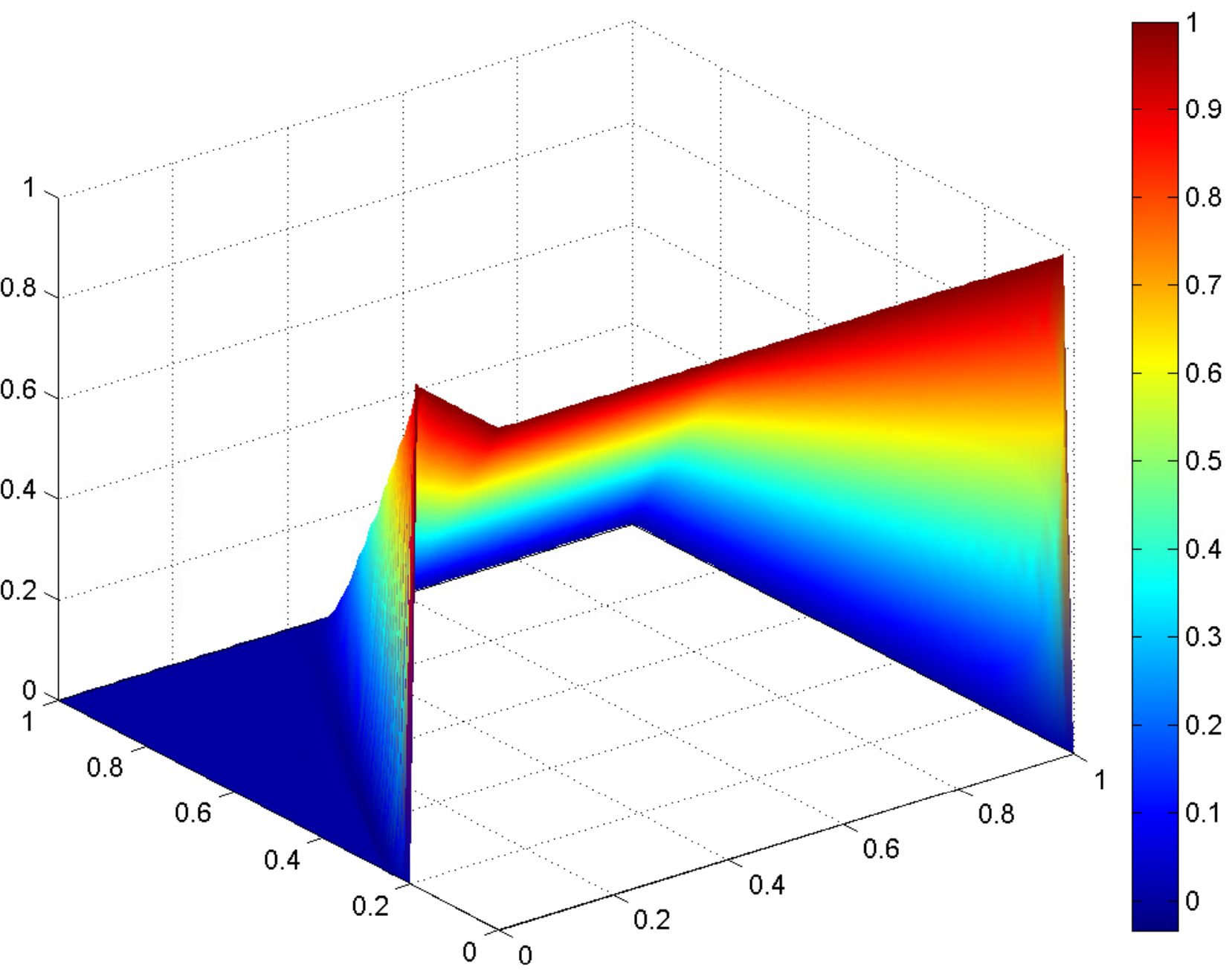}
\includegraphics[width = 0.4\textwidth]{./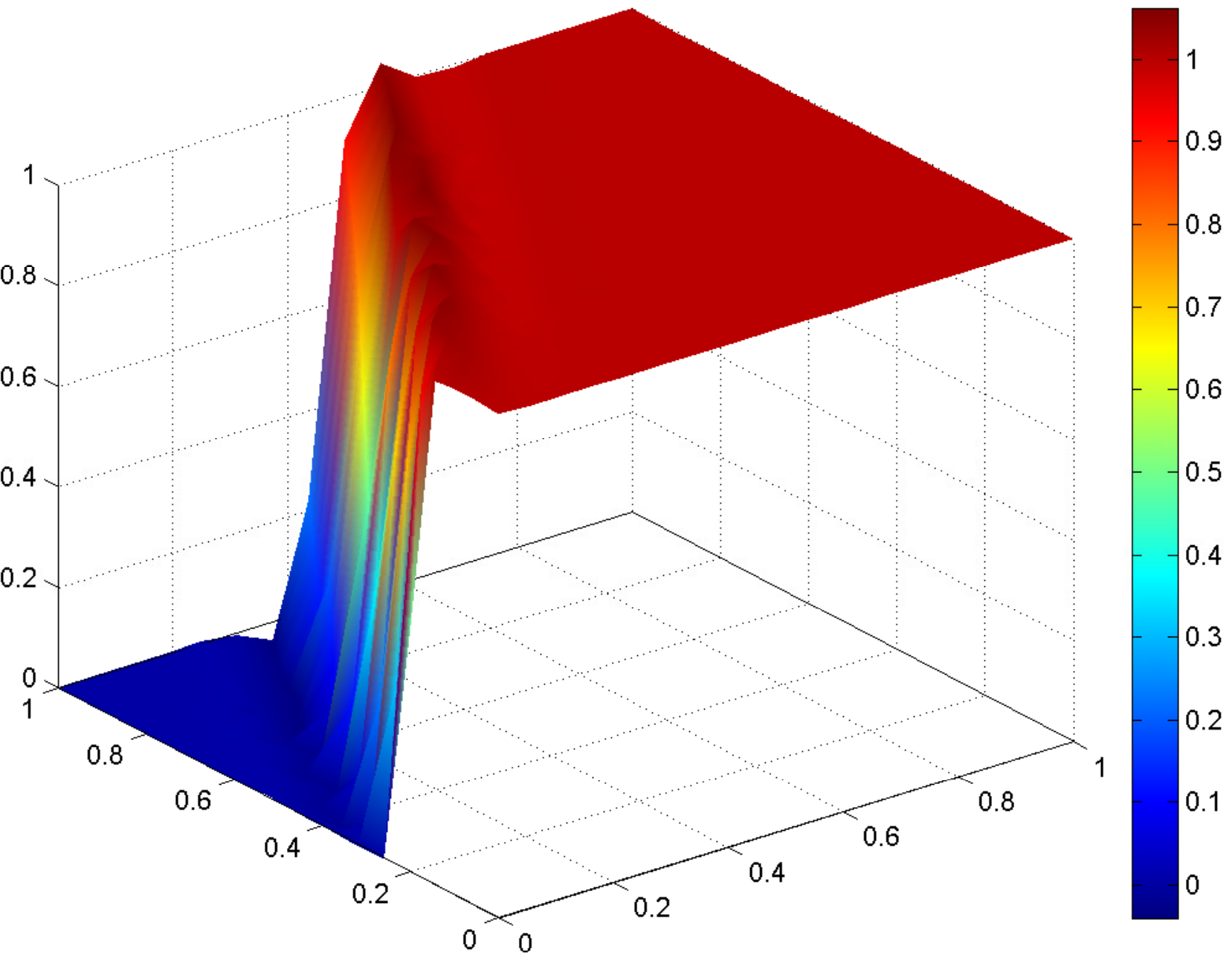}
\includegraphics[width = 0.4\textwidth]{./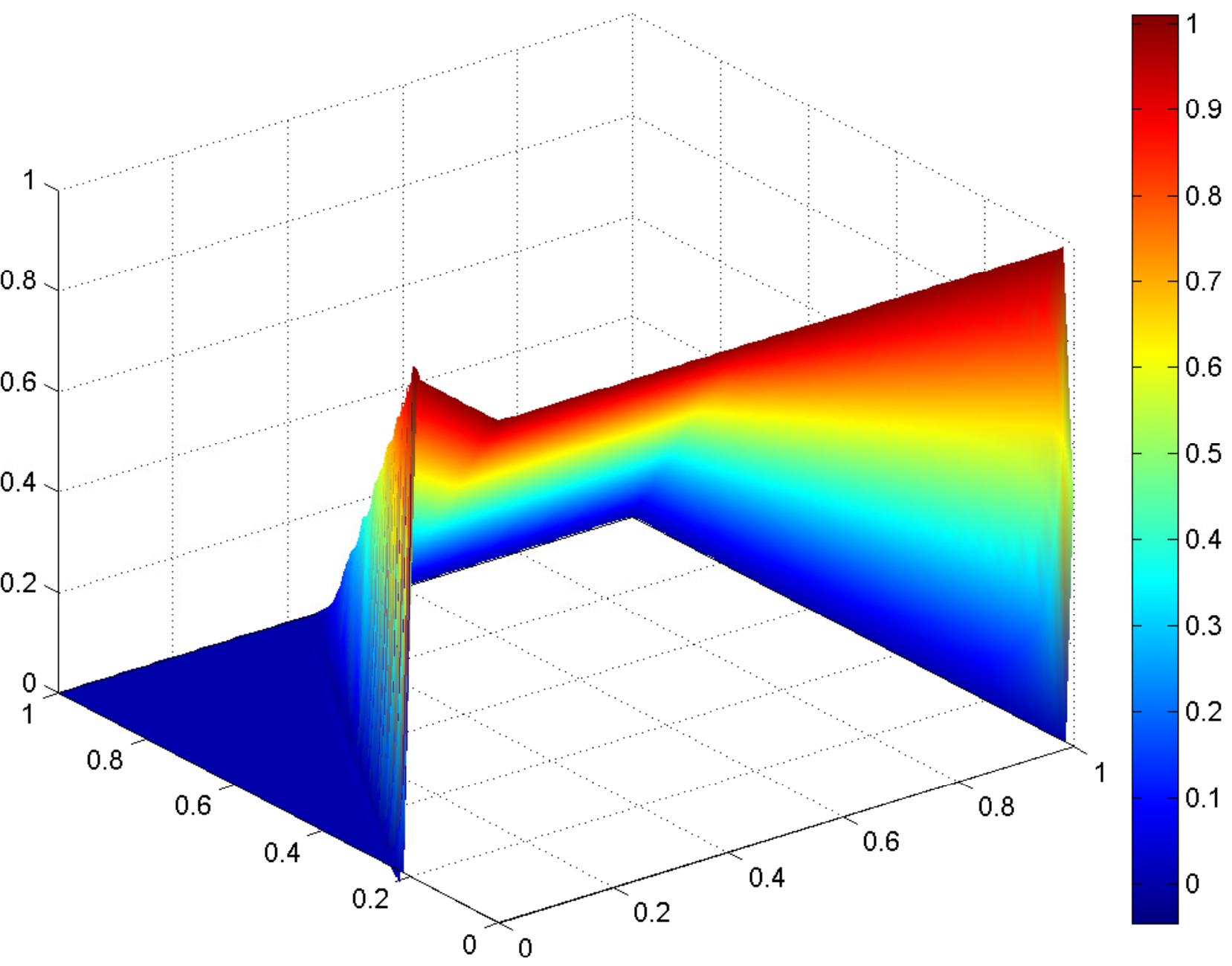}
\includegraphics[width = 0.4\textwidth]{./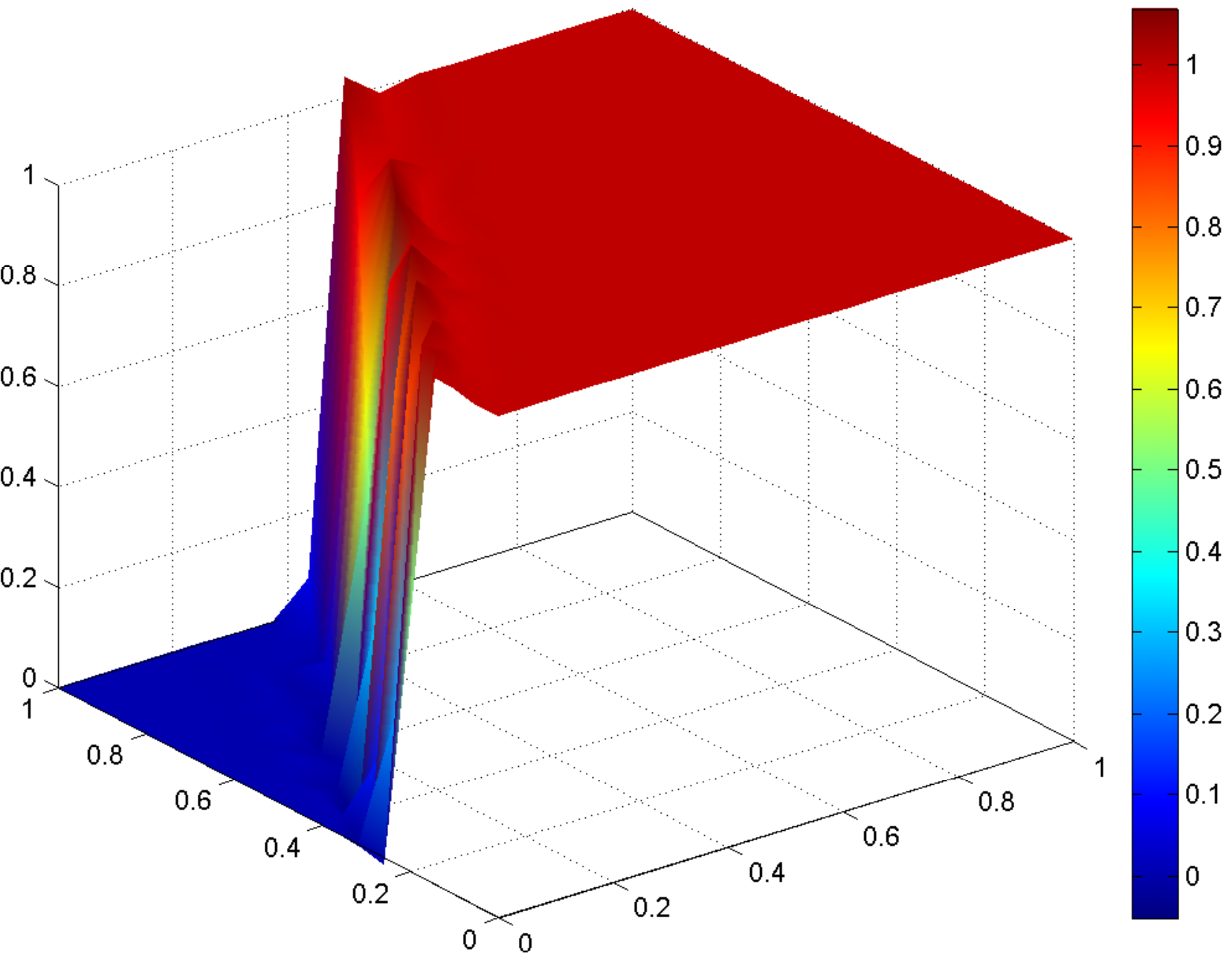}
\caption{3D plot of $u_h$ for interior layer test with $\epsilon = 10^{-9}$. 
Left--Right: LS--strong in 11264 elements, LS-weak in 704 elements. Top--Bottom: $P1$--$P3$.}
\label{interior91} 
\end{figure}

\begin{figure}[ht!]
\centering
\includegraphics[width = 0.4\textwidth]{./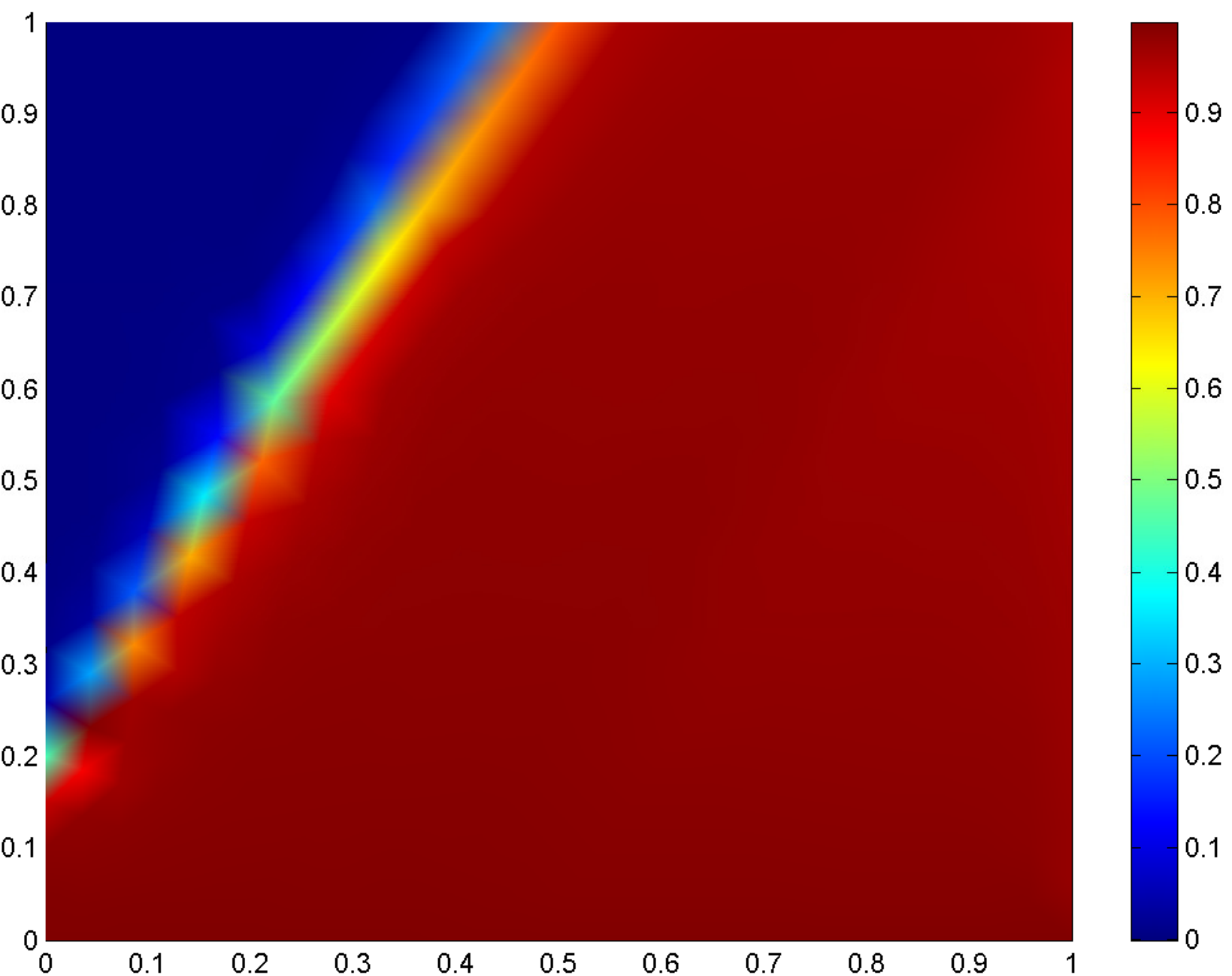}
\includegraphics[width = 0.4\textwidth]{./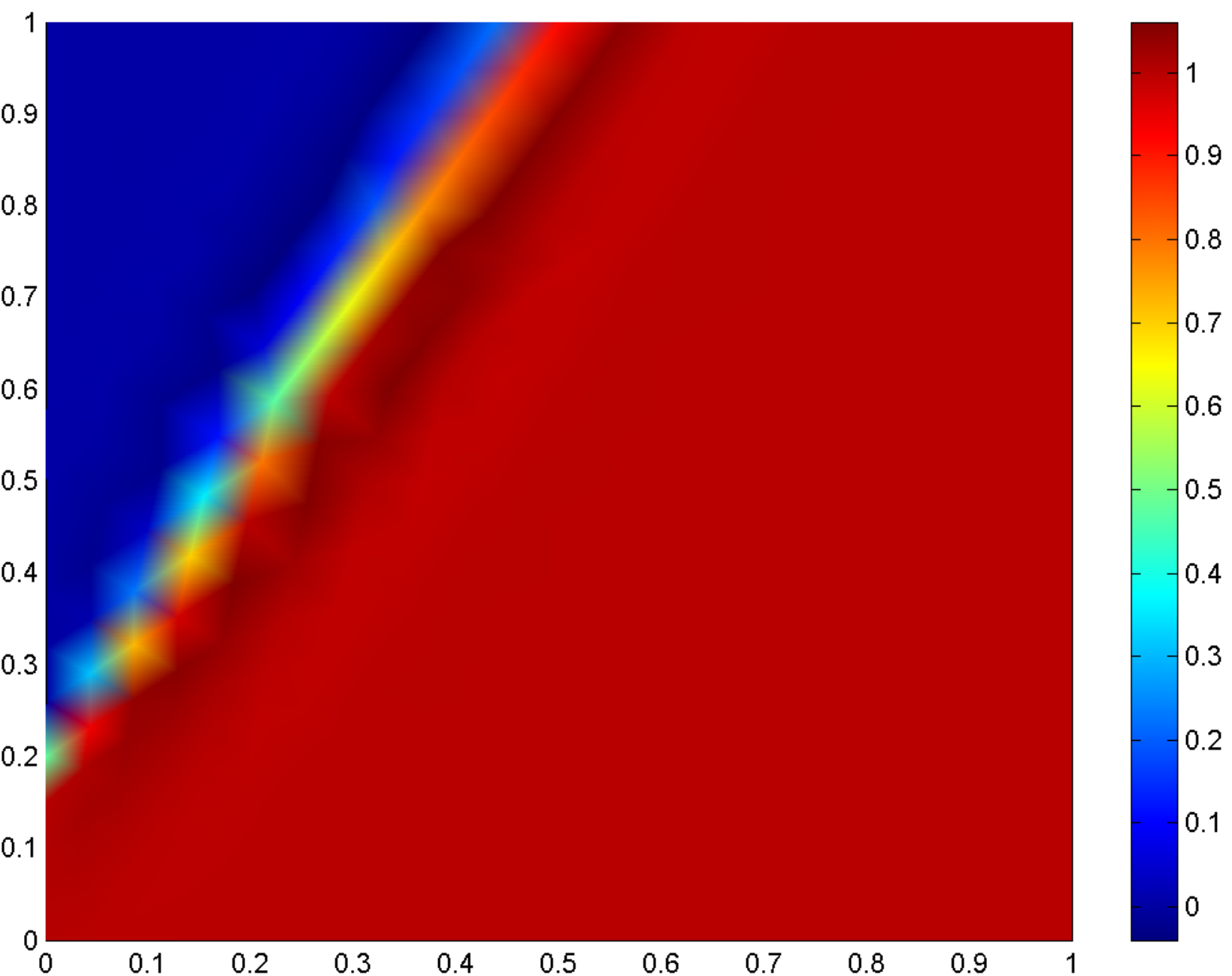}
\includegraphics[width = 0.4\textwidth]{./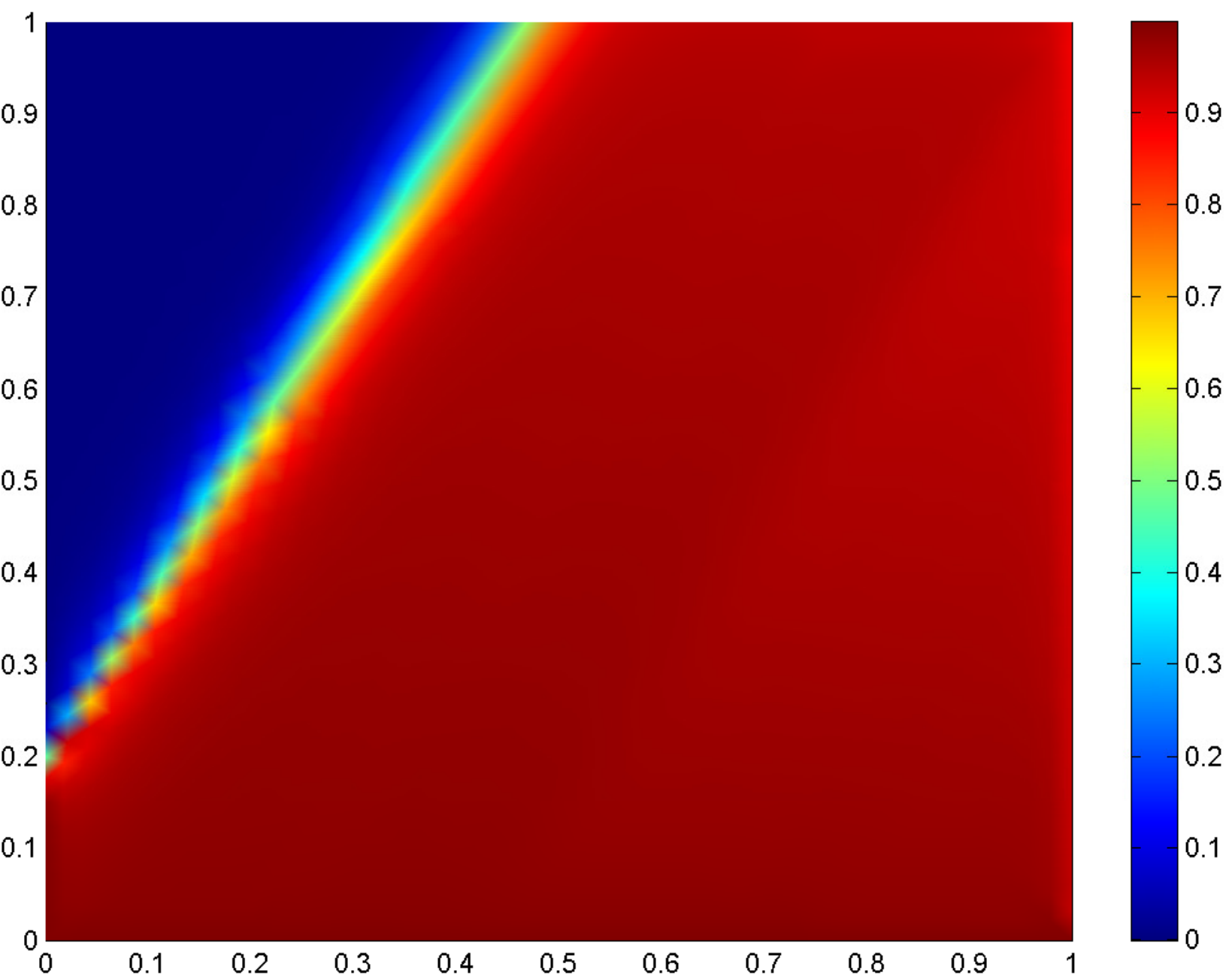}
\includegraphics[width = 0.4\textwidth]{./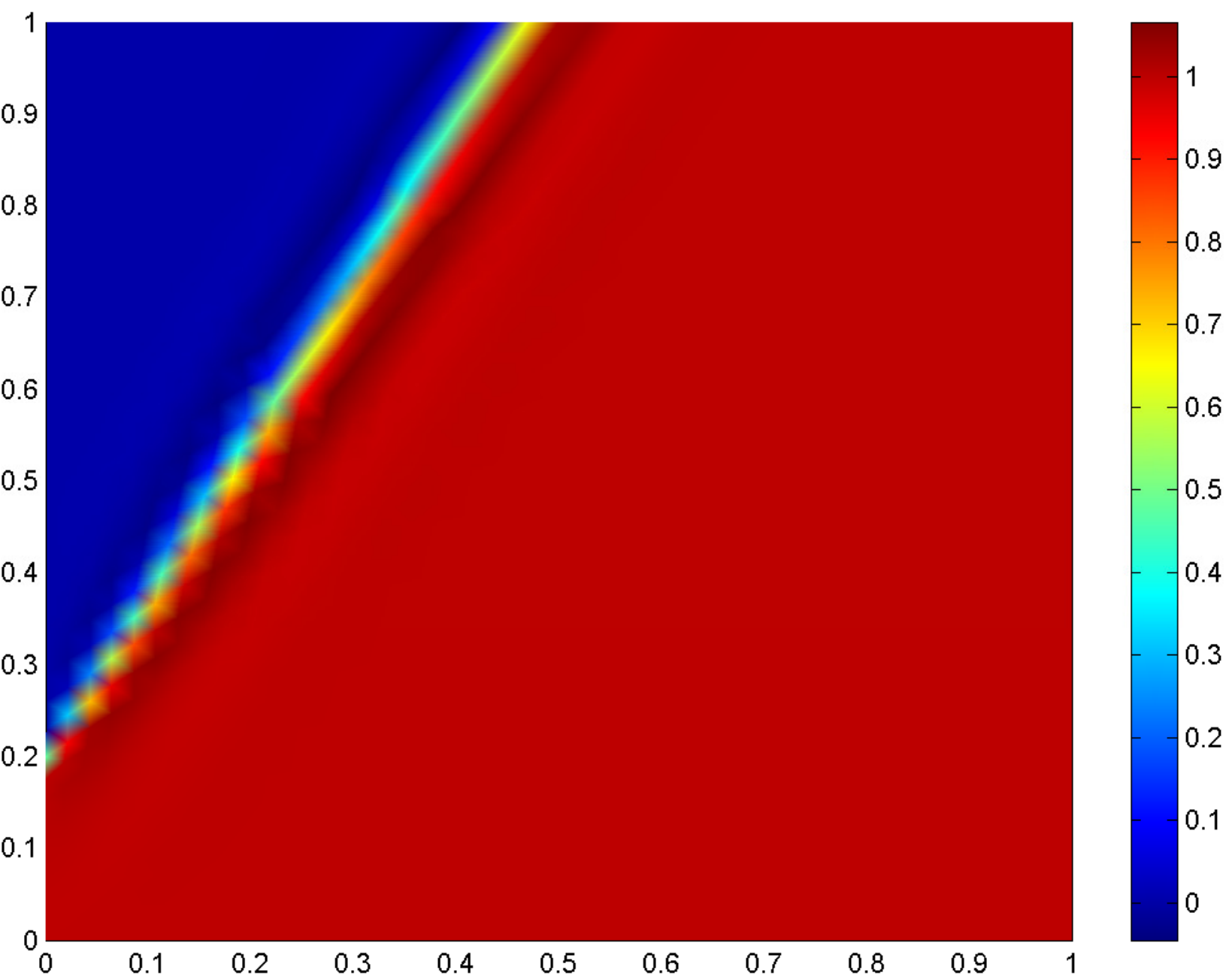}
\includegraphics[width = 0.4\textwidth]{./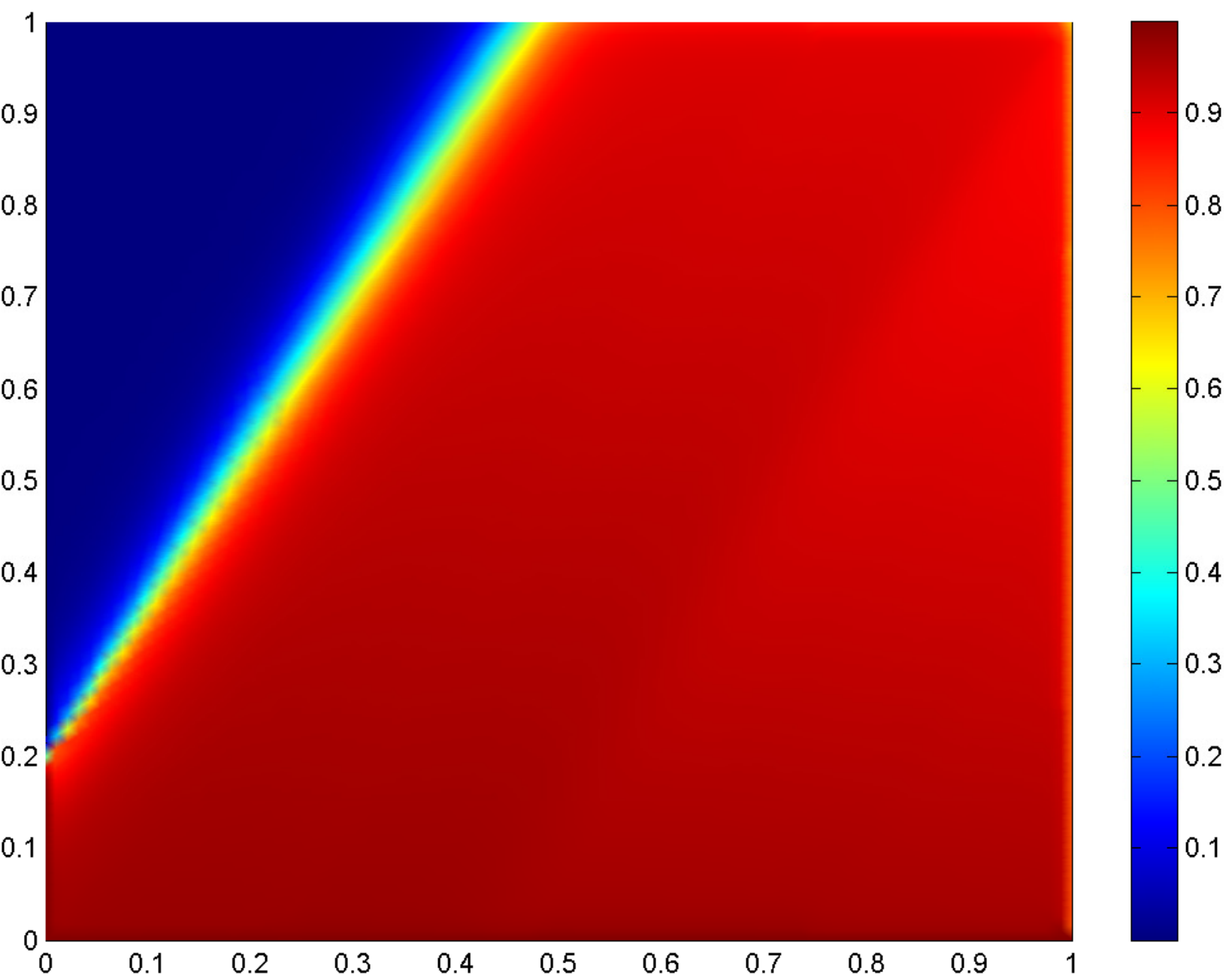}
\includegraphics[width = 0.4\textwidth]{./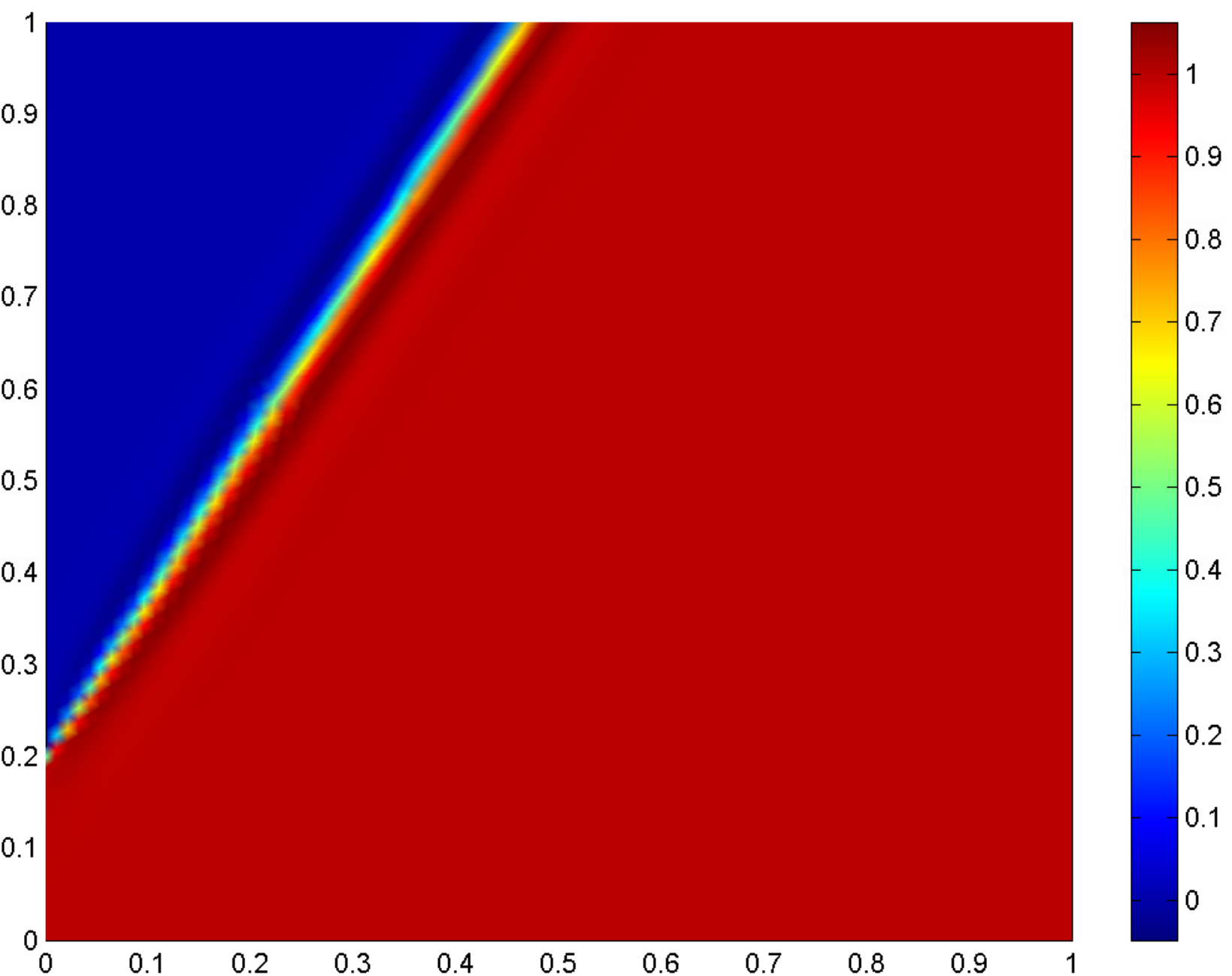}
\caption{Contour plot of $u_h$ for interior layer test using LS--weak $P$2. Left: $\epsilon = 10^{-3}$; 
Right: $\epsilon = 10^{-9}$. Top: 704 elements; Middle: 2816 element; Bottom: 11264 elements.}
\label{interior_contour} 
\end{figure}

\subsection{A boundary layer test}
Finally, we take $\boldsymbol{\beta} = [1, 1]^T$, and choose the source term $f$ such that the exact solution
\[
u(x,y) = \sin\frac{\pi\,x}{2} + \sin\frac{\pi\,y}{2}\left(1-\sin\frac{\pi\,x}{2}\right) 
+\frac{e^{-1/\epsilon} - e^{-(1-x)(1-y)/\epsilon}}{1 - e^{-1/\epsilon}}.
\]
The solution develops boundary layers along the boundaries $x = 1$ and $y = 1$ for small $\epsilon$ (see Fig.~\ref{bdry_plot1}). 
Let us emphasize that our exact solution is different from that of \cite{AyusoMarini:cdf} in order to 
have a better observation of the local convergence results for higher degree approximations.
In Fig.~\ref{bdry_plot1}, we plot the exact solution and computational
results for $\epsilon =  10^{-2},10^{-6},10^{-9}$.
We notice that the numerical solutions produced by LS-strong are totally polluted by boundary layers, 
while LS-weak can produce numerical solutions very close to the exact ones except in area 
very close to boundary layers.

In Fig.~\ref{bdry_test1}, we show the convergence of $u_h$ produced by LS--weak in $L^2$--norm for $\epsilon = 10^{-2},10^{-9}$ 
in the subdomain $\widetilde{\Omega} = [0, 0.9]\times[0,0.9]\subset \Omega$ to exclude the unresolved boundary layer. 
We notice that the accuracy of $u_{h}$ in $\tilde{\Omega}$ is not affected by boundary layers.  

\begin{figure}[ht!]
\centering
\includegraphics[width = 0.3\textwidth]{./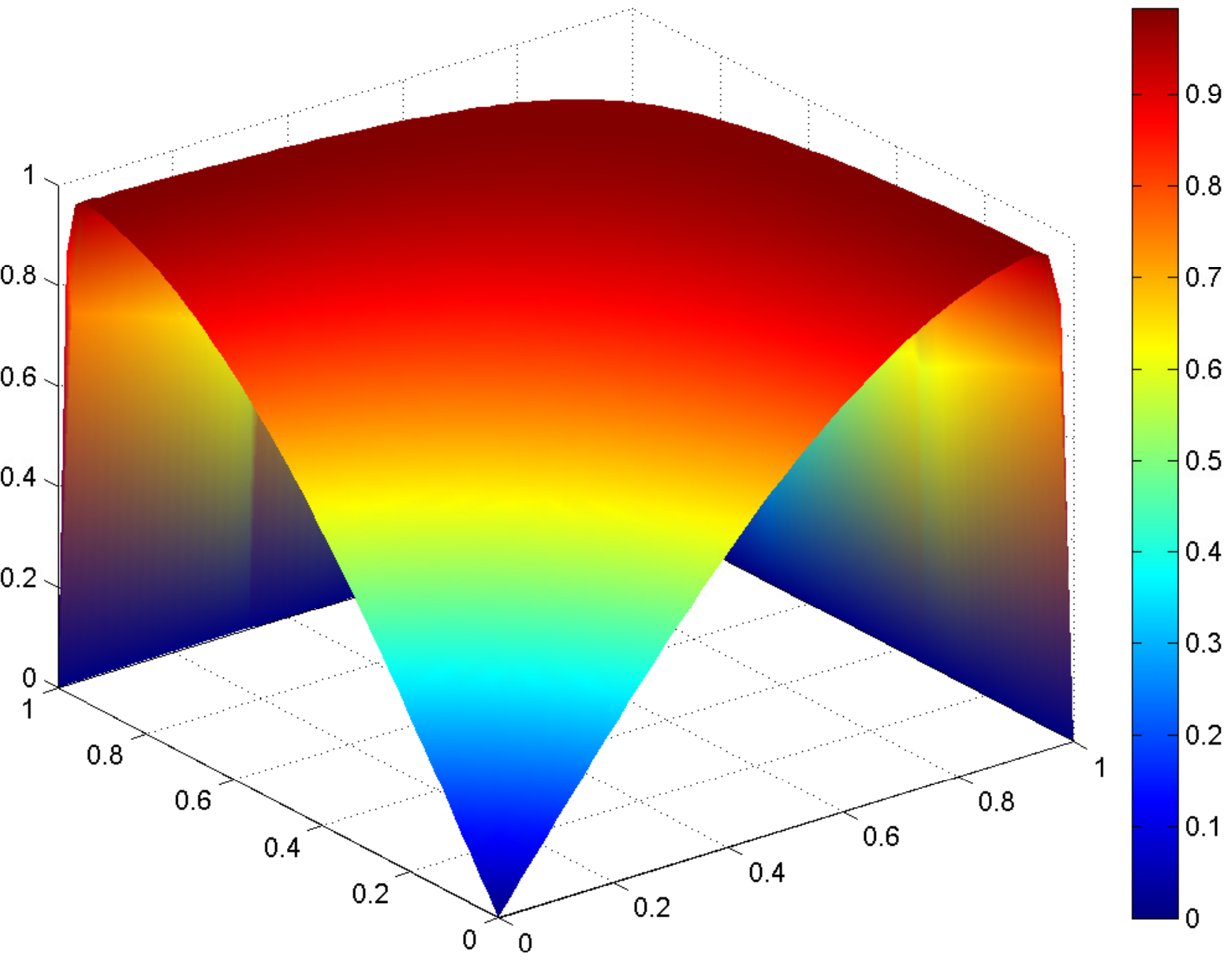}
\includegraphics[width = 0.3\textwidth]{./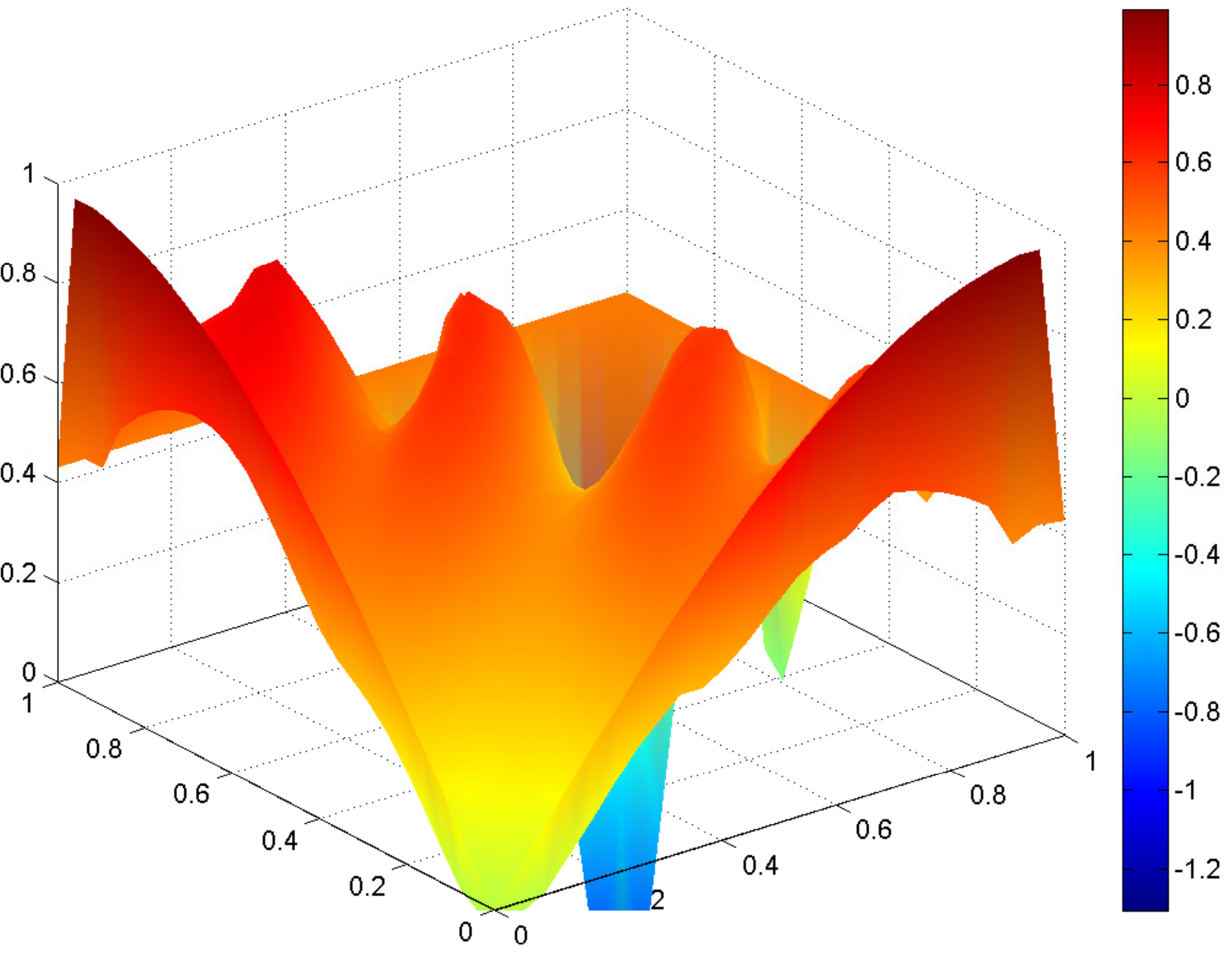}
\includegraphics[width = 0.3\textwidth]{./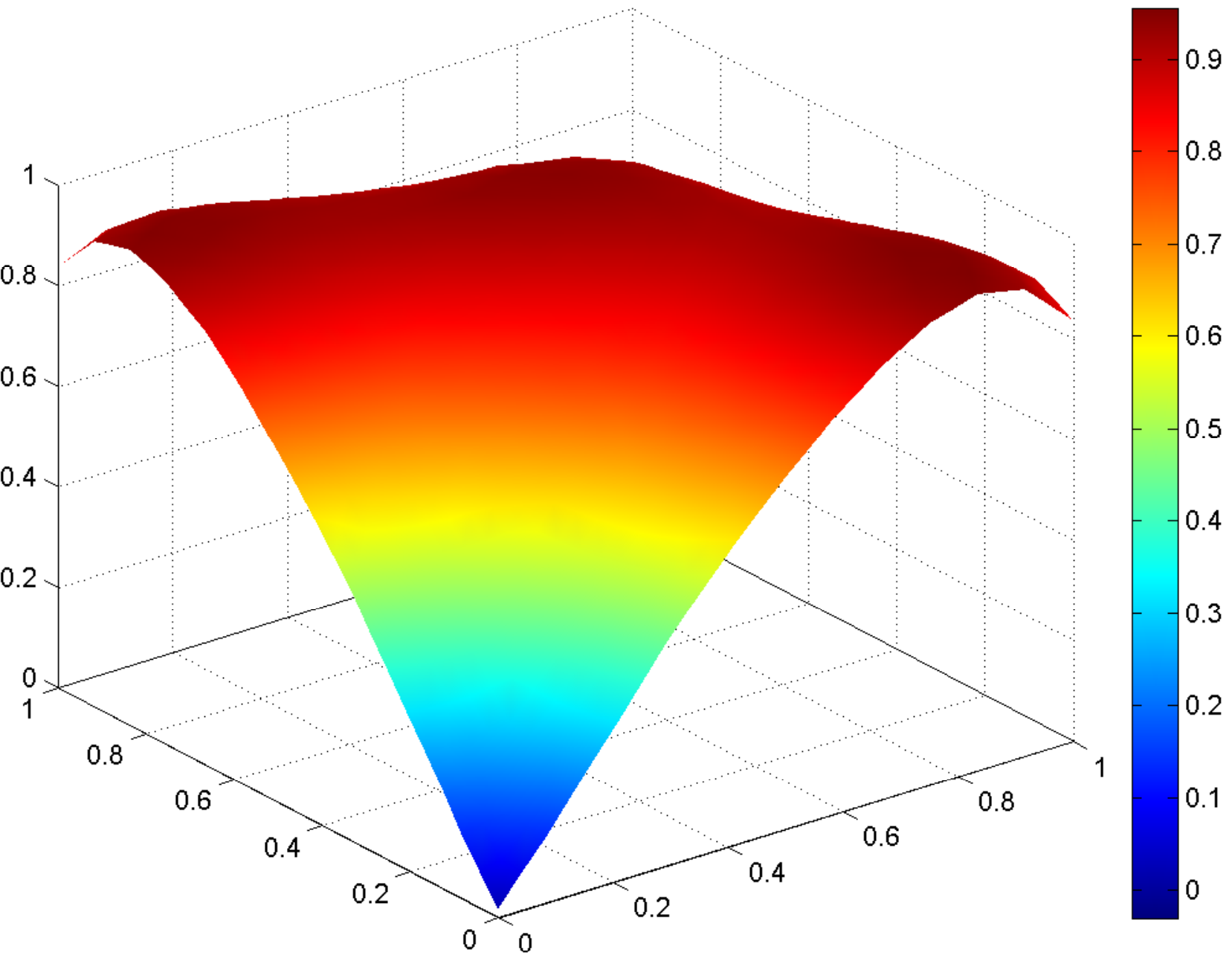}
\includegraphics[width = 0.3\textwidth]{./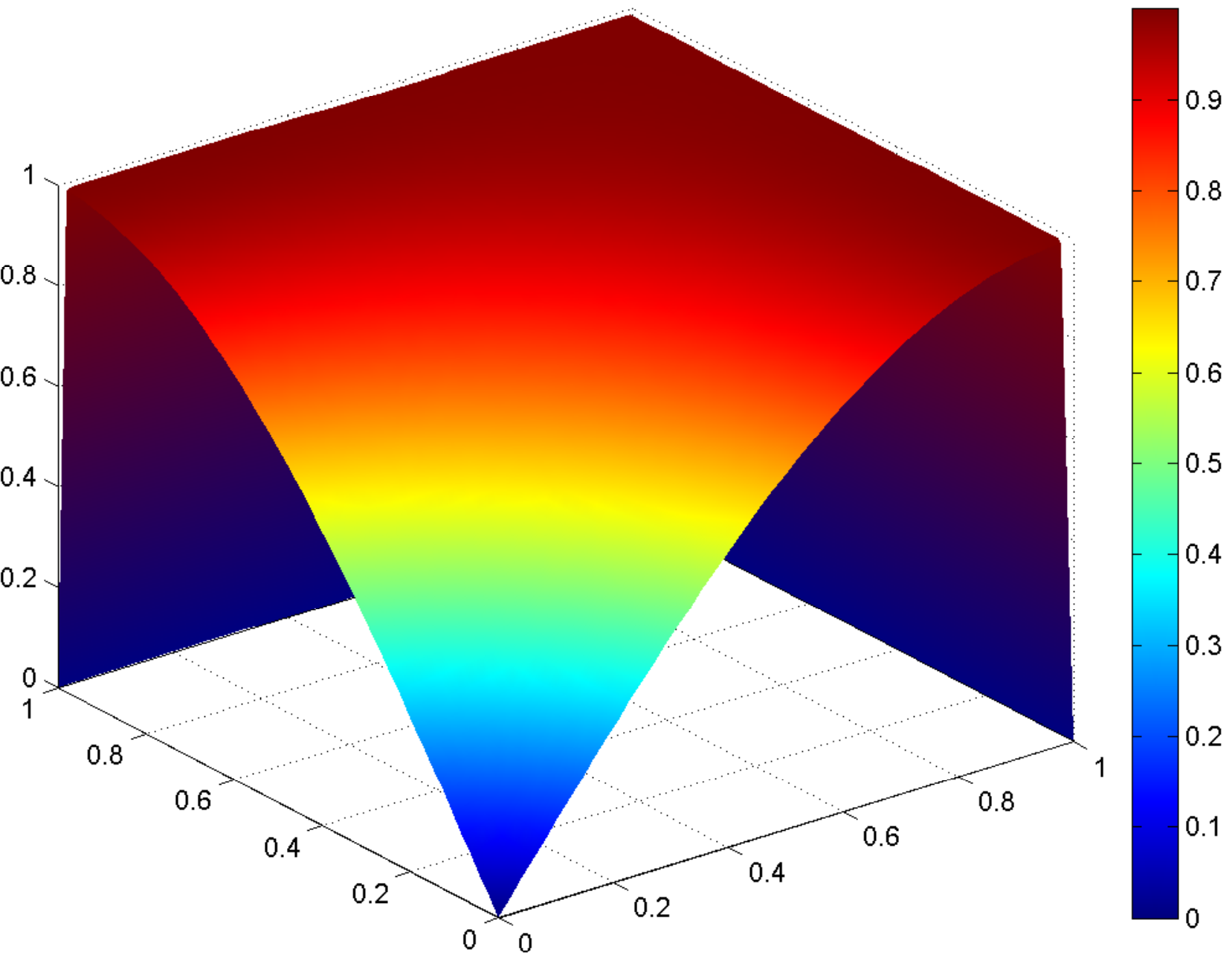}
\includegraphics[width = 0.3\textwidth]{./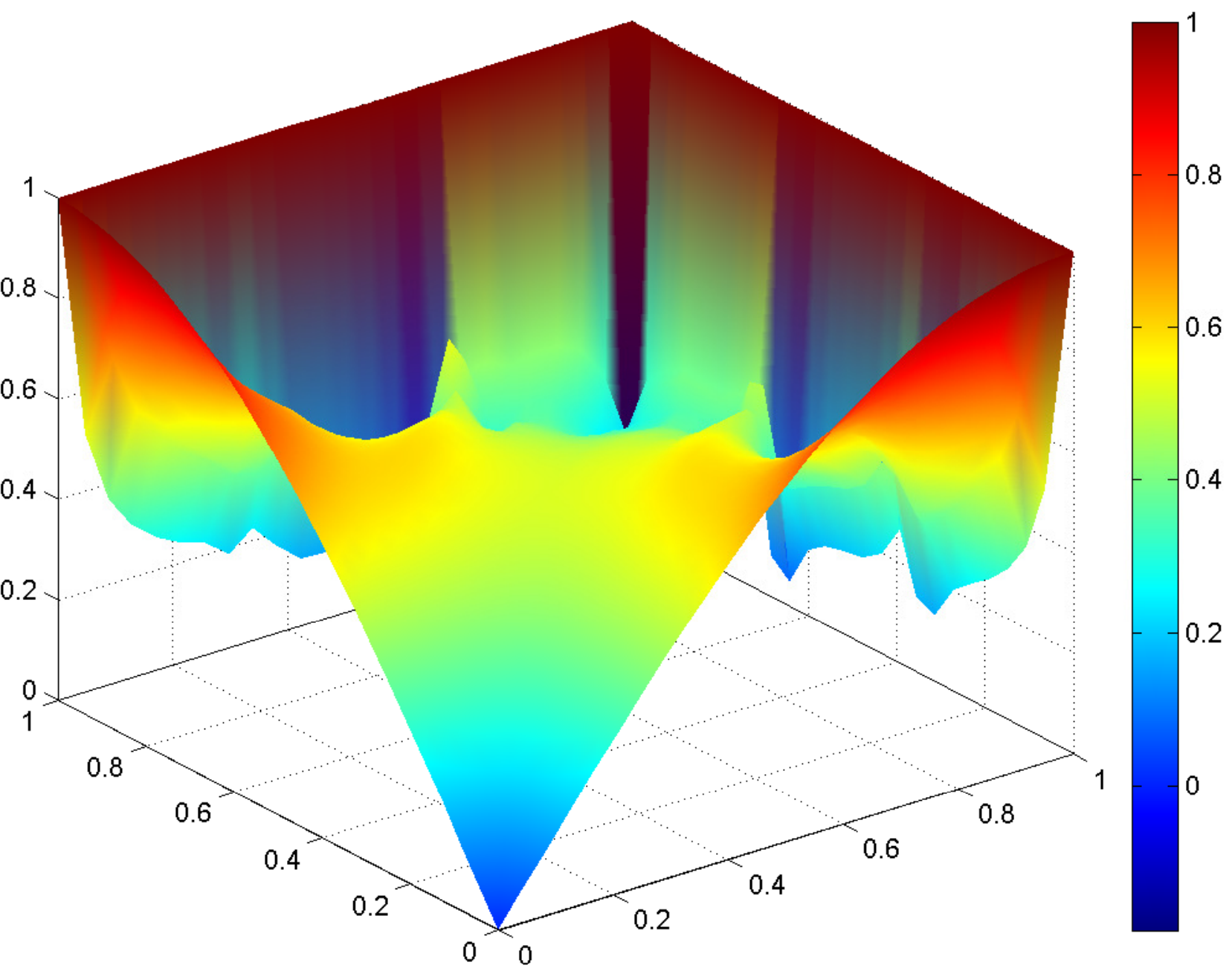}
\includegraphics[width = 0.3\textwidth]{./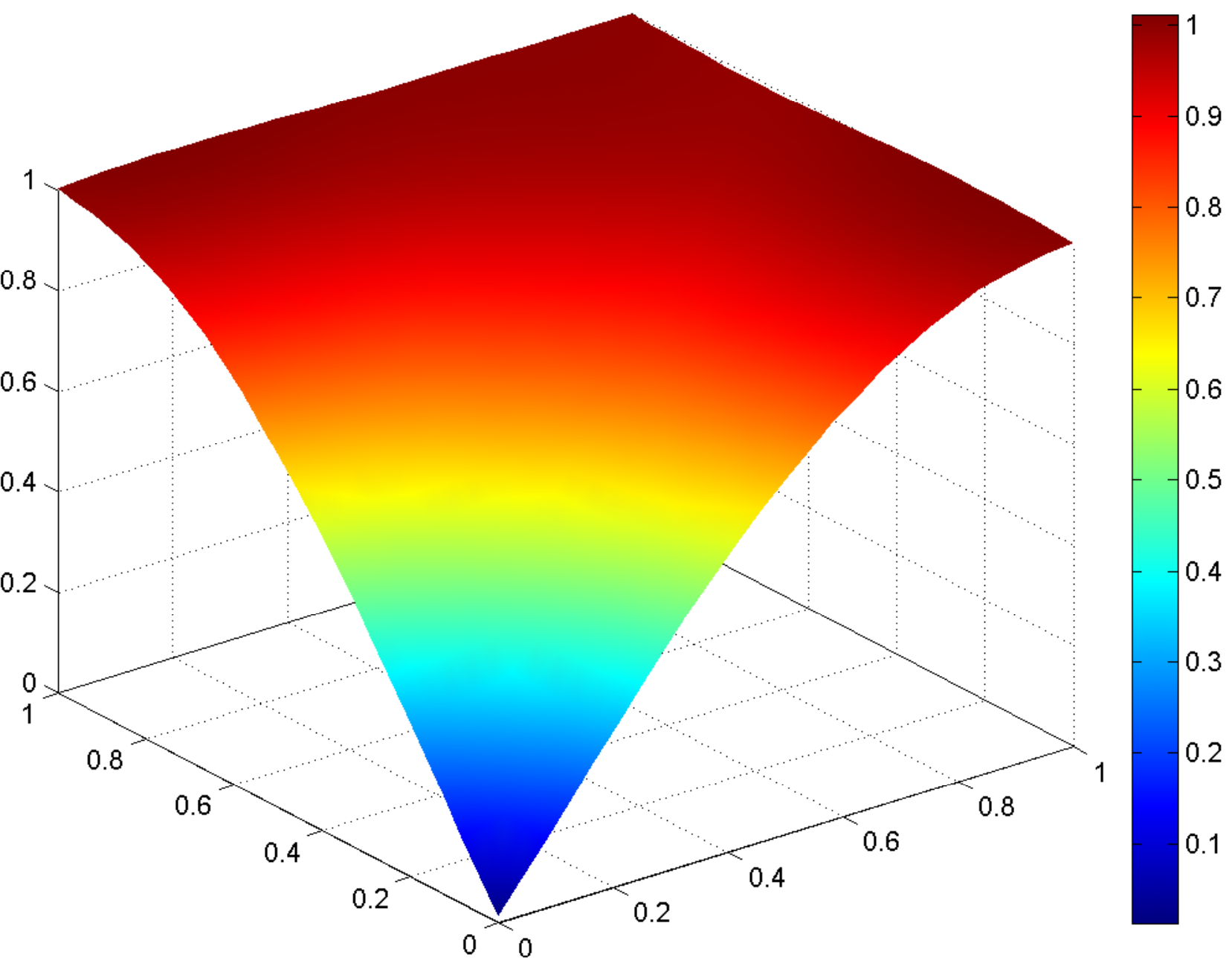}
\includegraphics[width = 0.3\textwidth]{./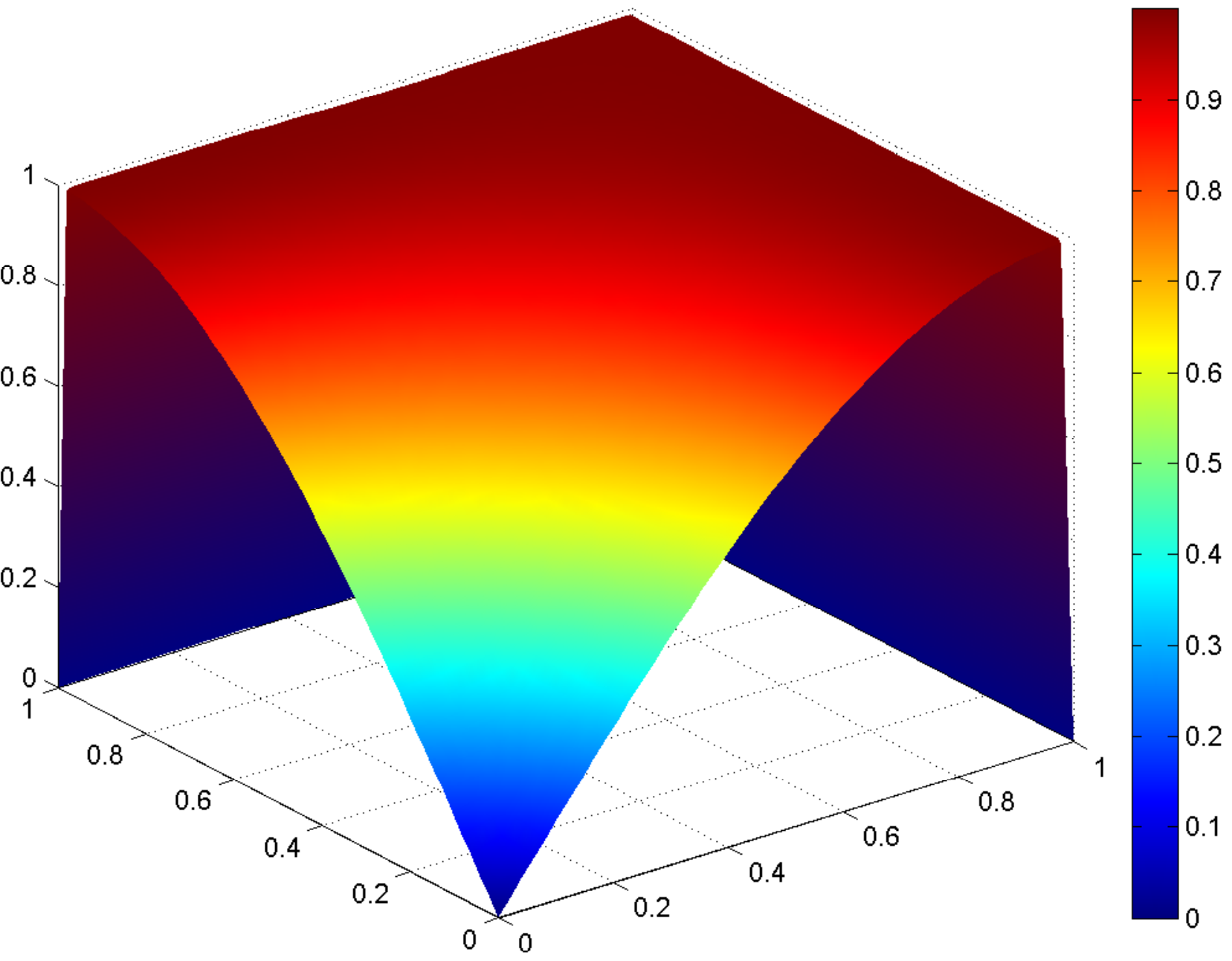}
\includegraphics[width = 0.3\textwidth]{./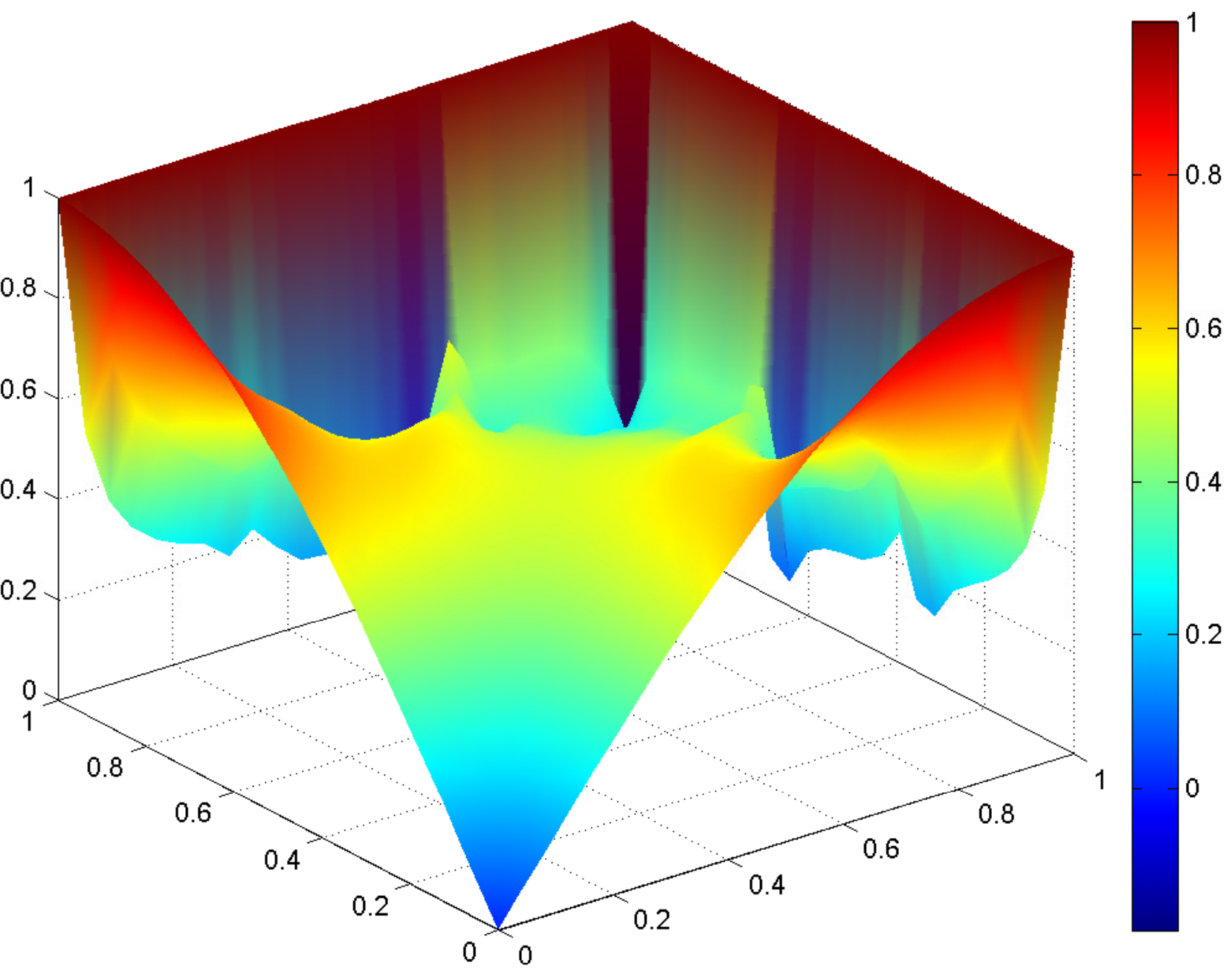}
\includegraphics[width = 0.3\textwidth]{./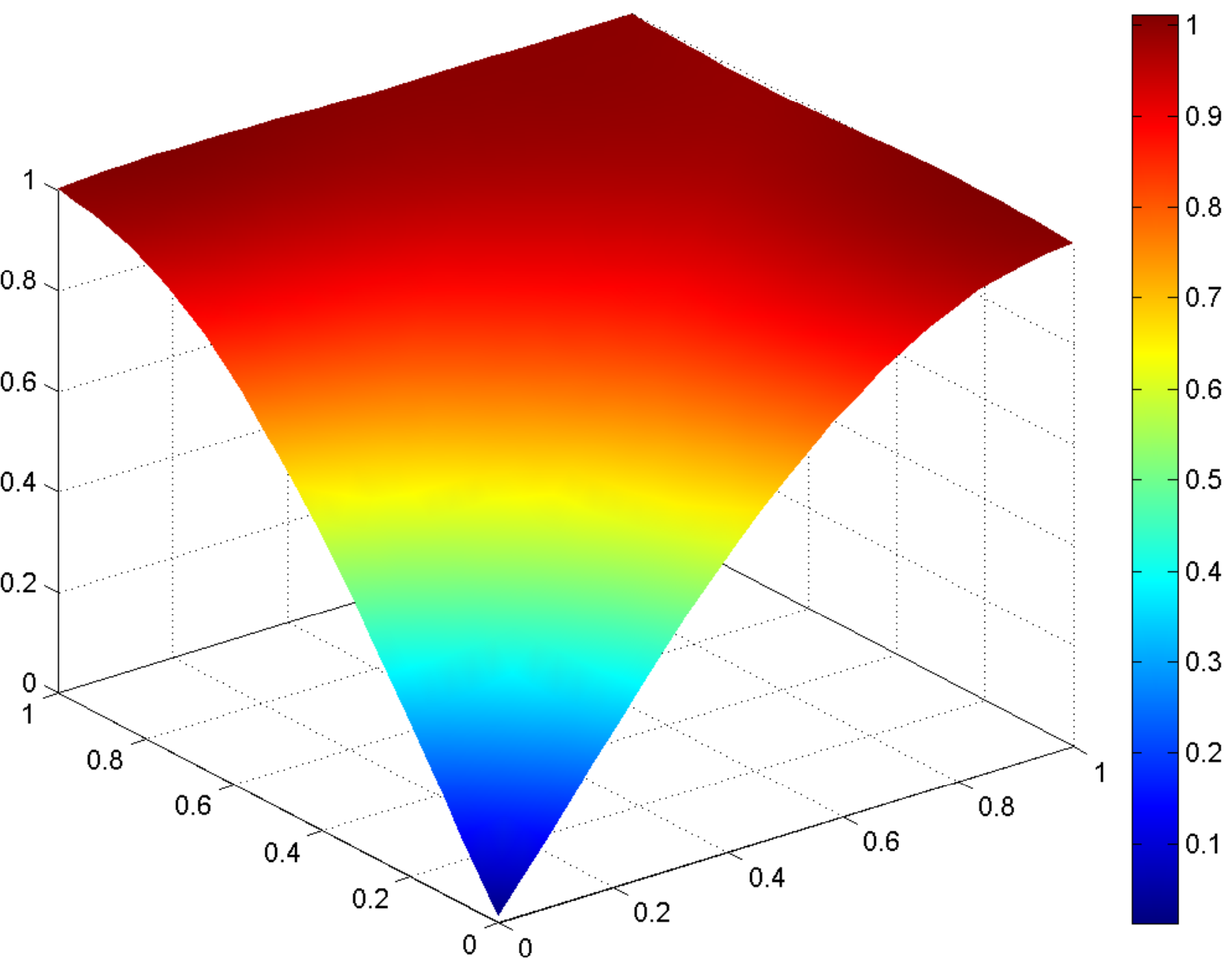}
\caption{3D plot of $u$ for the boundary layer test. Top: $\epsilon = 10^{-2}$; Middle: $\epsilon=10^{-6}$; Bottom: $\epsilon = 10^{-9}$. 
Left: exact solution; Middle: LS--strong in 1664 elements using P2; Right: LS--weak in 416 elements using P1.}
\label{bdry_plot1} 
\end{figure}

\begin{figure}
\hfill{}\includegraphics[width=0.4\textwidth]{./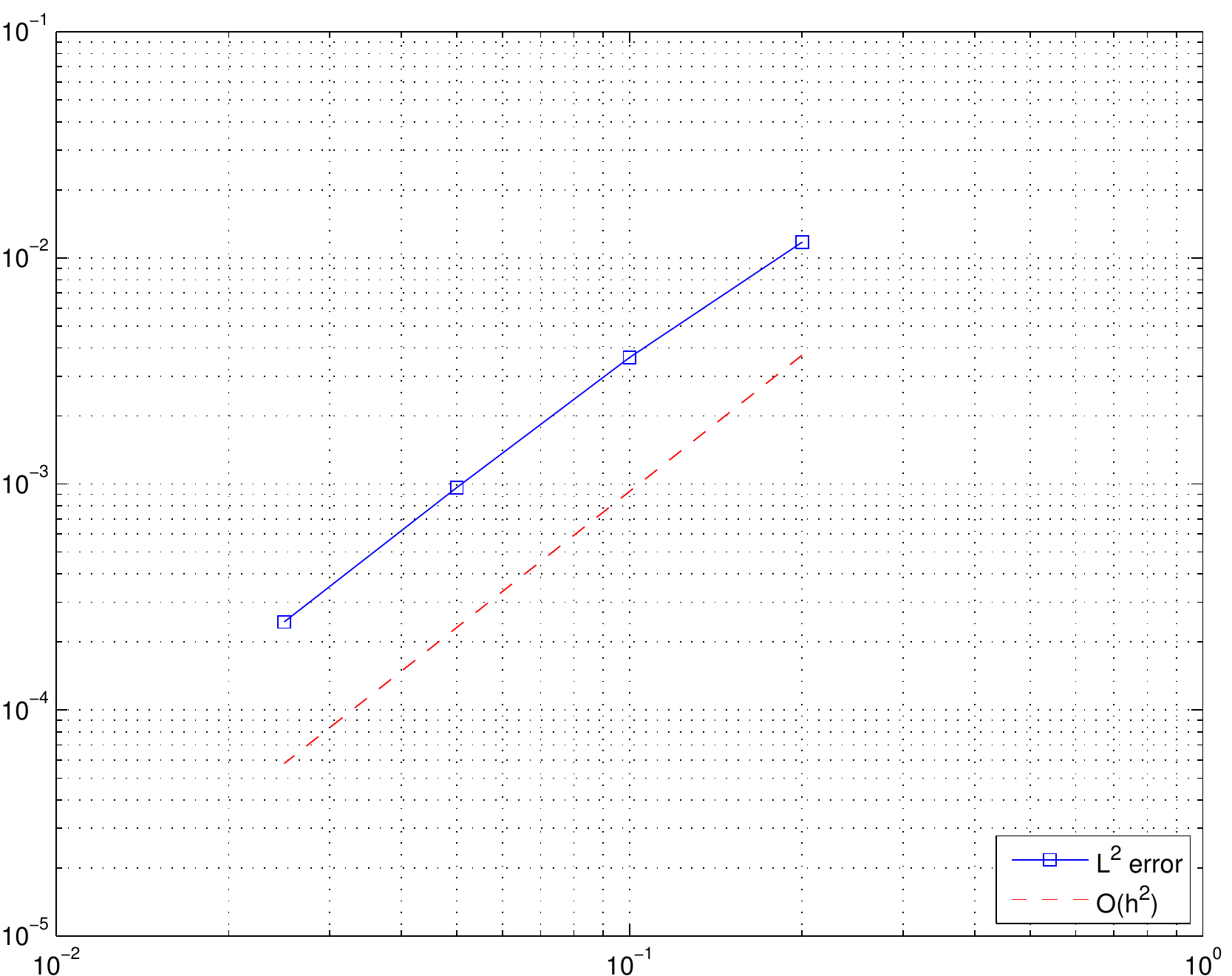}
\hfill{}\includegraphics[width=0.4\textwidth]{./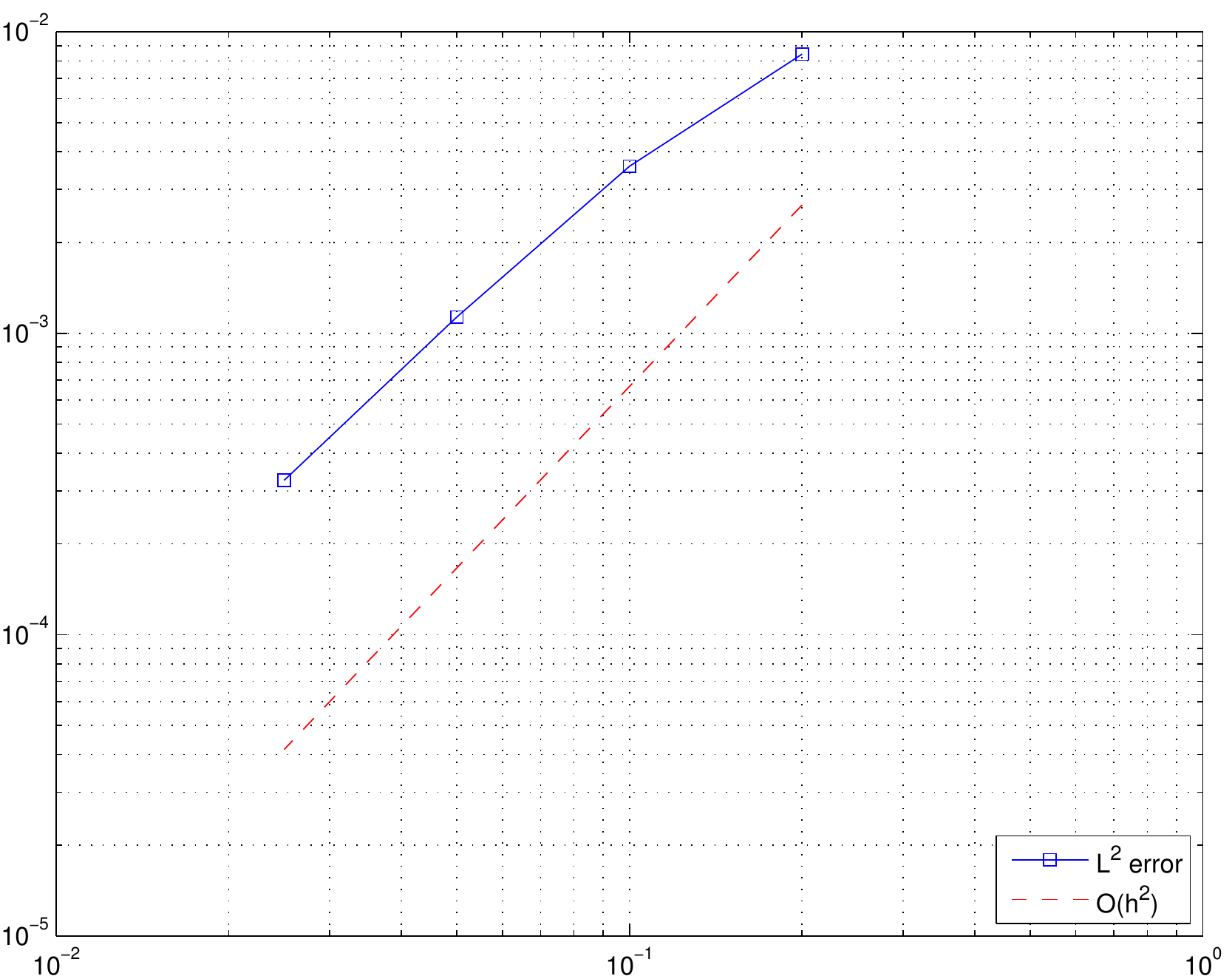}\hfill{}

\hfill{}\includegraphics[width=0.4\textwidth]{./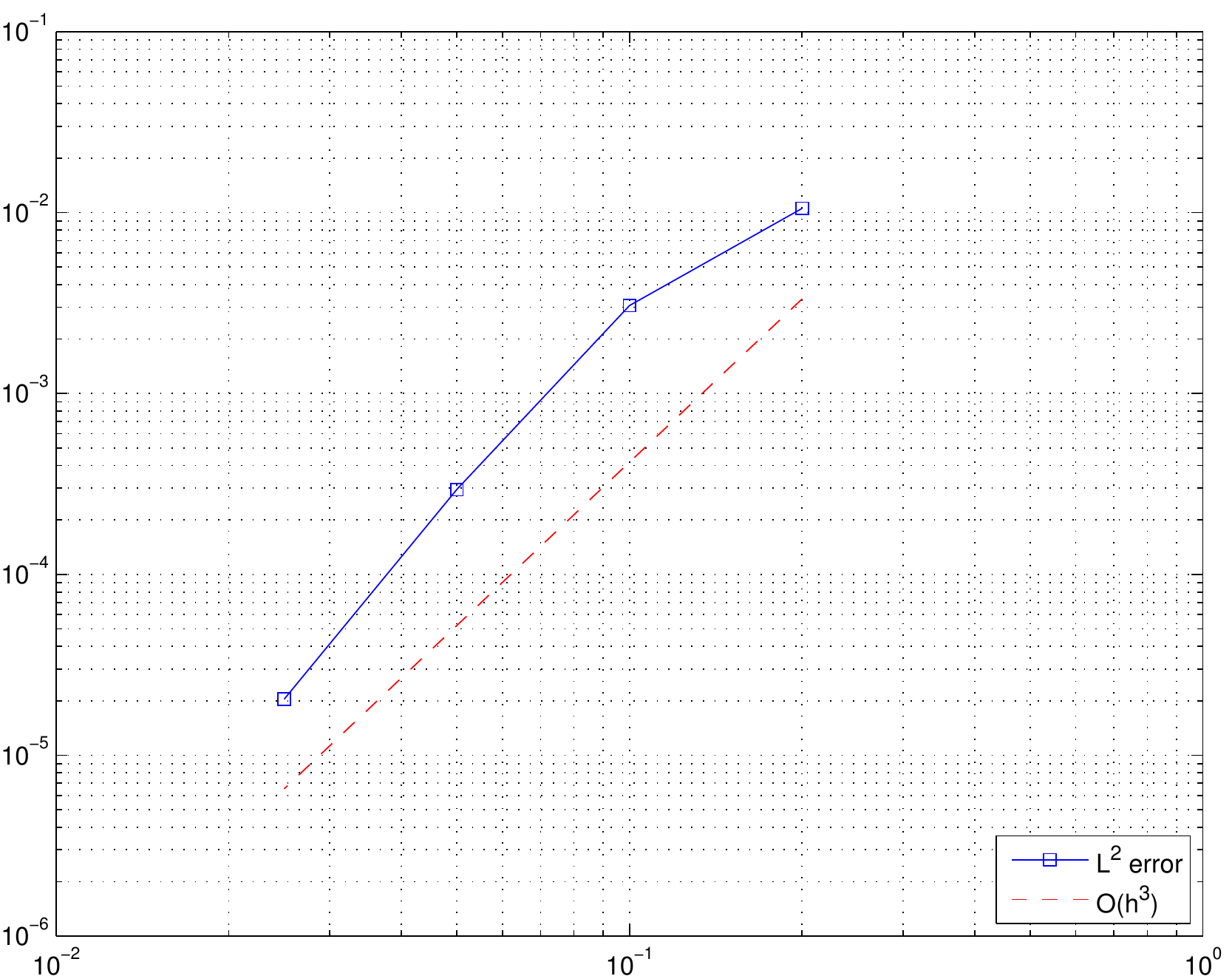}
\hfill{}\includegraphics[width=0.4\textwidth]{./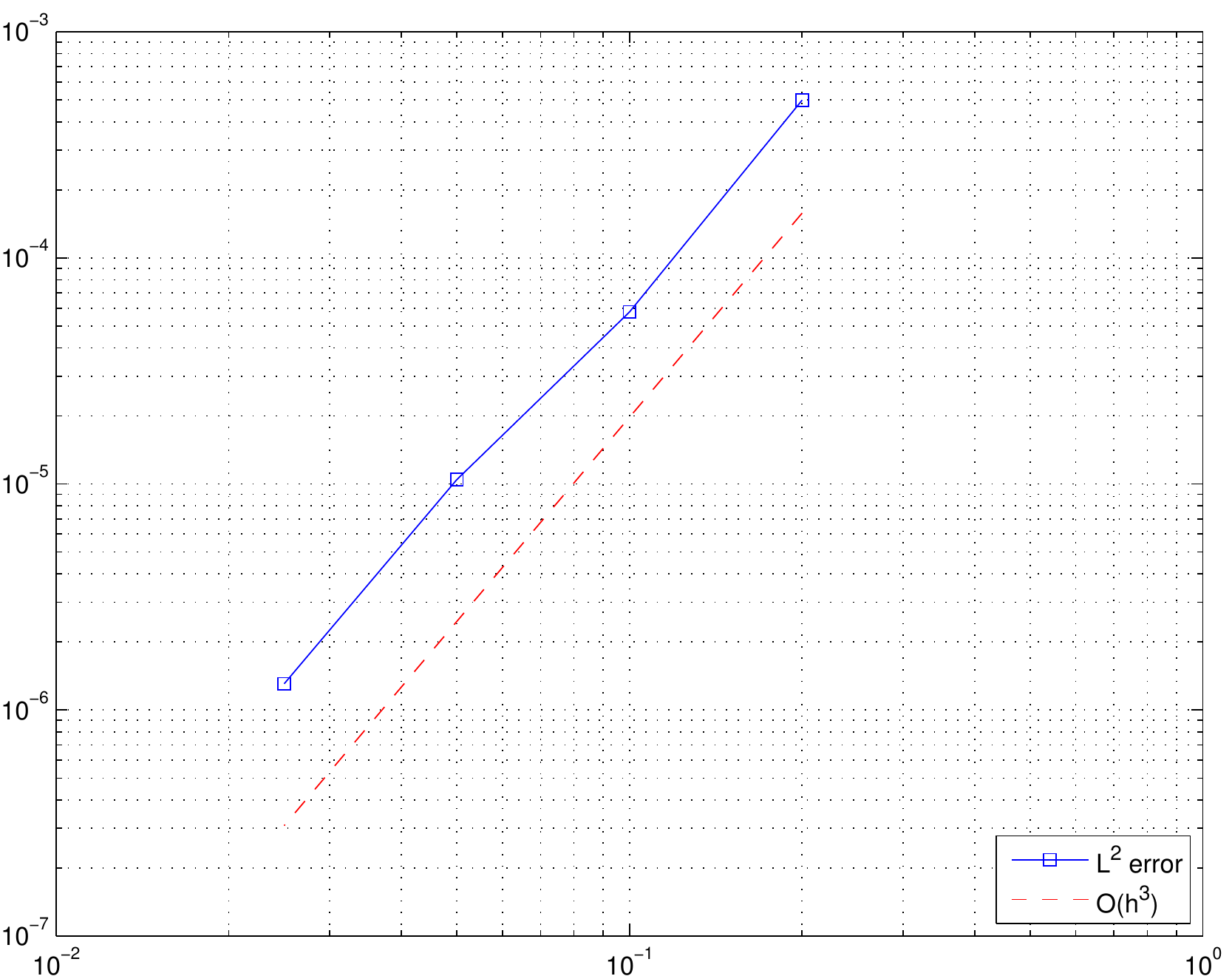}\hfill{}

\hfill{}\includegraphics[width=0.4\textwidth]{./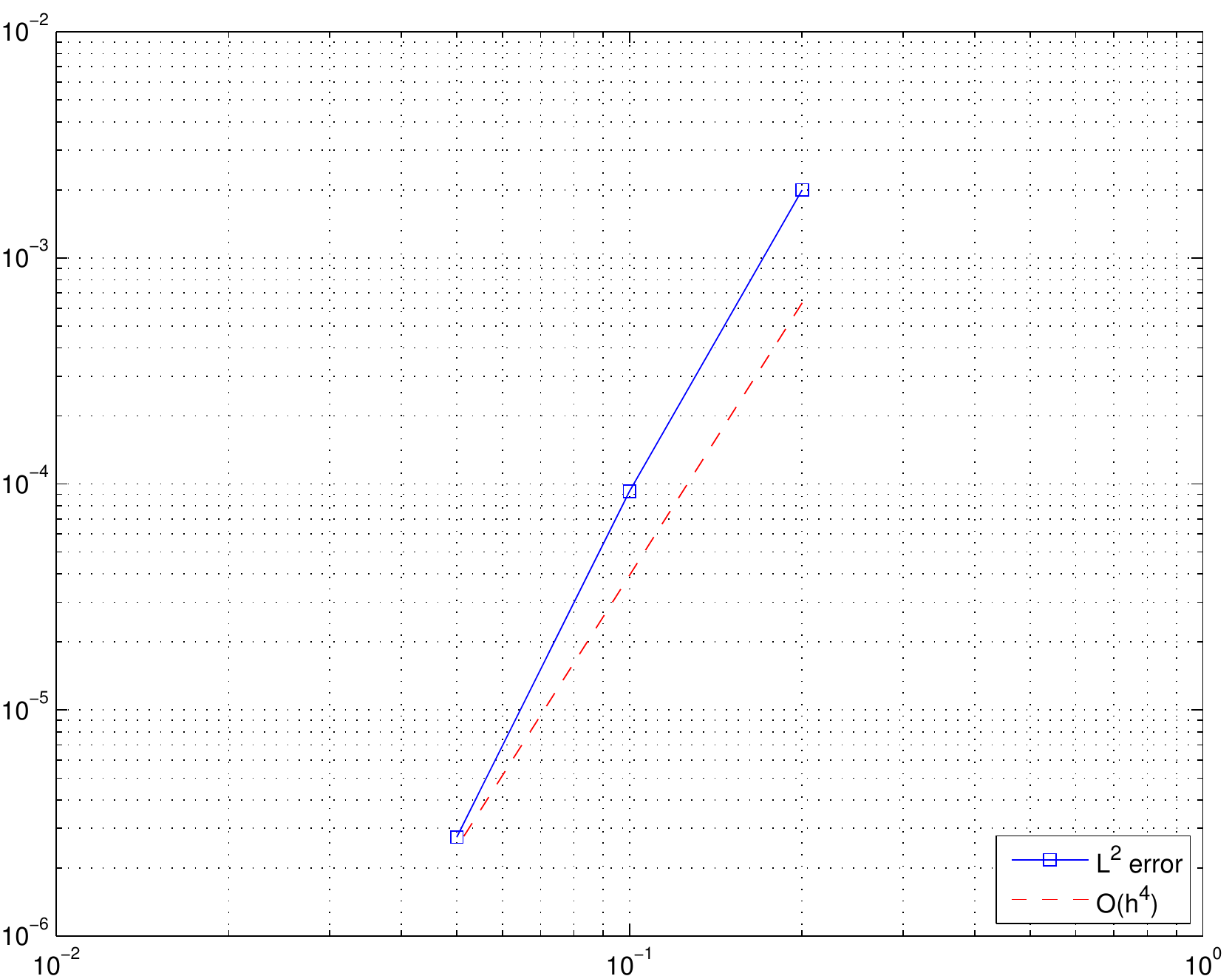}
\hfill{}\includegraphics[width=0.4\textwidth]{./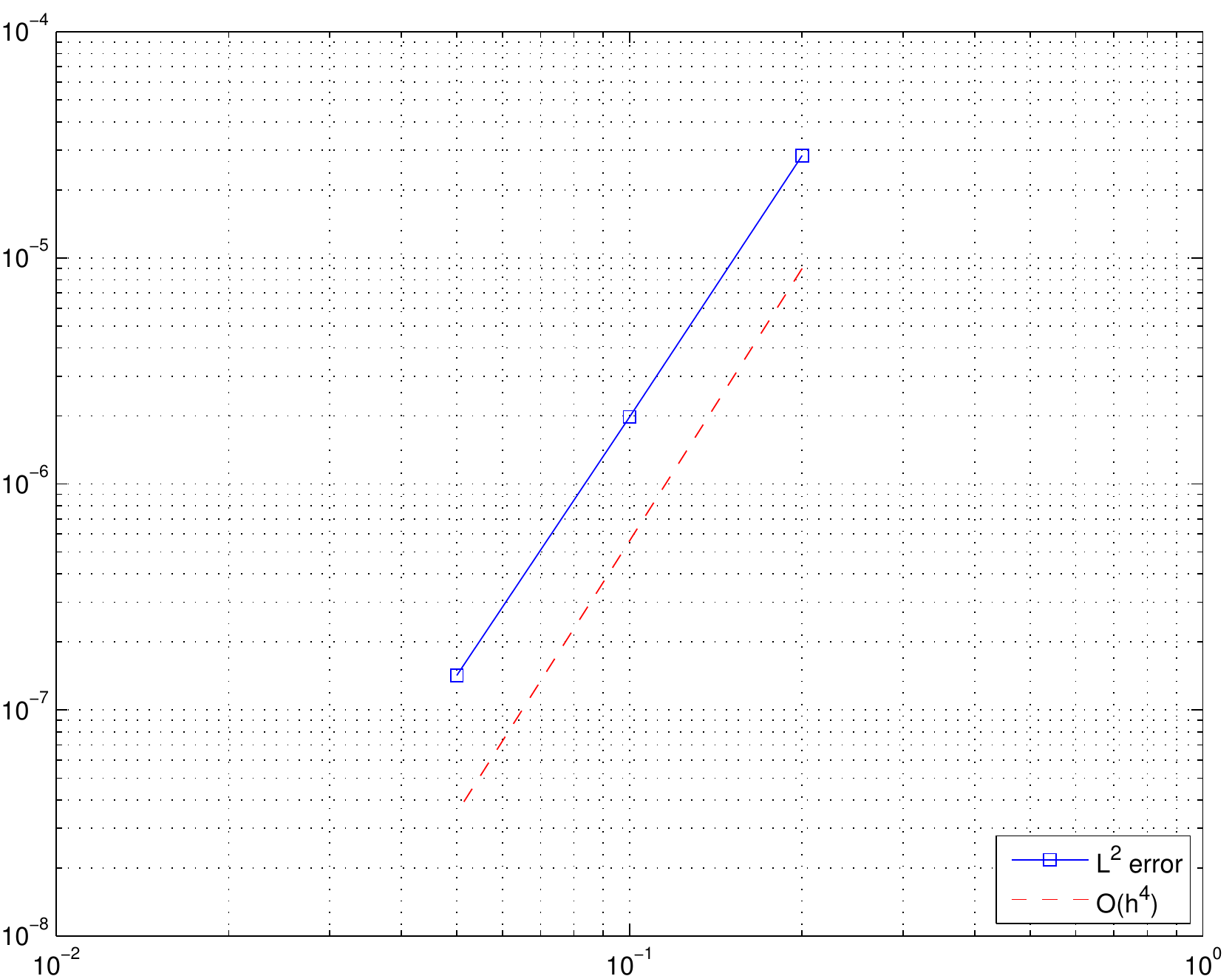}\hfill{}

\caption{Convergence history of $u_{h}$ produced by LS-weak at the subdoamin $(0,0.9)\times(0,0.9)$
away from the boundary layer. Left--right: $\epsilon=10^{-2}$,$\epsilon=10^{-9}$. 
Top--Bottom: $P1$--$P3$.}
\label{bdry_test1}
\end{figure}

\section{Extension to transportation reaction problems}
We present first order least squares method for the transportation reaction problems
\begin{subequations}
\label{transport_eqs}
\begin{align}
\boldsymbol{\beta}\cdot \nabla u + cu = &\; f  \quad \text{ in $\Omega$, }
\\
u = &\; g \quad \text{ on $\Gamma^{-}=\{x\in\partial \Omega : \boldsymbol{\beta}\cdot\boldsymbol{n}(x)<0\}$. }
\end{align}
\end{subequations}

The first order least squares method is to find $u_{h}\in W_{h}$ satisfying
\begin{align}
\label{ls_formulation_transport}
& \left( \boldsymbol{\beta}\cdot \nabla u_{h} + c u_{h}, 
\boldsymbol{\beta}\cdot \nabla w + c w \right)_{\Omega}\\
\nonumber
& \quad +\Sigma_{F\in \mathcal{E}_h^{\partial}} h_{F}^{-1}
\langle \max (-\boldsymbol{\beta}\cdot \boldsymbol{n}(x),0) u_{h}, w\rangle_{F}\\
\nonumber
 = & (f, \boldsymbol{\beta}\cdot \nabla w + c w)_{\Omega}\\
 \nonumber
&\quad +\Sigma_{F\in \mathcal{E}_h^{\partial}} h_{F}^{-1}
\langle \max (-\boldsymbol{\beta}\cdot \boldsymbol{n}(x),0) g, w\rangle_{F},
\quad \forall w\in \mathcal{U}_{h}.
\end{align}
Here, $\mathcal{U}_{h} = \{w|_{K}\in H^{1}(\Omega): w\in P_{k+1}(K),\quad\forall K\in\mathcal{T}_{h} \}$.

Obviously, (\ref{ls_formulation_transport}) is a special case of first order least squares method (\ref{ls_formulation}) 
when the diffusion coefficient is zero. By using similar argument in section~\ref{subsec:rewrite}, we have the following Theorem~\ref{Thm_transport}.
\begin{theorem}
\label{Thm_transport}
Let $u$ be the solution of transport reaction equation (\ref{transport_eqs}) and $u_{h}$ the 
numerical solution of first order least squares method (\ref{ls_formulation_transport}).
\begin{align}
\label{convergence_rate_global_transport}
\Vert u - u_{h}\Vert_{L^{2}(\Omega)} + \Vert \boldsymbol{\beta}\cdot \nabla (u - u_{h}) \Vert_{L^{2}(\Omega)} 
\leq C h^{k+1} \Vert u\Vert_{H^{k+2}(\Omega)}.
\end{align} 
\end{theorem}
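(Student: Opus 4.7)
The plan is to recast (\ref{ls_formulation_transport}) in the DPG framework of Section~\ref{sec:abstract} (the same way (\ref{ls_formulation}) was handled in Section~\ref{subsec:rewrite}) and then follow the roadmap of Theorem~\ref{MainTh1}. Since the transportation problem corresponds to $\epsilon=0$ and no auxiliary flux variable $\boldsymbol{q}$ is introduced, the trial space reduces to $\mathcal{U}=H^1(\Omega)$, the test space becomes $\mathcal{V}=L^2(\Omega)\times L^2(\partial\Omega)$ with inner product
\begin{align*}
((v,\mu),(\delta v,\delta\mu))_{\mathcal{V}}
=(v,\delta v)_{\Omega}+\Sigma_{F\in\mathcal{E}_h^{\partial}}h_F^{-1}\langle\max(-\boldsymbol{\beta}\cdot\boldsymbol{n},0)\mu,\delta\mu\rangle_F,
\end{align*}
and the associated bi-linear form is $b(w,(v,\mu))=(\boldsymbol{\beta}\cdot\nabla w+cw,v)_{\Omega}+\Sigma_{F}h_F^{-1}\langle\max(-\boldsymbol{\beta}\cdot\boldsymbol{n},0)w,\mu\rangle_F$. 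Then (\ref{ls_formulation_transport}) is precisely the DPG method associated with $(b,T)$, where $T$ is defined via (\ref{def_T_op}).

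Next I would establish the analog of Lemma~\ref{lemma_numerical_stability}: for any $e_u\in\mathcal{U}_h$,
\begin{align*}
\Vert e_u\Vert_{L^2(\Omega)}^2+\Vert e_u\Vert_{L^2(\partial\Omega,\boldsymbol{\beta})}^2+\Vert\boldsymbol{\beta}\cdot\nabla e_u\Vert_{L^2(\Omega)}^2
\leq C\Bigl(\sup_{0\ne(v,\mu)\in T(\mathcal{U}_h)}\frac{b(e_u,(v,\mu))}{\Vert(v,\mu)\Vert_{\mathcal{V}}}\Bigr)^2.
\end{align*}
For the $L^2$ and boundary contributions I plug the test functions $v=(e^{-\psi}+\kappa)e_u$, $\mu=(e^{-\psi}+\kappa)e_u$ into $b$, integrate $\boldsymbol{\beta}\cdot\nabla e_u$ by parts, and invoke the assumptions (\ref{AS1})–(\ref{AS2}) together with (\ref{abstract_key_equation}) to pass from the sup over $T(\mathcal{U}_h)$ to the sup over all of $\mathcal{V}$; the exponential weight is the same device used in Lemma~\ref{lemma_numerical_stability}. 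The $\Vert\boldsymbol{\beta}\cdot\nabla e_u\Vert_{L^2}^2$ part is then immediate: taking $(v,\mu)=(\boldsymbol{\beta}\cdot\nabla e_u+ce_u,e_u|_{\partial\Omega})=T e_u$ in the abstract supremum directly produces this norm, with no inverse inequality and no mesh-size restriction (which is why the hypothesis $\epsilon^{1/2}\le h_K$ needed in Theorem~\ref{MainTh2} disappears here).

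Once stability is in hand, the norm equivalence (the exact analog of Lemma~\ref{lemma_norm_equivalent}) follows from $T$'s explicit form, and Galerkin orthogonality with $u$ inserted into (\ref{ls_formulation_transport}) yields the error equation satisfied by $e_u:=\Pi_W u-u_h$. Bounding $e_u$ by the interpolation error in the $\mathcal{U}$-norm and applying standard approximation estimates
\begin{align*}
\Vert u-\Pi_W u\Vert_{L^2(\Omega)}+h\Vert\boldsymbol{\beta}\cdot\nabla(u-\Pi_W u)\Vert_{L^2(\Omega)}+h^{1/2}\Vert u-\Pi_W u\Vert_{L^2(\partial\Omega,\boldsymbol{\beta})}\leq C h^{k+2}\Vert u\Vert_{H^{k+2}(\Omega)},
\end{align*}
then gives (\ref{convergence_rate_global_transport}) after a triangle inequality between $u-u_h$, $u-\Pi_W u$, and $\Pi_W u-u_h$.

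The only genuinely delicate step is the stability proof, because one must make sure that the weighted test function $(e^{-\psi}+\kappa)e_u$ actually lies in $\mathcal{V}$ (which is trivial here since no $H(\mathrm{div})$ component is present) and that the constant $\kappa$ produced by (\ref{AS1})–(\ref{AS2}) depends only on $b_0$ and $\Vert\psi\Vert_{W^{1,\infty}}$, not on $h$; these points are exactly as in Lemma~\ref{lemma_numerical_stability}, only simpler. Note also that no extra regularity on $\boldsymbol{\beta}$ or $c$ is needed for the full $h^{k+1}$ rate, because the awkward term $\epsilon^{1/2}(P_{k+1,h}\nabla\cdot\boldsymbol{q}-\nabla\cdot\boldsymbol{q},\cdots)$ that forced the refinement argument in the proof of (\ref{convergence_rate_global_refined}) vanishes identically when $\epsilon=0$.
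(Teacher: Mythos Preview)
Your proposal is correct and follows exactly the route the paper indicates (``by using similar argument in section~\ref{subsec:rewrite}''): recast (\ref{ls_formulation_transport}) as a DPG method with $\mathcal{U}=H^{1}(\Omega)$ and $\mathcal{V}=L^{2}(\Omega)\times L^{2}(\partial\Omega)$, reproduce the stability estimate of Lemma~\ref{lemma_numerical_stability} with the same weighted test function $(e^{-\psi}+\kappa)e_{u}$ (now simpler, since the $\epsilon^{1/2}\widehat{e}_{\boldsymbol{q}}$ correction in (\ref{optimal_test}) drops out), and then combine Galerkin orthogonality with interpolation bounds. Your two side remarks---that the mesh restriction $\epsilon^{1/2}\le h_{K}$ is unnecessary because $\Vert\boldsymbol{\beta}\cdot\nabla e_{u}\Vert_{L^{2}(\Omega)}$ is already controlled by $\Vert T e_{u}\Vert_{\mathcal{V}}$ without any inverse inequality, and that the extra regularity on $\boldsymbol{\beta},c$ used in (\ref{convergence_rate_global_refined}) is not needed since the offending term (\ref{ineq_refined}) vanishes when $\epsilon=0$---are both on point and explain why the statement of Theorem~\ref{Thm_transport} carries no such hypotheses.
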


\begin{remark}
The first order least squares method (\ref{ls_formulation_transport}) is the same as the method in \cite{Bochev_transport}, 
except the way of imposing boundary condition. In \cite{Bochev_transport}, the term $c-\frac{1}{2}\nabla\cdot\boldsymbol{\beta}$ 
is required to be uniformly bounded from below by a positive constant. We get rid of this restriction.
\end{remark}

{\bf Acknowledgements}. The work of Huangxin Chen was supported by the NSF of China (No. 11201394) and 
the NSF of Fujian Province (No. 2013J05016). The work of Jingzhi Li was supported by the NSF of China 
(No. 11201453 and 91130022). The work of Weifeng Qiu was supported by City University of Hong Kong 
under start-up grant (No. 7200324) and by the GRF of Hong Kong (Grant No. 9041980).

\end{document}